\documentclass[11pt]{article}

\usepackage[a4paper,margin=1in]{geometry}
\usepackage{amsmath,amssymb,amsfonts,amsthm}
\usepackage{graphicx}
\usepackage{subcaption}
\usepackage{algorithmic}
\usepackage{stmaryrd}
\usepackage{hyperref}

\let\OriginalIncludeGraphics\includegraphics
\renewcommand{\includegraphics}[2][]{%
  \IfFileExists{#2}{%
    \OriginalIncludeGraphics[#1]{#2}%
  }{%
    \fbox{%
      \parbox[c][0.28\textheight][c]{0.9\linewidth}{%
        \centering Missing figure:\\\texttt{\detokenize{#2}}%
      }%
    }%
  }%
}

\hypersetup{
  colorlinks=true,
  linkcolor=blue,
  citecolor=blue,
  urlcolor=blue
}

\newtheorem{theorem}{Theorem}[section]
\newtheorem{lemma}[theorem]{Lemma}
\newtheorem{corollary}[theorem]{Corollary}
\newtheorem{proposition}[theorem]{Proposition}

\theoremstyle{definition}
\newtheorem{definition}[theorem]{Definition}
\newtheorem{example}[theorem]{Example}

\theoremstyle{remark}
\newtheorem{remark}[theorem]{Remark}

\numberwithin{equation}{section}

\title{Symplectic Hamiltonian Direct Discontinuous Galerkin Method for Wave Propagation}

\author{
Haomiao Li\textsuperscript{1,2}
\and
Yumiao Li\textsuperscript{3}
\and
Jiaxin Wang\textsuperscript{2}
\and
Tiegang Liu\textsuperscript{2,4}
\and
Kun Wang\textsuperscript{2,4,*}
}

\date{}

\begin{document}

\maketitle

\begin{center}
\small
\textsuperscript{1}Sino-French Carbon Neutrality Research Center, Centrale Pekin,
Beihang University, Beijing 100191, China\\
\textsuperscript{2}LMIB and School of Mathematical Sciences, Beihang University,
Beijing 100191, China\\
\textsuperscript{3}Hunan Key Laboratory for Computation and Simulation in Science
and Engineering, National Center for Applied Mathematics in Hunan, Xiangtan
University, Xiangtan 411105, Hunan, PR China\\
\textsuperscript{4}International Research Center for Mathematics and
Interdisciplinary Sciences, Hangzhou International Innovation Institute of
Beihang University, Hangzhou 311115, China\\
\textsuperscript{*}Corresponding author. Email: \texttt{wangkun@buaa.edu}
\end{center}

\begin{abstract}
    This paper presents a symplectic Hamiltonian direct discontinuous Galerkin (DDG) method for approximating wave propagation problems, including the linear and semilinear wave equations. Within an auxiliary-variable-free DG framework, we prove that the symmetry of the numerical flux bilinear form is equivalent to the existence of a discrete Hamiltonian structure.  It follows that methods such as the symmetric interior penalty method and the symmetric DDG (SDDG) method admit a discrete Hamiltonian structure, whereas schemes including the Baumann--Oden, DDG, and BR2 methods do not possess this property. Exploiting this structure, we construct fully discrete symplectic schemes by combining the SDDG spatial discretization with symplectic time integrators. We further derive error estimates for the SDDG method applied to semilinear wave equations, showing the optimal convergence rate for the displacement and the suboptimal convergence rate for the velocity. Numerical experiments validate the theoretical convergence rates and demonstrate that the symplectic Hamiltonian DDG method achieves superior long-time energy conservation and accuracy. 
\end{abstract}

\noindent\textbf{Keywords.}
Wave equations, Discontinuous Galerkin methods, Hamiltonian systems, Symplectic methods

\section{Introduction}
\label{intro}
In this paper, we consider the following semi-linear wave equation
\begin{align}
\label{eq:governing_eq}
  &\partial_{tt} u(\boldsymbol{x},t)+g(u(\boldsymbol{x},t))=  \nabla\cdot( \kappa \nabla u(\boldsymbol{x},t)) +f(t), &\text{ in }  \Omega \times J.
\end{align}
where $J$ is a finite time interval and $\Omega$ is an open and bounded domain. It arise in a wide range of applications, including  plasmas, hydrodynamics, and magnetohydrodynamics \cite{cuevas2014sine}. The governing equation \eqref{eq:governing_eq} belongs to the class of Hamiltonian partial differential equations (PDEs). 
Hamiltonian systems preserve a symplectic structure on phase space. This property has motivated the development of symplectic integrators, which generate symplectic maps when applied to Hamiltonian ordinary differential equations (ODEs) \cite{feng2010symplectic}.
Owing to their ability to respect the underlying geometric structure, such methods
exhibit favourable long-time stability and near-energy conservation, and have
therefore become a standard tool for the numerical simulation of conservative
dynamical systems \cite{Tao2016}. Motivated by these successes, the concept of symplecticity has been extended from finite-dimensional ODEs to infinite-dimensional Hamiltonian PDEs. Two main approaches have been developed for this purpose, namely the multisymplectic method and the Hamiltonian method-of-lines method.

In this work, we focus on the Hamiltonian method-of-lines framework, in contrast to the multisymplectic formulation which treats space and time on an equal footing \cite{bridges2006numerical,mcdonald2016travelling,mclachlan2015multisymplectic} but can be computationally demanding. In the method-of-lines approach, the Hamiltonian PDE is first discretized in space to obtain a finite-dimensional Hamiltonian ODE system, which is then integrated in time using symplectic time integrators. This separation of space and time simplifies the numerical analysis and allows one to directly exploit mature high-order symplectic integrators with well-understood stability and accuracy properties. Within this framework, the main challenge lies in the construction of spatial discretizations that preserve the Hamiltonian structure while maintaining high-order accuracy. Various Hamiltonian-preserving spatial discretizations have been studied, including finite difference methods \cite{duncan1997sympletic} and finite element methods \cite{brink2017hamiltonian}. However, achieving both Hamiltonian structure preservation after spatial semi-discretization and high-order accuracy on complex geometries remains challenging. In this regard, discontinuous Galerkin (DG) methods provide a flexible and powerful framework due to their local formulation, geometric adaptability, and suitability for high-order approximations.

For the semi-linear wave equations, the DG discretization of the weak form of $\nabla\cdot(\kappa\nabla u)$ is essential. The primary difficulty lies in treating the discontinuity of $\kappa \nabla u$ across interfaces. This treatment determines whether the semi-discrete scheme is stable and consistent. In our problem, it is additionally required to admit a Hamiltonian structure. Existing DG discretizations of this operator can be classified into two main categories, depending on whether auxiliary variables are introduced.

The first category introduces auxiliary variables to rewrite the second-order operator as a first-order system, with LDG and HDG methods as representative examples. Xing and Chou \cite{xing2013energy} proposed an energy-conserving LDG method for wave equations combined with the leapfrog time integrator, making it an early instance of symplectic DG methods. Sánchez et al. \cite{sanchez2017symplectic} developed the first symplectic HDG method for linear wave equations, combining a Hamiltonian HDG spatial discretization with symplectic Runge–Kutta time integration, and proved discrete energy preservation and high-order accuracy. This framework was subsequently extended to electromagnetic wave equations \cite{sanchez2022symplectic} and to semi-linear wave equations \cite{sanchez2024symplectic}. However, the introduction of auxiliary variables inevitably increases the number of degrees of freedom, computational cost, and implementation complexity.

The second category, which we refer to as auxiliary-variable-free DG methods, directly constructs numerical fluxes using solution values and their derivatives on both sides of each interface, without introducing auxiliary variables. Grote et al. \cite{grote2006discontinuous} proposed an SIPDG method combined with Newmark time stepping for linear wave equations and established spatial optimal convergence rates both theoretically and numerically. He and Yang \cite{he2019symplectic} applied SIPDG with a third-order symplectic time integrator to seismic scalar wave equations. However, neither of these works analyses the discrete Hamiltonian structure or symplectic properties of the resulting scheme. 

Within this auxiliary-variable-free class, the DDG method \cite{liu2009direct} stands out for its compact formulation and flexible flux design. It has been successfully applied to diffusion problems and Navier-Stokes equations \cite{carrillo2025positivity,cheng2017parallel,cheng2016direct,liu2015optimal,yin2025pnp}, but has not yet been extended to Hamiltonian PDEs. The non-symmetry of the original DDG flux limits rigorous error analysis, motivating the development of SDDG methods \cite{vidden2013new,yue2017symmetric}. 

Despite these advances, the symplectic properties of auxiliary-variable-free DG methods for Hamiltonian PDEs remain unexplored, and in particular the relationship between numerical flux design and the existence of a discrete Hamiltonian structure has not been systematically investigated. 
These observations motivate the present work. The main contributions of this work are summarized as follows:
\begin{itemize}
\item Within an auxiliary-variable-free DG framework, we construct a numerical
flux bilinear form and a discrete energy, and prove that the symmetry of
the numerical flux bilinear form is equivalent to the existence of a discrete
Hamiltonian structure. Combined with symplectic time integrators, this leads
to a fully discrete symplectic scheme.

\item Rigorous error estimates are derived for the SDDG method applied to semi-
linear wave equations, including optimal convergence for the displacement
and suboptimal convergence for the velocity, together with admissible
parameter ranges.

\item Numerical experiments for linear and sine-Gordon wave equations are
performed to validate the proposed method, illustrating its long-time energy
behavior and accuracy, its ability to reproduce two-dimensional soliton
reflection and cloning, and the role of the discrete Hamiltonian structure in
controlling dispersion errors.
\end{itemize}
The rest of the paper is organized as follows. Section~\ref{sec:hamil_ddg_scheme} introduces the Hamiltonian DDG framework, including the DG formulation and the construction of the discrete Hamiltonian.
Section~\ref{sec:tempo-disc} describes the fully discrete schemes based on both symplectic and non-symplectic time integrators.
Section~\ref{sec:error-analysis} is devoted to the error analysis of the SDDG discretization for semi-linear wave equations.
Section~5 presents numerical experiments and concluding remarks are given in Section~\ref{sec:conclusion}.

\section{The Hamiltonian DDG method}\label{sec:hamil_ddg_scheme}
Throughout this work, we assume that the scalar coefficient $\kappa =\kappa(\boldsymbol{x}) \in L^\infty(\Omega)$  is uniformly positive, namely $\kappa_{\min} \leq \kappa \leq \kappa_{\max}$ almost everywhere in $\Omega$, with constants $0<\kappa_{\min}\leq \kappa_{\max}<\infty$. Moreover, the source term satisfies $f(t)\in L^2(\Omega)$ for all $t\geq 0$. $g(u)$ is a real-valued functional admitting a primitive \(G\), with notable example $g(u) = \sin(u)$
(sine-Gordon equation). In the special case $g(u)=0$, the equation reduces to the linear wave equation. We rewrite \eqref{eq:governing_eq} as the following system
\begin{equation}
\label{eq:system}
\begin{cases}
\partial_t u(\boldsymbol{x},t) = v(\boldsymbol{x},t),\\
\partial_t v(\boldsymbol{x},t) = \nabla \cdot (\kappa \nabla u)(\boldsymbol{x},t) + f(t) - g\big(u(\boldsymbol{x},t)\big),
\end{cases}\quad \text{for }(\boldsymbol{x},t)\in \Omega\times J, 
\end{equation}
The initial conditions are prescribed as
\begin{equation}\label{eq:initial_condition}
u(\boldsymbol{x},0)=u_0(\boldsymbol{x}),\quad v(\boldsymbol{x},0)=v_0(\boldsymbol{x}), \quad u_0,v_0\in L^2(\Omega).
\end{equation}
We consider periodic boundary conditions as well as the following general boundary condition:
\begin{equation}
  \label{eq:boundary_def}
  \alpha \kappa\nabla u(t) \cdot \boldsymbol{n}+ \beta u(t) = b(t),\text{ on } \Gamma :=\Gamma,
\end{equation}
where the coefficients $\alpha, \beta \in \mathbb{R}$ may vary along the boundary, and the boundary data satisfies $b(t)\in H^{1/2}(\Gamma)$ for all $t\geq0$.
It recovers Dirichlet boundary conditions ($\alpha = 0$, 
$\beta \neq 0$), Neumann boundary conditions ($\alpha \neq 0$, $\beta = 0$), 
and Robin boundary conditions ($\alpha, \beta \neq 0$).

\begin{proposition}[Continuous Hamiltonian formulations]
\label {prop:continuous_hamiltonian_form}
Let \(G\) be a primitive of \(g\), namely \(G'(u)=g(u)\). At each fixed
\(t\in J\), define
\begin{equation}
\label{eq:continuous_bulk_hamiltonian}
\mathcal H_0(u,v,t)
=
\int_\Omega        
\left(
\frac12 v^2
+
\frac12\kappa |\nabla u|^2
+
G(u)
-
fu
\right)\,\mathrm{d}\boldsymbol{x}.
\end{equation}
Under periodic boundary conditions, the Hamiltonian is \(\mathcal H_P=\mathcal H_0\).
Under the Dirichlet condition \(u=b/\beta\) on \(\Gamma\), the Hamiltonian is again
\begin{equation}
\label{eq:continuous_dirichlet_hamiltonian}
\mathcal H_D=\mathcal H_0
\quad
\text{on }
\{u:\; u|_\Gamma=b/\beta\}.
\end{equation}
For the Neumann/Robin boundary condition \(\alpha\neq0\), the Hamiltonian is
\begin{equation}
\label{eq:continuous_general_hamiltonian}
\mathcal H_B(u,v,t)
=
\mathcal H_0(u,v,t)
+
\int_\Gamma
\left(
\frac{\beta}{2\alpha}u^2
-
\frac{b}{\alpha}u
\right)\,\mathrm{d}s .
\end{equation}
In each case, the system \eqref{eq:system} admits the Hamiltonian form
\begin{equation}
\label{eq:continuous_hamiltonian_system}
\partial_t u=\frac{\delta\mathcal H}{\delta v},
\qquad
\partial_t v=-\frac{\delta\mathcal H}{\delta u}.
\end{equation}
\end{proposition}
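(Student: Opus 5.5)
The plan is to compute the first variation of each Hamiltonian directly in an arbitrary admissible direction, read off the variational derivatives, and compare them with the right-hand sides of \eqref{eq:system}. The Hamiltonians are understood as functionals on the natural spaces: $u\in H^1(\Omega)$ (with the affine constraint $u|_\Gamma=b/\beta$ in the Dirichlet case) and $v\in L^2(\Omega)$. The variational derivative $\delta\mathcal H/\delta u$ is the $L^2(\Omega)$ Riesz representative of the Gâteaux derivative; the corresponding statement for $v$ is analogous. Throughout, $t$ is frozen, so $f$ and $b$ are just given data.

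The $v$-part is immediate and common to all cases. Since $v$ enters only through $\int_\Omega \tfrac12 v^2$, we get $\frac{d}{d\varepsilon}\mathcal H(u,v+\varepsilon w)|_{\varepsilon=0}=\int_\Omega v w$. Hence $\delta\mathcal H/\delta v=v$, which gives the first equation $\partial_t u = v$.

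For the $u$-part, take $\mathcal H_0$ and a perturbation $u+\varepsilon\phi$. Differentiating gives
\[
\frac{d}{d\varepsilon}\mathcal H_0\Big|_{\varepsilon=0}=\int_\Omega\big(\kappa\nabla u\cdot\nabla\phi+g(u)\phi-f\phi\big)\,\mathrm d\boldsymbol x,
\]
using $G'=g$. For $u$ regular enough that $\nabla\cdot(\kappa\nabla u)\in L^2$, Green's formula turns this into
\[
\int_\Omega\big(-\nabla\cdot(\kappa\nabla u)+g(u)-f\big)\phi\,\mathrm d\boldsymbol x+\int_\Gamma \kappa\nabla u\cdot\boldsymbol n\,\phi\,\mathrm ds.
\]
The three cases differ only in how the boundary integral is handled.

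\emph{Periodic case.} Contributions from opposite faces cancel, since both $\phi$ and $\kappa\nabla u\cdot\boldsymbol n$ are periodic and the outward normals are opposite.

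\emph{Dirichlet case.} Admissible variations must preserve the constraint, so they satisfy $\phi|_\Gamma=0$. The functional derivative is taken on this tangent space, which is dense in $L^2(\Omega)$, so the representative is still well defined.

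\emph{Neumann/Robin case ($\alpha\neq0$).} Here $\phi$ is free on $\Gamma$. The extra boundary term in $\mathcal H_B$ contributes $\int_\Gamma(\frac{\beta}{\alpha}u-\frac{b}{\alpha})\phi\,\mathrm ds$. Adding this to the Green boundary term gives $\frac1\alpha\int_\Gamma(\alpha\kappa\nabla u\cdot\boldsymbol n+\beta u-b)\phi\,\mathrm ds$, which vanishes by \eqref{eq:boundary_def}.

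In every case, then, $\delta\mathcal H/\delta u=-\nabla\cdot(\kappa\nabla u)+g(u)-f$. Consequently $-\delta\mathcal H/\delta u$ is exactly the right-hand side of the second equation in \eqref{eq:system}, which establishes \eqref{eq:continuous_hamiltonian_system}.

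The main subtlety is the Neumann/Robin case and the correct interpretation of the variational derivative there. One has to argue that the boundary condition is a natural condition: it is what makes the boundary part of the variation vanish, so that the derivative is representable by an $L^2(\Omega)$ function with no boundary distribution. Equivalently, one could state that stationarity with respect to boundary variations reproduces \eqref{eq:boundary_def}. I would present it the first way, restricting to solutions satisfying the boundary condition, and note that the $\frac{\beta}{2\alpha}u^2-\frac{b}{\alpha}u$ term is chosen precisely so that its derivative matches the Robin data.
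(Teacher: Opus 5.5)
Your proposal is correct and follows the same route as the paper. You compute the G\^ateaux derivatives, integrate by parts to isolate the boundary flux $\int_\Gamma \kappa\nabla u\cdot\boldsymbol n\,\phi\,\mathrm ds$, and remove it by periodicity, by $\phi|_\Gamma=0$ in the Dirichlet case, or for $\alpha\neq0$ by adding the variation of the extra boundary term and invoking \eqref{eq:boundary_def}. Your extra remarks only make explicit what the paper leaves implicit: the admissible Dirichlet variations are dense, and in the Neumann/Robin case the $L^2$ representation holds only on states satisfying the boundary condition.
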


\begin{proof}
For a functional \(\mathcal H\), \(D_u\mathcal H(u,v,t)[\eta]\) denotes the
Gâteaux derivative of \(\mathcal H\) with respect to \(u\) in the direction
\(\eta\):
\begin{equation}
D_u\mathcal H(u,v,t)[\eta]
:=
\left.
\frac{\mathrm{d}}{\mathrm{d}\varepsilon}
\mathcal H(u+\varepsilon\eta,v,t)
\right|_{\varepsilon=0}=\int_\Omega \frac{\delta \mathcal{H}}{\delta u} \eta \mathrm{d}\boldsymbol{x}.
\end{equation}
The variational derivative \(\delta\mathcal H/\delta u\) is defined by the
\(L^2(\Omega)\) representation of this directional derivative. It follows immediately that
\begin{equation}
\label{eq:variation_v}
D_v\mathcal H[\xi]
=
\int_\Omega v\xi\,\mathrm{d}\boldsymbol{x},
\end{equation}
and hence \(\delta\mathcal H/\delta v=v\). For the bulk part,
\begin{align}
D_u\mathcal H_0[\eta]
&=
\int_\Omega
\left(
\kappa\nabla u\cdot\nabla\eta
+
g(u)\eta
-
f\eta
\right)\,d\boldsymbol{x}
\notag\\
&=
\int_\Omega
\left(
-\nabla\cdot(\kappa\nabla u)
+
g(u)
-
f
\right)\eta\,\mathrm{d}\boldsymbol{x}
+
\int_\Gamma
\kappa\nabla u\cdot\boldsymbol n\,\eta\,\mathrm{d}s .
\label{eq:variation_bulk}
\end{align}
For periodic boundary conditions, the last term in \eqref{eq:variation_bulk}
cancels by periodicity. For Dirichlet condition, the
admissible variations satisfy \(\eta|_\Gamma=0\), so the boundary term also
vanishes.

It remains to consider \(\alpha\neq 0\). The boundary part in
\eqref{eq:continuous_general_hamiltonian} gives
\begin{equation}
\label{eq:variation_boundary}
D_u
\int_\Gamma
\left(
\frac{\beta}{2\alpha}u^2
-
\frac{b}{\alpha}u
\right)\,\mathrm{d}s[\eta]
=
\int_\Gamma
\left(
\frac{\beta}{\alpha}u
-
\frac{b}{\alpha}
\right)\eta\,\mathrm{d}s .
\end{equation}
Combining \eqref{eq:variation_bulk} and \eqref{eq:variation_boundary} yields
\begin{align}
D_u\mathcal H_B[\eta]
&=
\int_\Omega
\left(
-\nabla\cdot(\kappa\nabla u)
+
g(u)
-
f
\right)\eta\,d\boldsymbol{x}
\notag\\
&\quad
+
\int_\Gamma
\left(
\kappa\nabla u\cdot\boldsymbol n
+
\frac{\beta}{\alpha}u
-
\frac{b}{\alpha}
\right)\eta\,ds .
\label{eq:variation_general}
\end{align}
The boundary integral in \eqref{eq:variation_general} vanishes precisely
under the boundary condition \eqref{eq:boundary_def}. Therefore, in all
cases,
\begin{equation}
\label{eq:continuous_variational_derivatives}
\frac{\delta\mathcal H}{\delta u}
=
-\nabla\cdot(\kappa\nabla u)+g(u)-f,
\qquad
\frac{\delta\mathcal H}{\delta v}=v .
\end{equation}
Substituting \eqref{eq:continuous_variational_derivatives} into
\eqref{eq:continuous_hamiltonian_system} gives \eqref{eq:system}.
\end{proof}
\subsection{Mesh notation}
\label{sec:Notation}
We now introduce the notation of spatial discretization used throughout this
paper. Let $\Omega \subset \mathbb{R}^d$ ($d\geq 1$) be a bounded Lipschitz
domain. The spatial domain is partitioned into a conforming mesh
$\Omega_h = \bigcup_{K\in\Omega_h} K$, where each element $K$ is a polytope
(interval in $d=1$, quadrilateral or triangle in $d=2$). For each
element $K\in\Omega_h$, we denote its diameter by $h_K := \mathrm{diam}(K)$
and define the global mesh size $
   h := \max_{K\in\Omega_h} h_K$, which are used in the
approximation estimates.  

The set of element interfaces is denoted by
$\mathcal{F}_h = \mathcal{F}_h^{\mathcal{I}} \cup \mathcal{F}_h^{\mathcal{B}}$,
where $\mathcal{F}_h^{\mathcal{I}}$ collects the interior faces shared by
two neighbouring elements and $\mathcal{F}_h^{\mathcal{B}}$ collects the faces
lying on $\Gamma$. Under periodic boundary conditions, opposite
boundary faces are identified and treated as interior faces; in that case
$\mathcal{F}_h^{\mathcal{B}} = \emptyset$ after identification.

We also define a face-normal scale
\(\tilde{h}\), which will appear in the face terms of DG methods.  Let
\(\boldsymbol{x}_K\) and \(\boldsymbol{x}_F\) be the barycentres of \(K\)
and \(F\), respectively.  For an interior face
\(F=\partial K\cap\partial K'\), with \(\boldsymbol n_{K,F}\) the outward
unit normal of \(K\) on \(F\), set
\[
   \tilde{h}|_F :=
   \left|(\boldsymbol{x}_{K'}-\boldsymbol{x}_{K})
          \cdot \boldsymbol n_{K,F}\right|.
\]
For a boundary face \(F\subset\partial K\cap\Gamma\), set
\[
   \tilde{h}|_F :=
   \left|(\boldsymbol{x}_{F}-\boldsymbol{x}_{K})
          \cdot \boldsymbol n_{K,F}\right|.
\]
For periodic faces, the same definition is used after identifying opposite
faces and replacing \(\boldsymbol{x}_{K'}-\boldsymbol{x}_{K}\) by the
corresponding periodic displacement.  On a rectangular grid,
\(\tilde{h}|_F\) is the cell width normal to an interior face and one half
of that width on a boundary face.

We assume throughout that the mesh family is shape-regular and quasi-uniform,
and that \(\tilde{h}\) is uniformly comparable with \(h\): there exist
mesh-independent constants \(\gamma_{\mathrm{mesh}}\),
\(c_{\tilde{h}}\), and \(C_{\tilde{h}}\) such that
\[
   h \le \gamma_{\mathrm{mesh}} h_K,\qquad \forall K\in\Omega_h,
   \qquad
   c_{\tilde{h}} h
   \le \tilde{h}|_F
   \le C_{\tilde{h}} h,
   \qquad \forall F\in\mathcal F_h,
\]

The coefficient \(\tilde{\kappa}\) is defined, for \(\boldsymbol{x}\in F\), by
\[
   \tilde{\kappa}|_F(\boldsymbol{x}) :=
   \begin{cases}
      \max\{\kappa|_K(\boldsymbol{x}),\kappa|_{K'}(\boldsymbol{x})\}, & F\in\mathcal{F}_h^{\mathcal{I}},\quad F=\partial K\cap\partial K',\\
      \kappa|_K(\boldsymbol{x}), & F\in\mathcal{F}_h^{\mathcal{B}},\quad F=\partial K\cap\Gamma.
   \end{cases}
\]

For any element $K\in\Omega_h$, the local $L^2$ inner products are
\[
   (u,w)_K := \int_K u\,w \,\mathrm{d}\boldsymbol{x},
   \qquad
   \langle u,w\rangle_{\partial K}
       := \int_{\partial K} u\,w \,\mathrm{d}s.
\]
For any face $F\in\mathcal{F}_h$,
\[
   \langle u,w\rangle_F := \int_F u\,w \,\mathrm{d}s.
\]

Mesh-wide inner products are defined by summation over elements and faces:
\[
   (u,w)_{\Omega_h} := \sum_{K\in\Omega_h} (u,w)_K,
   \quad
   \langle u,w\rangle_{\partial\Omega_h}
       := \sum_{K\in\Omega_h} \langle u,w\rangle_{\partial K},
   \quad
   \langle \cdot,\cdot\rangle_{\mathcal{F}_h}
       := \sum_{F\in\mathcal{F}_h^{\mathcal{I}}} \langle \cdot,\cdot\rangle_F.
\]

For an approximation order $k\geq 1$, we introduce the local polynomial
space
\[
   S^k(K) :=
   \begin{cases}
      \mathbb{P}_k(K), & K \text{ is an interval or a triangle},\\
      \mathbb{Q}_k(K), & K \text{ is a quadrilateral},
   \end{cases}
\]
where $\mathbb{P}_k(K)$ denotes polynomials of total degree at most $k$ on
$K$, and $\mathbb{Q}_k(K)$ denotes polynomials of degree at most $k$ in
each coordinate direction on $K$. Our numerical methods seek approximate
solutions $u_h$ and $v_h$ in the discontinuous finite element space
\begin{equation}\label{eq:Wh}
   W_h := \bigl\{\, w \in L^2(\Omega) \;:\;
                  w|_K \in S^k(K),\ \forall K\in\Omega_h \,\bigr\}.
\end{equation}

Because functions in $W_h$ are generally discontinuous across interfaces,
we recall the standard DG average and jump operators. Let
$F\in\mathcal{F}_h^{\mathcal{I}}$ be an interior face shared by two
elements $K^+$ and $K^-$, and let $\boldsymbol{n}^{\pm}$ denote the unit
inward normal vectors of $K^{\pm}$ on $F$, so that
$\boldsymbol{n}^+ = -\boldsymbol{n}^-$. Denoting by $w^{\pm}$ the traces
of a scalar function $w$ from $K^{\pm}$, we define
\[
   \{w\} := \tfrac{1}{2}\bigl(w^+ + w^-\bigr),
   \qquad
   \llbracket w \rrbracket
       := w^+\boldsymbol{n}^+ + w^-\boldsymbol{n}^-.
\]
For a vector-valued function $\boldsymbol{q}$ with traces
$\boldsymbol{q}^{\pm}$, we set
\[
   \{\boldsymbol{q}\}
       := \tfrac{1}{2}\bigl(\boldsymbol{q}^+ + \boldsymbol{q}^-\bigr),
   \qquad
   \llbracket \boldsymbol{q} \rrbracket
       := \boldsymbol{q}^+\!\cdot\boldsymbol{n}^+
        + \boldsymbol{q}^-\!\cdot\boldsymbol{n}^-.
\]
For a matrix-valued function \(\boldsymbol{\tau}\) with traces
\(\boldsymbol{\tau}^{\pm}\), we set
\[
   \{\boldsymbol{\tau}\}
       := \tfrac{1}{2}\bigl(\boldsymbol{\tau}^+ + \boldsymbol{\tau}^-\bigr),
   \qquad
   \llbracket \boldsymbol{\tau} \rrbracket
       := \boldsymbol{\tau}^+\boldsymbol{n}^+ + \boldsymbol{\tau}^-\boldsymbol{n}^-.
\]
Hence $\llbracket w \rrbracket,\llbracket \boldsymbol{\tau} \rrbracket$  are vectors while
$\llbracket \boldsymbol{q} \rrbracket$ is a scalar. The standard DG identity then reads
\begin{equation}\label{eq:DGidentity}
   \langle \boldsymbol{q}\cdot\boldsymbol{n}, w\rangle_{\partial\Omega_h}
   =- \langle \{\boldsymbol{q}\}, \llbracket w \rrbracket\rangle_{\mathcal{F}_h^{\mathcal{I}}}
   - \langle \llbracket \boldsymbol{q} \rrbracket, \{w\}\rangle_{\mathcal{F}_h^{\mathcal{I}}}+\langle \boldsymbol{q}\cdot \boldsymbol{n},w\rangle_{\Gamma},
\end{equation}
valid for any sufficiently regular $w$ and $\boldsymbol{q}$ on
$\Omega_h$, where $\boldsymbol{n}$ denotes the outward normal on each
element boundary. Under periodic boundary conditions, we have
\begin{equation}\label{eq:DGidentity_periodic}
   \langle \boldsymbol{q}\cdot\boldsymbol{n}, w\rangle_{\partial\Omega_h}
   =- \langle \{\boldsymbol{q}\}, \llbracket w \rrbracket\rangle_{\mathcal{F}_h}
   - \langle \llbracket \boldsymbol{q} \rrbracket, \{w\}\rangle_{\mathcal{F}_h}.
\end{equation}
\subsection{DG formulation}
\label{sec:DG formulation}
We discretize system \eqref{eq:system} in space using an auxiliary-variable-free DG framework, where the second-order operator is handled in weak form via a numerical flux bilinear form. We seek $u_h, v_h \in W_h$ such that for all $w \in W_h$:
\begin{subequations}
  \label{eq:semi_discrete}
  \begin{align}
  \label{eq:semi_discrete1}
(\partial_t u_h, w)_{\Omega_h} ={}& (v_h(t), w)_{\Omega_h},  \\
\label{eq:semi_discrete2}
(\partial_t v_h, w)_{\Omega_h} ={}& \theta_h(u_h, w) 
- (\kappa \nabla u_h, \nabla w)_{\Omega_h} + (f, w)_{\Omega_h} - (g(u_h), w)_{\Omega_h},  
\end{align}
\end{subequations}
where $\theta_h(\cdot,\cdot)$ denotes a numerical flux bilinear form associated
with the DG discretization of the second-order spatial operator.
We first recall the existing DG formulations obtained from different choices of \(\theta_h\) under periodic or Dirichlet boundary conditions. In formulas that contain a penalty scale, \(\tilde{h}\) denotes the uniformly comparable face scale defined in Section~\ref{sec:Notation}; the original literature often writes this scale as a local face or element diameter. The extension to the general boundary condition will be addressed subsequently through the SDDG method in subsection \ref{sec:sddg-bc}.
\paragraph{(i) Baumann--Oden method \cite{baumann1998discontinuous}}
\begin{equation*}
    \theta_h^{\text{BO}}(u_h,w)
= 
 -\langle \{\kappa \nabla u_h\},\llbracket w\rrbracket\rangle_{\mathcal{F}_h}
+\langle \{\kappa \nabla w\},\llbracket u_h\rrbracket\rangle_{\mathcal{F}_h}.
\end{equation*}

\paragraph{(ii) SIPDG method}
This method introduces a stabilisation term penalising the jump of the variable with a positive penalty parameter $\sigma>0$ \cite{grote2006discontinuous}.
\begin{align*}
    \theta_h^{\text{SIPDG}}(u_h,w)
=- \langle \{\kappa \nabla u_h\},\llbracket w\rrbracket\rangle_{\mathcal{F}_h}
-\langle \{\kappa \nabla w\},\llbracket u_h\rrbracket\rangle_{\mathcal{F}_h}
-\sigma\langle \frac{\tilde{\kappa}}{\tilde{h}}\llbracket u_h\rrbracket,\llbracket w\rrbracket\rangle_{\mathcal{F}_h},
\end{align*}
\paragraph{(iii) BR2 method}
\begin{align*}
    \theta_h^{\text{BR2}}(u_h,w)
= 
- \langle \{\kappa \nabla u_h\},\llbracket w\rrbracket\rangle_{\mathcal{F}_h}
-\langle \{\kappa \nabla w\},\llbracket u_h\rrbracket\rangle_{\mathcal{F}_h} - 
\langle R_F(\llbracket u_h\rrbracket),
      \llbracket w\rrbracket\rangle_{\mathcal{F}_h},
\end{align*}
where the lifting operator $R_F(\llbracket u_h\rrbracket)$ ensures stability without a penalty parameter 
 \cite{bassi2005discontinuous}.  
\paragraph{(iv) DDG method}
The DDG method of Liu and Yan \cite{liu2009direct} constructs the numerical flux 
directly using jumps, averages and its high order derivates, 
\begin{equation}
\widehat{\kappa \nabla w}  = 
 \frac{\beta_0\tilde{\kappa}\llbracket w \rrbracket}{\tilde{h}} + \{ \kappa \nabla w \} + \beta_1 \tilde{\kappa} \tilde{h} \llbracket \nabla^2 w\rrbracket. 
\end{equation}
Here, $\beta_0$ and $\beta_1$ are tunable parameters and $\nabla^2$ is the hessian operator. The associated bilinear form is
\begin{equation}
    \begin{split}
      &\theta_h^{\text{DDG}}(u_h,w)
      :={} \langle \widehat{\kappa \nabla u_h}\cdot \boldsymbol{n}, w \rangle_{\partial \Omega_h}
      ={} -\langle \widehat{\kappa \nabla u_h}, \llbracket w \rrbracket\rangle_{\mathcal{F}_h}\\
      ={}& -\beta_0\langle \frac{\tilde{\kappa}}{\tilde{h}}{\llbracket u_h \rrbracket}, \llbracket w \rrbracket \rangle_{\mathcal{F}_h} -\langle \{\kappa \nabla {u_h}\}, \llbracket w \rrbracket \rangle_{\mathcal{F}_h} - \beta_1\langle \tilde{\kappa} \tilde{h}{\llbracket \nabla^2{u_h} \rrbracket}, \llbracket w \rrbracket \rangle_{\mathcal{F}_h}.
    \end{split}
\end{equation}
\paragraph{(v) SDDG method}
For this method, the corresponding bilinear form is modified as follows:
\begin{equation}
    \begin{split}
      &\theta_h^{\text{SDDG}}(u_h,w) := \langle \widehat{\kappa \nabla u_h} \cdot \boldsymbol{n}, w \rangle_{\partial \Omega_h}-\langle \widehat{\kappa \nabla w}, \llbracket u_h \rrbracket \rangle_{\partial \Omega_h}\\
      ={}& -2\beta_0\langle \frac{\tilde{\kappa}}{\tilde{h}}{\llbracket u_h \rrbracket}, \llbracket w \rrbracket \rangle_{\mathcal{F}_h} -\langle \{ \kappa \nabla {u_h}\}, \llbracket w \rrbracket \rangle_{\mathcal{F}_h} -\langle \{\kappa \nabla {w} \}, \llbracket u_h \rrbracket \rangle_{\mathcal{F}_h} \\
      &- \beta_1\langle \tilde{\kappa} \tilde{h}{\llbracket \nabla^2  {u_h}\rrbracket}, \llbracket w \rrbracket \rangle_{\mathcal{F}_h}- \beta_1\langle \tilde{\kappa} \tilde{h}{\llbracket \nabla^2 {w} \rrbracket}, \llbracket u_h \rrbracket \rangle_{\mathcal{F}_h}. 
    \end{split}
\end{equation}
When $\beta_1=0$, it degenerates to the SIPDG method with $\sigma = 2\beta_0$.

Despite their different flux constructions, all schemes share the same algebraic semi-discrete form, which forms the basis for the structural analysis in section \ref{sec:discrete-hamiltonian}.

\subsection{SDDG boundary conditions}\label{sec:sddg-bc}
The periodic 
boundary condition is treated separately by identifying opposite 
boundary faces, so that $\mathcal{F}_h^{\mathcal{B}} = \emptyset$ and 
no boundary contribution arises.
For the boundary conditions defined in \eqref{eq:boundary_def}, the flux bilinear form is decomposed as
\begin{equation}\label{eq:sddg-decomp}
   \theta_h^{\mathrm{SDDG}}(u_h, w) =\theta_h^{\mathrm{SDDG}}\big|_{\mathcal{I}} +\theta_h^{\mathrm{SDDG}}\big|_{\mathcal{B}} 
   = \tilde{\theta}_h^{\mathrm{SDDG}}(u_h, w) + \ell_\Gamma(w; b),
\end{equation}
where $\tilde{\theta}_h^{\mathrm{SDDG}}$ is the \emph{symmetric} part of the bilinear 
form and $\ell_\Gamma(\cdot; b)$ 
is a linear functional on $W_h$ collecting the boundary contribution 
depending linearly on $b$.  The modified semidiscrete system reads: 
find $u_h, v_h \in W_h$ such that for all $w \in W_h$,
\begin{subequations}\label{eq:semidisc-general}
    \begin{align}
    \label{eq:semidisc-bc1}
    (\partial_t u_h, w)_{\Omega_h} =& (v_h(t), w)_{\Omega_h},  \\
   \label{eq:semidisc-bc2}
   (\partial_t v_h, w)_{\Omega_h} 
   =& \tilde{\theta}_h^{\mathrm{SDDG}}(u_h, w) 
     - (\kappa\nabla u_h, \nabla w)_{\Omega_h} - (g(u_h), w)_{\Omega_h} + (f, w)_{\Omega_h}\\
    &+ \ell_\Gamma(w; b),\notag
  \end{align}
\end{subequations}
The SDDG flux 
on $\mathcal{F}_h^{\mathcal{I}}$ retains its interior form
\begin{equation}\label{eq:sddg-interior}
\begin{aligned}
   \theta_h^{\mathrm{SDDG}}\big|_{\mathcal{I}} 
   &= -2\beta_0\langle \frac{\tilde{\kappa}}{\tilde{h}}\llbracket u_h \rrbracket, \llbracket w \rrbracket \rangle_{\mathcal{F}_h^{\mathcal{I}}} 
      - \langle \{\kappa \nabla u_h\}, \llbracket w \rrbracket \rangle_{\mathcal{F}_h^{\mathcal{I}}}
      - \langle \{\kappa \nabla w\}, \llbracket u_h \rrbracket \rangle_{\mathcal{F}_h^{\mathcal{I}}} \\
   &\quad - \beta_1\langle \tilde{\kappa} \tilde{h}\llbracket \nabla^2 u_h \rrbracket, \llbracket w \rrbracket \rangle_{\mathcal{F}_h^{\mathcal{I}}}
          - \beta_1\langle \tilde{\kappa} \tilde{h}\llbracket \nabla^2 w \rrbracket, \llbracket u_h \rrbracket \rangle_{\mathcal{F}_h^{\mathcal{I}}},
\end{aligned}
\end{equation}
which is manifestly symmetric. The treatment of boundary faces 
$F \in \mathcal{F}_h^{\mathcal{B}}$ depends on the type of boundary 
condition, as detailed in §\ref{sec:dirichlet-bc}--§\ref{sec:robin-bc} 
below. The resulting expressions of $\tilde{\theta}_h^{\mathrm{SDDG}}$ 
and $\ell_\Gamma$ are summarized in Table~\ref{tab:bc-summary}.

\subsubsection{Dirichlet boundary condition($\alpha = 0$, $\beta \neq 0$)}\label{sec:dirichlet-bc}
In this case, the 
prescribed boundary value $u = b/\beta$ on $\Gamma$ is enforced weakly 
through the trace. On a boundary face $F \in \mathcal{F}_h^{\mathcal{B}}$, 
the jumps and average of $u_h$ and test function $w$ are defined as
\begin{align}\label{eq:bc-dirichlet-trace}
\llbracket u_h \rrbracket
  &= (u_h^- - b/\beta)\,\boldsymbol{n}^-,
&\qquad
\{\nabla u_h\}
  &= \nabla u_h^-,
&\qquad
\llbracket \nabla^2 u_h \rrbracket
  &= \boldsymbol{0},
\\
\llbracket w \rrbracket
  &= w^-\,\boldsymbol{n}^-,
&\qquad
\{\nabla w\}
  &= \nabla w^-,
&\qquad
\llbracket \nabla^2 w \rrbracket
  &= \boldsymbol{0}.
\end{align}
Substituting into the SDDG flux yields the boundary contribution
\begin{equation}\label{eq:sddg-dirichlet-raw}
\begin{aligned}
   \theta_h^{\mathrm{SDDG}}\big|_{\Gamma}
      ={}& -2\beta_0\langle \frac{\tilde{\kappa}}{\tilde{h}} u_h^-, w^- \rangle_{\mathcal{F}_h^{\mathcal{B}}} -\langle { \kappa \nabla {u_h^-}},  w^- \boldsymbol{n}^-\rangle_{\mathcal{F}_h^{\mathcal{B}}} -\langle {\kappa \nabla {w}^-},  u_h^- \boldsymbol{n}^-\rangle_{\mathcal{F}_h^{\mathcal{B}}} \\
      &+2\beta_0\langle \frac{\tilde{\kappa}}{\tilde{h}} b/\beta,w^- \rangle_{\mathcal{F}_h^{\mathcal{B}}}+\langle {\kappa\nabla {w}^-}\cdot \boldsymbol{n}^-, b/\beta \rangle_{\mathcal{F}_h^{\mathcal{B}}}.
\end{aligned}
\end{equation}
The first three terms together 
with the interior contribution \eqref{eq:sddg-interior} form the symmetric 
part 
\begin{equation}
    \begin{aligned}
    \tilde{\theta}_h^{\mathrm{SDDG}} = &\theta_h^{\mathrm{SDDG}}\big|_{\mathcal{I}}-2\beta_0\langle \frac{\tilde{\kappa}}{\tilde{h}} u_h^-, w^- \rangle_{\mathcal{F}_h^{\mathcal{B}}} -\langle { \kappa\nabla {u_h^-}},  w^- \boldsymbol{n}^-\rangle_{\mathcal{F}_h^{\mathcal{B}}}\\
 &-\langle {\kappa\nabla {w}^-},  u_h^- \boldsymbol{n}^-\rangle_{\mathcal{F}_h^{\mathcal{B}}}.
\end{aligned}
\end{equation}
The last two terms depend 
linearly on the boundary data $b$:
\begin{equation}\label{eq:ell-dirichlet}
   \ell_\Gamma^{\mathrm{D}}(w; b)
   := 2\beta_0\langle \frac{\tilde{\kappa}}{\tilde{h}} w^- ,b/\beta\rangle_{\mathcal{F}_h^{\mathcal{B}}}+\langle {\kappa\nabla {w}^-}\cdot \boldsymbol{n}^-, b/\beta\rangle_{\mathcal{F}_h^{\mathcal{B}}}.
\end{equation}
\subsubsection{Neumann boundary condition($\alpha \neq 0$, $\beta = 0$)}\label{sec:neumann-bc}
In this case, the 
boundary data prescribes the normal flux 
$\kappa\nabla u \cdot \boldsymbol{n} = b/\alpha$ on $\Gamma$. The boundary 
contribution to the SDDG flux then reduces to
\begin{equation}\label{eq:sddg-neumann-raw}
   \theta_h^{\mathrm{SDDG}}\big|_\Gamma 
   = \langle b/\alpha, w \rangle_{\Gamma},
\end{equation}
which is independent of $u_h$. Consequently,
\begin{equation}\label{eq:ell-neumann}
   \tilde{\theta}_h^{\mathrm{SDDG}} = \theta_h^{\mathrm{SDDG}}\big|_{\mathcal{I}},
   \qquad
   \ell_\Gamma^{\mathrm{N}}(w; b) := \langle b/\alpha, w \rangle_{\Gamma}.
\end{equation}
\subsubsection{Robin boundary condition($\alpha, \beta \neq 0$)}\label{sec:robin-bc}
In this case, the boundary 
data couples the normal flux and the solution trace through \eqref{eq:boundary_def}, which yields
\[
   \kappa\nabla u_h \cdot \boldsymbol{n}\big|_{\Gamma} = \tfrac{1}{\alpha}(b - \beta u_h).
\]
The boundary contribution becomes
\begin{equation}\label{eq:sddg-robin-raw}
   \theta_h^{\mathrm{SDDG}}\big|_{\Gamma} 
   = -\tfrac{\beta}{\alpha}\langle u_h, w \rangle_{\Gamma} 
     + \tfrac{1}{\alpha}\langle b, w \rangle_{\Gamma}.
\end{equation}
The first term is symmetric in $(u_h, w)$ and is absorbed into 
$\tilde{\theta}_h^{\mathrm{SDDG}}$, while the second term, depending 
linearly on $b$, defines $\ell_\Gamma^{\mathrm{R}}$:
\begin{equation}\label{eq:ell-robin}
   \tilde{\theta}_h^{\mathrm{SDDG}}(u_h, w) 
   = \theta_h^{\mathrm{SDDG}}\big|_{\mathcal{I}}(u_h, w) 
     - \tfrac{\beta}{\alpha}\langle u_h, w \rangle_{\Gamma},
   \qquad
   \ell_\Gamma^{\mathrm{R}}(w; b) := \tfrac{1}{\alpha}\langle b, w \rangle_{\Gamma}.
\end{equation}
\begin{table}[htbp]
\centering
\caption{SDDG flux under different boundary conditions: 
$\tilde{\theta}_h^{\mathrm{SDDG}}$ collects the symmetric part 
preserving the discrete Hamiltonian structure, and $\ell_\Gamma$ 
collects the non-symmetric boundary contribution absorbed into the 
right-hand side of the semidiscrete system.}
\label{tab:bc-summary}
\renewcommand{\arraystretch}{1.4}
\setlength{\tabcolsep}{3pt}
\footnotesize
\resizebox{\textwidth}{!}{%
\begin{tabular}{c|p{0.56\textwidth}|p{0.26\textwidth}}
\hline
Boundary type & Boundary addition to ${\theta}_h^{\mathrm{SDDG}}\big{|}_{\mathcal I}$ & $\ell_\Gamma(w; b)$ \\
\hline
Periodic & $0$ & $0$ \\[0.3em]
Dirichlet & 
$\begin{aligned}
    &-2\beta_0\langle \frac{\tilde{\kappa}}{\tilde{h}} u_h^-, w^- \rangle_{\mathcal{F}_h^{\mathcal{B}}} \\
    &-\langle { \kappa\nabla {u_h^-}},  w^- \boldsymbol{n}^-\rangle_{\mathcal{F}_h^{\mathcal{B}}} \\
    &-\langle {\kappa\nabla {w}^-},  u_h^- \boldsymbol{n}^-\rangle_{\mathcal{F}_h^{\mathcal{B}}}
\end{aligned}$ 
& $ \begin{aligned}
    &2\beta_0\langle \frac{\tilde{\kappa}}{\tilde{h}} b/\beta, w^- \rangle_{\mathcal{F}_h^{\mathcal{B}}}\\
    &+\langle {\kappa\nabla {w}^-}\cdot \boldsymbol{n}^-, b/\beta \rangle_{\mathcal{F}_h^{\mathcal{B}}}
\end{aligned}$ \\[0.3em]
Neumann & $0$ & $\frac{1}{\alpha}\langle b, w\rangle_{\Gamma}$ \\[0.3em]
Robin & $-\frac{\beta}{\alpha}\langle u_h, w\rangle_{\Gamma}$ & $\frac{1}{\alpha}\langle b, w\rangle_{\Gamma}$ \\
\hline
\end{tabular}
}
\end{table}

\subsection{Discrete Hamiltonian formulation}\label{sec:discrete-hamiltonian}
In this section, we establish the central theoretical result of this 
paper.
\begin{definition}\label{def:basis}
Let $\{\phi_i\}_{i=1}^{N_{\mathrm{basis}}}$ be a basis of the 
discontinuous finite element space $W_h$ defined in \eqref{eq:Wh}, 
and let $M_{ij} := (\phi_i, \phi_j)_{\Omega_h}$ denote the associated 
mass matrix. Without loss of generality, we assume the basis is 
$L^2$-orthonormal, i.e., $M_{ij} = \delta_{ij}$. For a general basis, 
this can be achieved by a linear change of coordinates 
$\tilde{u} = L^{\top} u$, where $M = L L^{\top}$ is the Cholesky 
factorization of the mass matrix; all subsequent statements then 
apply in the $\tilde{u}$ coordinates and translate back through the 
inverse transformation.

In this orthonormal basis, the semidiscrete solutions $u_h$ and $v_h$ 
are expanded as
\[
   u_h(\boldsymbol{x}, t) 
   = \sum_{i=1}^{N_{\mathrm{basis}}} u_i(t)\,\phi_i(\boldsymbol{x}),
   \qquad
   v_h(\boldsymbol{x}, t) 
   = \sum_{i=1}^{N_{\mathrm{basis}}} v_i(t)\,\phi_i(\boldsymbol{x}),
\]
with degrees of freedom 
$\{u_i(t)\}, \{v_i(t)\} \in \mathbb{R}^{N_{\mathrm{basis}}}$.
\end{definition}

\begin{definition}\label{def:discrete-energy}
Let $(u_h, v_h)$ denote the semidiscrete solution of \eqref{eq:semidisc-general}. 
The \emph{discrete energy} associated with the semidiscrete system 
is defined as
\begin{equation}\label{eq:discrete-energy}
    \begin{aligned}
           \mathcal{E}_h(u_h, v_h, t) 
   := &\tfrac{1}{2}(v_h, v_h)_{\Omega_h} 
    + \tfrac{1}{2}(\kappa\nabla u_h, \nabla u_h)_{\Omega_h} - \tfrac{1}{2}\tilde{\theta}_h(u_h, u_h)\\
   & 
    + (G(u_h), 1)_{\Omega_h}
    - (f(t), u_h)_{\Omega_h} - \ell_\Gamma(u_h; b),
    \end{aligned}
\end{equation}
where $G$ is the primitive of the nonlinearity $g$, 
i.e., $G'(u) = g(u)$.
\end{definition}

\begin{theorem}\label{thm:hamiltonian}
Let $\tilde{\theta}_h: W_h \times W_h \to \mathbb{R}$ be the bilinear form 
associated with an auxiliary-variable-free DG discretization. Setting $\mathrm{q}_i := u_i$ and $\mathrm{p}_i := v_i$ 
for $i = 1, \ldots, N_{\mathrm{basis}}$, the following are equivalent:
\begin{enumerate}
\item[(i)] The bilinear form $\tilde{\theta}_h$ is symmetric:
\[
   \tilde{\theta}_h(u, w) = \tilde{\theta}_h(w, u) 
   \quad \forall\, u, w \in W_h.
\]
\item[(ii)] There exists a scalar function 
$\mathcal{H}_h(\boldsymbol{\mathrm{q}}, \boldsymbol{\mathrm{p}}, t)$ such that the 
semidiscrete system \eqref{eq:semidisc-general} can be written as the 
canonical Hamiltonian system
\begin{equation}\label{eq:hamiltonian-system}
   \frac{\mathrm{d}\mathrm{q}_i}{\mathrm{d}t} 
   = \frac{\partial \mathcal{H}_h}{\partial \mathrm{p}_i},
   \qquad
   \frac{\mathrm{d}\mathrm{p}_i}{\mathrm{d}t} 
   = -\frac{\partial \mathcal{H}_h}{\partial \mathrm{q}_i},
   \qquad i = 1, \ldots, N_{\mathrm{basis}}.
\end{equation}
\end{enumerate}
Moreover, in this case, the Hamiltonian function coincides with the 
discrete energy: 
$\mathcal{H}_h(\boldsymbol{\mathrm{q}}, \boldsymbol{\mathrm{p}}, t) = \mathcal{E}_h(u_h, v_h, t)$.
\end{theorem}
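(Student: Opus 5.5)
We will work entirely in the orthonormal coordinates of Definition~\ref{def:basis}, where $M=I$, and reduce everything to a statement about a finite-dimensional vector field. Testing \eqref{eq:semidisc-general} with $w=\phi_i$ gives
\[
\dot{\mathrm q}_i=\mathrm p_i,\qquad \dot{\mathrm p}_i = F_i(\boldsymbol{\mathrm q},t),
\]
with
\[
F_i(\boldsymbol{\mathrm q},t)=\sum_j \bigl(\Theta_{ji}-A_{ij}\bigr)\mathrm q_j-(g(u_h),\phi_i)_{\Omega_h}+(f,\phi_i)_{\Omega_h}+\ell_\Gamma(\phi_i;b).
\]
Here $\Theta_{ji}:=\tilde\theta_h(\phi_j,\phi_i)$ and $A_{ij}:=(\kappa\nabla\phi_i,\nabla\phi_j)_{\Omega_h}$. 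The matrix $A$ is symmetric by inspection. The first equation forces $\partial\mathcal H_h/\partial \mathrm p_i=\mathrm p_i$ in any Hamiltonian representation. Hence (ii) holds if and only if the force field $F(\cdot,t)$ is a gradient field, $F=-\nabla_{\boldsymbol{\mathrm q}}V$, with $\mathcal H_h=\tfrac12|\boldsymbol{\mathrm p}|^2+V(\boldsymbol{\mathrm q},t)$. In making this reduction I will note that $\partial\mathcal H_h/\partial\mathrm p_i=\mathrm p_i$ forces $\mathcal H_h=\tfrac12|\boldsymbol{\mathrm p}|^2+V(\boldsymbol{\mathrm q},t)$ after integration in $\boldsymbol{\mathrm p}$.

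\textbf{(i)$\Rightarrow$(ii).} Assume $\Theta$ is symmetric. We take $\mathcal H_h=\mathcal E_h$ from \eqref{eq:discrete-energy} and differentiate term by term in the coordinates:
\begin{itemize}
\item $\partial_{\mathrm p_i}\tfrac12(v_h,v_h)=\mathrm p_i$ by orthonormality;
\item $\partial_{\mathrm q_i}\tfrac12(\kappa\nabla u_h,\nabla u_h)=(A\boldsymbol{\mathrm q})_i$;
\item $\partial_{\mathrm q_i}\bigl(-\tfrac12\tilde\theta_h(u_h,u_h)\bigr)=-\tfrac12\bigl(\Theta+\Theta^\top\bigr)_{i\cdot}\boldsymbol{\mathrm q}$, which equals $-\sum_j\Theta_{ji}\mathrm q_j$ exactly because of symmetry;
\item $\partial_{\mathrm q_i}(G(u_h),1)=(g(u_h),\phi_i)$ by the chain rule, since $G'=g$ (differentiation under the integral is justified for smooth $G$ on the compact polynomial space);
\item the two linear terms $-(f,u_h)-\ell_\Gamma(u_h;b)$ give $-(f,\phi_i)-\ell_\Gamma(\phi_i;b)$.
\end{itemize}
Summing, $-\partial_{\mathrm q_i}\mathcal E_h=F_i$, which establishes both (ii) and the identification $\mathcal H_h=\mathcal E_h$.

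\textbf{(ii)$\Rightarrow$(i).} By the reduction, $F(\cdot,t)=-\nabla V$ for a $C^2$ function $V$, so the Jacobian $\partial F_i/\partial\mathrm q_k$ must be symmetric (equality of mixed partials of $V$). We compute this Jacobian:
\[
\frac{\partial F_i}{\partial \mathrm q_k}=\Theta_{ki}-A_{ik}-(g'(u_h)\phi_k,\phi_i)_{\Omega_h}.
\]
The second and third terms are symmetric in $(i,k)$; the third is a weighted mass matrix. Therefore symmetry of the Jacobian forces $\Theta_{ki}=\Theta_{ik}$ for all $i,k$. By bilinearity this gives $\tilde\theta_h(u,w)=\tilde\theta_h(w,u)$ on $W_h$.

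The main obstacle is the rigor of the reduction step, specifically confirming that the $\boldsymbol{\mathrm p}$-equation leaves no freedom in $\mathcal H_h$ that could absorb a nonsymmetric part of $\Theta$. This is settled by the form $\mathcal H_h=\tfrac12|\boldsymbol{\mathrm p}|^2+V(\boldsymbol{\mathrm q},t)$ noted above. A secondary point is that the nonlinear term requires $g\in C^1$ for the Jacobian argument. I will state this assumption, and alternatively note that it suffices to compare the linear parts by evaluating at $\boldsymbol{\mathrm q}$ and using that the $g$-contribution is itself a gradient, namely of $(G(u_h),1)$. Subtracting this gradient leaves the linear field $(\Theta^\top-A)\boldsymbol{\mathrm q}$ plus constants, which is a gradient iff $\Theta^\top-A$ is symmetric. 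This avoids differentiability of $g$ altogether.
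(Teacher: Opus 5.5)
Your proposal is correct and follows essentially the same route as the paper. It integrates $\partial\mathcal H_h/\partial\mathrm p_i=\mathrm p_i$ to get $\mathcal H_h=\tfrac12|\boldsymbol{\mathrm p}|^2+\mathcal V(\boldsymbol{\mathrm q},t)$, proves (i)$\Rightarrow$(ii) by differentiating $\mathcal E_h$ term by term, and proves (ii)$\Rightarrow$(i) from equality of mixed partials of $\mathcal V$, using that the stiffness and $g'$-weighted mass contributions are symmetric. Your extra remark is also valid: subtracting the gradient of $(G(u_h),1)_{\Omega_h}$ leaves the affine field $(\Theta^\top-A)\boldsymbol{\mathrm q}+c(t)$, which is a gradient iff $\Theta^\top-A$ is symmetric, so the paper's $g\in C^1$ assumption can be weakened to continuity of $g$.
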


\begin{proof}
We prove (i) $\Rightarrow$ (ii) and (ii) $\Rightarrow$ (i) separately.

\medskip\noindent
\textbf{(i) $\Rightarrow$ (ii).}\quad
Define $\mathcal{H} := \mathcal{E}_h$ with $\mathcal{E}_h$ given by 
\eqref{eq:discrete-energy}. For the first equation, using $\mathrm{q}_i = u_i$, \eqref{eq:semidisc-bc1} tested with
$w = \phi_i$ together with the orthonormality of $\{\phi_i\}$ gives
\[
   \frac{\mathrm{d}{\mathrm{q}_i}}{\mathrm{d}t} 
   = \partial_t u_i 
   = (v_h, \phi_i)_{\Omega_h} 
   = \left(v_h, \frac{\partial v_h}{\partial v_i}\right)_{\!\Omega_h}
   = \frac{\partial}{\partial v_i}\!\left[\tfrac{1}{2}(v_h, v_h)_{\Omega_h}\right]
   = \frac{\partial \mathcal{E}_h}{\partial \mathrm{p}_i}
   = \frac{\partial \mathcal{H}_h}{\partial \mathrm{p}_i}.
\]

For the second equation, using $\mathrm{p}_i = v_i$ and testing \eqref{eq:semidisc-bc2} with $w = \phi_i$,
\[
   \frac{\mathrm{d}\mathrm{p}_i}{\mathrm{d}t} 
   = \partial_t v_i 
   = \tilde{\theta}_h(u_h, \phi_i) 
   - (\kappa\nabla u_h, \nabla\phi_i)_{\Omega_h}
   - (g(u_h), \phi_i)_{\Omega_h}
   + (f, \phi_i)_{\Omega_h} + \ell_\Gamma(\phi_i; b).
\]
Since $\partial u_h / \partial u_i = \phi_i$, the symmetry of 
$\tilde{\theta}_h$ implies
\[
   \tilde{\theta}_h(u_h, \phi_i) 
   = \frac{1}{2}\frac{\partial}{\partial u_i}\tilde{\theta}_h(u_h, u_h),
\]
and analogous identities hold for the remaining terms by a direct 
computation:
\[
   (\kappa\nabla u_h, \nabla\phi_i)_{\Omega_h} 
   = \frac{1}{2}\frac{\partial}{\partial u_i}(\kappa\nabla u_h, \nabla u_h)_{\Omega_h},
   \qquad
   (g(u_h), \phi_i)_{\Omega_h} 
   = \frac{\partial}{\partial u_i}(G(u_h), 1)_{\Omega_h}.
\]
Combining these identities,
\begin{align*}
       \frac{\mathrm{d}\mathrm{p}_i}{\mathrm{d}t} 
   = &\frac{\partial}{\partial u_i}\!\left[
       \frac{1}{2}\tilde{\theta}_h(u_h, u_h)
       - \tfrac{1}{2}(\kappa\nabla u_h, \nabla u_h)_{\Omega_h}
       - (G(u_h), 1)_{\Omega_h}
       + (f, u_h)_{\Omega_h} + \ell_\Gamma(u_h; b)
     \right]\\
   =& -\frac{\partial \mathcal{E}_h}{\partial u_i}
   = -\frac{\partial \mathcal{H}_h}{\partial \mathrm{q}_i}.
\end{align*}

\medskip\noindent
\textbf{(ii) $\Rightarrow$ (i).}\quad
Assume there exists $\mathcal{H}_h(\boldsymbol{\mathrm{q}}, \boldsymbol{\mathrm{p}}, t)$
such that \eqref{eq:hamiltonian-system} holds.
The first equation $\mathrm{d}\mathrm{q}_i/\mathrm{d}t = \partial \mathcal{H}_h/\partial \mathrm{p}_i$, 
combined with $\partial_t u_i = v_i = \mathrm{p}_i$ from \eqref{eq:semidisc-bc1}, 
yields $\partial \mathcal{H}_h/\partial \mathrm{p}_i = \mathrm{p}_i$ for all $i$. 
Integrating, 
\[
  \mathcal{H}_h(\boldsymbol{\mathrm{q}}, \boldsymbol{\mathrm{p}}, t)
   = \tfrac{1}{2}\boldsymbol{\mathrm{p}}^\top \boldsymbol{\mathrm{p}} + \mathcal{V}(\boldsymbol{\mathrm{q}}, t),
\]
where $\mathcal{V}(\boldsymbol{\mathrm{q}}, t)$ is a function independent of $\boldsymbol{\mathrm{p}}$.

The second equation 
$\mathrm{d}\mathrm{p}_i/\mathrm{d}t = -\partial \mathcal{H}_h/\partial \mathrm{q}_i 
= -\partial \mathcal{V}/\partial \mathrm{q}_i$, combined with \eqref{eq:semidisc-bc2}
tested with $\phi_i$, yields
\begin{equation}\label{eq:G-equation}
   -\frac{\partial \mathcal{V}}{\partial q_i}
   = \tilde{\theta}_h(u_h, \phi_i) 
   - (\kappa\nabla u_h, \nabla\phi_i)_{\Omega_h}
   - (g(u_h), \phi_i)_{\Omega_h}
   + (f, \phi_i)_{\Omega_h} + \ell_\Gamma(\phi_i; b).
\end{equation}
Assuming $g \in C^1(\mathbb{R})$, the right-hand side of 
\eqref{eq:G-equation} is $C^1$ in $\boldsymbol{\mathrm{q}}$, so 
$\mathcal{V} \in C^2$ and Schwarz's theorem on the equality of mixed second 
partial derivatives yields
\begin{equation}\label{eq:schwarz}
   \frac{\partial^2 \mathcal{V}}{\partial \mathrm{q}_i \partial \mathrm{q}_j}
   = \frac{\partial^2 \mathcal{V}}{\partial \mathrm{q}_j \partial \mathrm{q}_i}
   \quad \forall\, i, j.
\end{equation}
Differentiating \eqref{eq:G-equation} with respect to $\mathrm{q}_j = u_j$
and using $\partial u_h / \partial u_j = \phi_j$, we obtain
\[
   -\frac{\partial^2 \mathcal{V}}{\partial \mathrm{q}_j \partial \mathrm{q}_i}
   = \tilde{\theta}_h(\phi_j, \phi_i)
   - (\kappa\nabla\phi_j, \nabla\phi_i)_{\Omega_h}
   - (g'(u_h)\phi_j, \phi_i)_{\Omega_h}.
\]
Here the terms $(f,\phi_i)_{\Omega_h}$ and $\ell_\Gamma(\phi_i; b)$ do not
contribute to the derivative with respect to $\mathrm{q}_j$, since the source
and boundary data are prescribed independently of $u_h$.
The weighted inner products $(\kappa\nabla\phi_j, \nabla\phi_i)_{\Omega_h}$ and 
$(g'(u_h)\phi_j, \phi_i)_{\Omega_h}$ are symmetric in $(i, j)$, 
hence \eqref{eq:schwarz} reduces to
\[
   \tilde{\theta}_h(\phi_j, \phi_i) = \tilde{\theta}_h(\phi_i, \phi_j)
   \quad \forall\, i, j.
\]
By bilinearity of $\tilde{\theta}_h$ and the fact that $\{\phi_i\}$ is a basis 
of $W_h$, this is equivalent to the symmetry of $\tilde{\theta}_h$ on 
$W_h \times W_h$, proving (i).
\end{proof}
\begin{corollary}\label{cor:energy-conservation}
Assume the bilinear form $\tilde{\theta}_h$ is symmetric, 
the discrete energy $\mathcal{E}_h$ defined in \eqref{eq:discrete-energy} is 
conserved along trajectories of the semidiscrete system 
\eqref{eq:semidisc-general}, whenever the source term $f$, the boundary data $b$,
and the nonlinearity term $g$ are independent of time.
\end{corollary}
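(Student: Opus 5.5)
The plan is to combine Theorem~\ref{thm:hamiltonian} with the standard fact that an autonomous Hamiltonian is a first integral of its flow. Since $\tilde{\theta}_h$ is symmetric, the implication (i)~$\Rightarrow$~(ii) of Theorem~\ref{thm:hamiltonian} shows that the semidiscrete system \eqref{eq:semidisc-general}, written in the orthonormal coordinates of Definition~\ref{def:basis}, is the canonical system \eqref{eq:hamiltonian-system} with $\mathcal{H}_h(\boldsymbol{\mathrm{q}},\boldsymbol{\mathrm{p}},t)=\mathcal{E}_h(u_h,v_h,t)$.

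Next I would differentiate $\mathcal{H}_h$ along a trajectory with the chain rule:
\[
\frac{\mathrm{d}}{\mathrm{d}t}\mathcal{H}_h
=\sum_{i}\left(\frac{\partial \mathcal{H}_h}{\partial \mathrm{q}_i}\frac{\mathrm{d}\mathrm{q}_i}{\mathrm{d}t}+\frac{\partial \mathcal{H}_h}{\partial \mathrm{p}_i}\frac{\mathrm{d}\mathrm{p}_i}{\mathrm{d}t}\right)+\frac{\partial \mathcal{H}_h}{\partial t}.
\]
Substituting \eqref{eq:hamiltonian-system}, the sum becomes $\sum_i\bigl(\partial_{\mathrm{q}_i}\mathcal{H}_h\,\partial_{\mathrm{p}_i}\mathcal{H}_h-\partial_{\mathrm{p}_i}\mathcal{H}_h\,\partial_{\mathrm{q}_i}\mathcal{H}_h\bigr)=0$, which leaves only the explicit time derivative $\partial_t\mathcal{H}_h$.

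It then remains to check that $\partial_t\mathcal{H}_h=0$, i.e.\ that $\mathcal{E}_h$ in \eqref{eq:discrete-energy} has no explicit $t$-dependence. The terms $\tfrac12(v_h,v_h)_{\Omega_h}$, $\tfrac12(\kappa\nabla u_h,\nabla u_h)_{\Omega_h}$ and $\tfrac12\tilde{\theta}_h(u_h,u_h)$ depend on $t$ only through the coefficients, because $\kappa$, the mesh, the penalty parameters and the boundary parts of $\tilde{\theta}_h$ in Table~\ref{tab:bc-summary} are all time-independent. The term $(G(u_h),1)_{\Omega_h}$ is autonomous because $g$, and hence its primitive $G$, does not depend on time. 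The remaining terms $-(f,u_h)_{\Omega_h}$ and $-\ell_\Gamma(u_h;b)$ carry explicit time dependence only through $f(t)$ and $b(t)$, and both are assumed constant in time.

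The only step needing care is this last one. One has to confirm that $\ell_\Gamma$ depends on $t$ solely through $b$ in every boundary case of Table~\ref{tab:bc-summary}, including the Dirichlet case where $b/\beta$ enters through the trace. One also has to confirm that the change of basis $M=LL^\top$ is time-independent, so that the Hamiltonian structure and the chain rule computation carry over unchanged to a general basis. Alternatively, one could verify $\frac{\mathrm{d}}{\mathrm{d}t}\mathcal{E}_h=0$ directly by testing \eqref{eq:semidisc-bc2} with $w=\partial_t u_h=v_h$ and \eqref{eq:semidisc-bc1} with $w=\partial_t v_h$, using the symmetry identity $\frac{\mathrm{d}}{\mathrm{d}t}\tfrac12\tilde{\theta}_h(u_h,u_h)=\tilde{\theta}_h(u_h,\partial_t u_h)$. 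I would include this as a cross-check.
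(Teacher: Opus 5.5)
Your proposal is correct and follows the paper's proof: use (i)$\Rightarrow$(ii) of Theorem~\ref{thm:hamiltonian} to identify $\mathcal{E}_h$ as the Hamiltonian, then cancel the chain-rule sum using the canonical equations. You are slightly more careful than the paper, whose display silently drops the explicit term $\partial_t\mathcal{H}_h$; your check that this term vanishes because $f$, $b$, $g$ (and $\kappa$, the mesh) are time-independent is exactly where the corollary's hypotheses enter.
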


\begin{proof}
By Theorem~\ref{thm:hamiltonian}, the semidiscrete system 
\eqref{eq:semidisc-general} is a canonical Hamiltonian system with 
$\mathcal{H}_h = \mathcal{E}_h$. The chain rule yields
\begin{equation}\label{eq:energy-conservation}
   \frac{\mathrm{d}\mathcal{E}_h}{\mathrm{d}t}
   = \sum_i \left(\frac{\partial \mathcal{H}_h}{\partial \mathrm{q}_i}\,\frac{\mathrm{d} \mathrm{q}_i}{\mathrm{d}t}
                + \frac{\partial \mathcal{H}_h}{\partial \mathrm{p}_i}\,\frac{\mathrm{d}\mathrm{p}_i}{\mathrm{d}t}\right)
   = \sum_i \left(\frac{\partial \mathcal{H}_h}{\partial \mathrm{q}_i}\,\frac{\partial \mathcal{H}_h}{\partial \mathrm{p}_i}
                - \frac{\partial \mathcal{H}_h}{\partial \mathrm{p}_i}\,\frac{\partial \mathcal{H}_h}{\partial \mathrm{q}_i}\right)
   = 0.
\end{equation}
\end{proof}

Conversely, when $\tilde{\theta}_h$ is non-symmetric, neither energy 
conservation nor a discrete Hamiltonian structure is available. We 
illustrate the energy non-conservation for the standard DDG flux 
$\theta_h^{\mathrm{DDG}}$; the analysis for other non-symmetric 
schemes is analogous. A direct computation using integration by 
parts and \eqref{eq:DGidentity} gives, for $f \equiv 0$ and 
$g \equiv 0$,
\[
   \frac{\mathrm{d}\mathcal{E}_h^{\mathrm{DDG}}}{\mathrm{d}t}
   = \tfrac{1}{2}\bigl(\langle \widehat{\kappa\nabla u_h}, \llbracket v_h\rrbracket\rangle_{\mathcal{F}_h}
   - \langle \widehat{\kappa\nabla v_h}, \llbracket u_h\rrbracket\rangle_{\mathcal{F}_h}\bigr)
   \;\not\equiv\; 0.
\]

Theorem~\ref{thm:hamiltonian} provides a general design criterion: 
any auxiliary-variable-free DG method for the wave equation admits 
a discrete Hamiltonian structure if and only if its numerical flux 
bilinear form is symmetric. This criterion is independent of the 
specific choice of penalty parameters, basis functions, mesh 
type or boundary condition and serves as a guiding principle for designing new 
symplectic-compatible DG schemes. Applying it to the five DG 
formulations introduced in §\ref{sec:DG formulation}, we obtain 
the classification summarized in Table~\ref{tab:hamiltonian-classification}.

\begin{table}[htbp]
\centering
\caption{Classification of DG methods for second-order spatial 
operators according to the symmetry of the flux bilinear form 
$\theta_h$ and the existence of a discrete Hamiltonian structure.}
\label{tab:hamiltonian-classification}
\renewcommand{\arraystretch}{1.3}
\begin{tabular}{lcc}
\hline
Method & $\theta_h$ symmetric? & Discrete Hamiltonian? \\
\hline
Baumann--Oden \cite{baumann1998discontinuous} & No  & No  \\
SIPDG \cite{grote2006discontinuous}           & Yes & Yes \\
BR2 \cite{bassi2005discontinuous}             & No  & No  \\
DDG \cite{liu2009direct}                      & No  & No  \\
SDDG \cite{yue2017symmetric}                  & Yes & Yes \\
\hline
\end{tabular}
\end{table}
\subsection{Spatial Assembly}
Based on the semi-discrete DG formulation introduced above, we proceed to the
spatial assembly. The initial condition functions $u_0$ and $v_0$ are projected onto $W_h$ using the $L^2$-orthogonal projection $\Pi_h$
so that $
u_h(\cdot,0) = \Pi_h u_0$ and $v_h(\cdot,0) = \Pi_h v_0$. 
Let $U(t), V(t) \in \mathbb{R}^{N\cdot\mathrm{DOF}}$ denote the global coefficient vectors of $u_h$ and $v_h$ with respect to the orthonormal basis. We define the weighted stiffness and flux matrices $K_\kappa, T_\kappa \in \mathbb{R}^{N\cdot\mathrm{DOF} \times N\cdot\mathrm{DOF}}$ and vectors $F(t), B(t), G(U) \in \mathbb{R}^{N\cdot\mathrm{DOF}}$ by

\begin{align*} 
&(K_\kappa)_{\beta ,\alpha}=(\kappa\nabla \phi_{\alpha},\nabla \phi_{\beta })_{\Omega_h},&\quad (T_\kappa)_{\beta ,\alpha}=
\tilde{\theta}_h\Bigl(\phi_{\alpha},\phi_{\beta }\Bigr),\\
&F_{\beta }(t)=(f(t),\phi_{\beta })_{\Omega_h}, &\quad B_{\beta}(t)=\ell_\Gamma(\phi_{\beta}; b(t)),\\
&{N_g(U)}_{\beta}
=\Bigl(g\bigl(\sum_{\alpha=1}^{N\cdot\mathrm{DOF}} u_\alpha\phi_\alpha\bigr),\phi_{\beta }\Bigr)_{\Omega_h},  &\quad{N_G(U)}
=\Bigl(G\bigl(\sum_{\alpha=1}^{N\cdot\mathrm{DOF}} u_\alpha\phi_\alpha\bigr),1 \Bigr)_{\Omega_h},
\end{align*}
where $\phi_\alpha$, $\alpha={(i-1)\mathrm{DOF}+k}$ represents the $k$-th basis function on the $i$-th cell and $\beta = {(j-1)\,\mathrm{DOF}+l }$. By the orthogonality of $\phi$, we have \(M=\mathbb{I}_{N\cdot \mathrm{DOF}}\) , where $\mathbb{I}_{N\cdot \mathrm{DOF}}$ denote the identity matrix of size \(N\cdot \mathrm{DOF}\). By introducing additionally the combined state variable $Z
=
\begin{bmatrix}
U,
V
\end{bmatrix}^\top
\in \mathbb{R}^{2N\cdot \mathrm{DOF}}$,
the semi-discrete system can be rewritten as:
\begin{equation}
\label{eq:semi_discrete_matrix}
\frac{\mathrm{d}Z}{\mathrm{d}t}
=
\begin{bmatrix}
\mathbb{O}_{N\cdot \mathrm{DOF}} & \mathbb{I}_{N\cdot \mathrm{DOF}}\\
-(K_\kappa-T_\kappa) & \mathbb{O}_{N\cdot \mathrm{DOF}}
\end{bmatrix}
Z
+
\begin{bmatrix}
\boldsymbol{0}_{N\cdot \mathrm{DOF}}\\
-  N_g(U) +  F(t) + B(t)
\end{bmatrix},
\end{equation}
where \(\mathbb{O}_{N\cdot \mathrm{DOF}}\) denote the zero matrix of size \(N\cdot \mathrm{DOF}\) and \(\boldsymbol{0}_{N\cdot\mathrm{DOF}} \in \mathbb{R}^{N\cdot\mathrm{DOF}}\) denotes the vector of zeros. The matrix form of discrete Hamiltonian is given by
\begin{equation}
\label{eq:discrete_hamiltonian}
H_h
=
\frac{1}{2} V^\top V
+
\left(
\frac{1}{2} U^\top (K_\kappa-T_\kappa)U
- (F(t)^\top+B(t)^\top)U + N_G(U)
\right).
\end{equation}
where $\mathcal{V}=\left(
\frac{1}{2} U^\top (K_\kappa-T_\kappa)U
- (F(t)^\top+B(t)^\top)U + N_G(U)
\right)$ is the potential energy, $\mathcal{T}=
\frac{1}{2} V^\top V$ is the kinetic energy.
\section{Fully symplectic discrete framework}\label{sec:tempo-disc}
\subsection{Explicit Symplectic Partitioned Runge-Kutta (ESPRK) methods}
We first introduce the ESPRK methods, which are well-suited for separable Hamiltonian systems. Let \((U^n, V^n)\) denote the numerical solution at time level \(t_n\), and let \((U^{n,i}, V^{n,i})\) denote the stage values of the time integrator for \(i = 1, \dots, s\). We initialize the stage iterations by setting \(U^{n,0} = U^n\) and \(V^{n,0} = V^n\). The fully discrete scheme then proceeds as follows:
\begin{align*}
    &U^{n,1}=U^{n,0},\quad
    V^{n,1} = V^{n,{0}} - \Delta tb_1\frac{\partial \mathcal{V}}{\partial U}(U^{n,1},t_n+b_1\Delta t/2),\\
    &U^{n,{i}} = U^{n,{i-1}} + \Delta t \tilde{b}_{i-1} \frac{\partial \mathcal{T}}{\partial V}(V^{n,i},t_n+\sum_{j=1}^{i-1}\tilde{b}_j\Delta t),\quad &\forall i =2,...,s,\\
    &V^{n,i} = V^{n,{i-1}} - \Delta tb_i\frac{\partial \mathcal{V}}{\partial U}(U^{n,i},t_n+\sum_{j=1}^{i}b_j\Delta t),\\
    &U^{n+1} = U^{n,s}, \quad V^{n+1} = V^{n,{s}}.
\end{align*}
The coefficients $b_i$ and $\tilde{b}_i$ in the scheme are taken from~\cite{sanchez2017symplectic}.
\subsection{Symplectic Diagonally Implicit Runge-Kutta (SDIRK) methods}
\label{subsec:sdirk}
We also
consider SDIRK schemes to exhibit long-time behaviour for HDG Hamiltonian systems.
Applying the $s$-stage SDIRK method with $s$ stages to \eqref{eq:semi_discrete_matrix},
the stage variables $Z^{n,i}$ satisfy 
\begin{align}
    \label{eq:sdirk-stage}
        &\mathcal A_i Z^{n,i}
+ \Delta t\,\frac{b_i}{2}\,\mathcal G(Z^{n,i})
= \mathcal L_i(t_n+(b_i/2+\sum_{j=1}^{s-1}b_j)\Delta t),
\quad i=1,\dots,s,\\
&K_i
=\frac{2}{\Delta t\, b_i}\bigl(Z^{n,i}-Z^n\bigr)
-2\sum_{j=1}^{i-1}\frac{b_j}{b_i}K_j,\quad i=1,\dots,s,
\end{align}
where the non-linear term and the block matrix $\mathcal A_i$ are defined as
\[
\mathcal G(Z)=
\begin{pmatrix}
\boldsymbol{0}_{N \cdot {DOF}} \\
N_g(U)
\end{pmatrix},\quad \mathcal A_i =
\begin{pmatrix}
\mathbb{I}_{N \cdot {DOF}} &
-\Delta t\,\dfrac{b_i}{2}\,\mathbb{I}_{N \cdot {DOF}} \\[1ex]
\Delta t\,\dfrac{b_i}{2}\,(K_\kappa-T_\kappa) &
\mathbb{I}_{N \cdot {DOF}}
\end{pmatrix}.
\]
the right-hand side $\mathcal L_i$ reads
\begin{equation*}
\mathcal L_i(t)
=
Z^n
+
\Delta t
\sum_{j=1}^{i-1} b_j\,K_j
+
\Delta t\,\frac{b_i}{2}
\begin{pmatrix}
\boldsymbol{0}_{N \cdot {DOF}} \\
F(t)+B(t)
\end{pmatrix}.
\end{equation*}
We solve the resulting non-linear system \eqref{eq:sdirk-stage} using a fixed point iteration based on the inversion of the matrix $\mathcal A_i$.  Finally, the solution is updated as
\begin{equation}\label{eq:update}
Z^{n+1}
=
Z^n
+
\Delta t
\sum_{i=1}^s b_i K_i.
\end{equation}
The coefficients $b_i$ in the scheme are taken from~\cite{sanchez2017symplectic}.
\begin{definition}[Symplectic Hamiltonian DDG method]
Let the spatial semi-discretization be given by the SDDG method, whose 
numerical flux induces a symmetric bilinear form $\theta_h^{\mathrm{SDDG}}$ 
and therefore admits a discrete Hamiltonian structure by Theorem~\ref{thm:hamiltonian}. 
When the resulting semi-discrete Hamiltonian system \eqref{eq:semi_discrete_matrix} 
is integrated in time using a symplectic time integrator (ESPRK or SDIRK), 
the fully discrete scheme preserves the symplectic structure. We refer to this 
combination as the {symplectic Hamiltonian DDG method}.
\end{definition}
\section{Error analysis}
\label{sec:error-analysis}
In this section, we analyse the spatial semidiscrete error of the SDDG
method for the semilinear wave equation. Let $(u,v)$ denote the exact
solution and let $(u_h,v_h)$ denote the solution of
\eqref{eq:semidisc-general}.  Define
\begin{equation}
\label{eq:errs}
e_u := u_h - u, \qquad e_v := v_h - v,
\end{equation}
which represent the errors in the displacement and velocity components,
respectively.  We use the decompositions
\begin{subequations}
\label{eq:error-decomposition}
\begin{align}
e_u&=\xi_u+\eta_u,
&\xi_u&:=u_h-\Pi_hu,
&\eta_u&:=\Pi_hu-u,
\label{eq:displacement-error-decomposition}\\
e_v&=\xi_v+\eta_v,
&\xi_v&:=v_h-\Pi_hv,
&\eta_v&:=\Pi_hv-v.
\label{eq:velocity-error-decomposition}
\end{align}
\end{subequations}
Here $\xi_u,\xi_v$ are the discrete errors and $\eta_u,\eta_v$ are the
projection errors. 
\subsection{Preliminary}
We begin with the regularity assumptions on the exact solution.  Assume
that the solution of \eqref{eq:system} satisfies
\begin{equation}
\label{eq:solution-regularity}
u, v \in L^\infty(J;H^{1+\sigma}(\Omega)), \quad
\partial_t v \in L^1(J;H^{\sigma}(\Omega)),
\quad \sigma>1+\frac{d}{2}.
\end{equation}
The Sobolev embedding theorem gives
\[
H^{1+\sigma}(\Omega)\hookrightarrow C^2(\overline\Omega).
\]
Thus the Hessians $\nabla^2u$ and $\nabla^2v$ are continuous functions and
their traces on the interfaces are well defined. We also assume that $g$ and its first derivative $g'$ are Lipschitz
continuous:
\begin{align}
    \label{eq:g_lipschitz}
    |g(w_1(\boldsymbol{x}))-g(w_2(\boldsymbol{x}))|&\leq L_g |w_1(\boldsymbol{x})-w_2(\boldsymbol{x})|, \text{a.e } \boldsymbol{x}\in \Omega,\forall w_1,w_2 \in L^2(\Omega),\\
    \label{eq:g'_lipschitz}
    |g'(w_1(\boldsymbol{x}))-g'(w_2(\boldsymbol{x}))|&\leq L_{g'} |w_1(\boldsymbol{x})-w_2(\boldsymbol{x})|, \text{a.e } \boldsymbol{x}\in \Omega,\forall w_1,w_2 \in L^2(\Omega).
\end{align}

For the error analysis, we
allow the boundary to be partitioned into mutually disjoint Dirichlet,
Neumann, and Robin parts, denoted by
$\mathcal F_h^{\mathrm D}$, $\mathcal F_h^{\mathrm N}$, and
$\mathcal F_h^{\mathrm R}$.  Periodic faces are identified in pairs and are
included in the interior skeleton.  We set
\begin{equation}
\label{eq:active-face-set}
   \mathcal F_h^{\mathrm A}
   :=\mathcal F_h^{\mathcal I}\cup\mathcal F_h^{\mathrm D},
   \qquad
   \gamma_{\mathrm R}:=\frac{\beta}{\alpha}\quad\hbox{on }\Gamma_{\mathrm R},
\end{equation}
and assume that
$\gamma_{\mathrm R}\in L^\infty(\Gamma_{\mathrm R})$.  Its positive and
negative parts are denoted by
\[
\gamma_{\mathrm R}^{+}:=\max\{\gamma_{\mathrm R},0\},
\qquad
\gamma_{\mathrm R}^{-}:=\max\{-\gamma_{\mathrm R},0\},
\qquad
\gamma_{\mathrm R}=\gamma_{\mathrm R}^{+}-\gamma_{\mathrm R}^{-}.
\]

To state a priori error bounds, we use the extended energy space
\begin{equation}
\label{eq:extended-space}
W(h):=H^1(\Omega)+W_h
=\{w=w_c+w_d:\ w_c\in H^1(\Omega),\ w_d\in W_h\}.
\end{equation}
We define the boundary-dependent DG energy norm
\begin{equation}
  \label{eq:boundary-energy-norm}
  \|w\|_{h,B}^2
  :=(\kappa\nabla w,\nabla w)_{\Omega_h}
  +\lambda^2\left\langle
       \frac{\tilde{\kappa}}{\tilde{h}}\llbracket w\rrbracket,
       \llbracket w\rrbracket
     \right\rangle_{\mathcal F_h^{\mathrm A}}
  +\langle|\gamma_{\mathrm R}|w,w\rangle_{\Gamma_{\mathrm R}},
\end{equation}
where $\lambda>1$, and $\tilde{h}$ and $\tilde{\kappa}$ are the face-wise
functions defined in Section~\ref{sec:Notation}.   For later use, we recall the semi-norm $|\cdot|$ and Sobolev norm $\|\cdot\|$ on the piecewise polynomial space,
\begin{align*}
|w|_{m,K}^2 &= \sum_{|\alpha|=m} (\partial_\alpha w,\partial_\alpha w)_{K}, 
&\|w\|_{m,K}^2 &= \sum_{|\alpha|\le m} (\partial_\alpha w,\partial_\alpha w)_{K}, \\
|w|_{m,\Omega_h}^2 &= \sum_{|\alpha|=m} (\partial_\alpha w,\partial_\alpha w)_{\Omega_h}, 
&\|w\|_{m,\Omega_h}^2 &= \sum_{|\alpha|\le m} (\partial_\alpha w,\partial_\alpha w)_{\Omega_h}.
\end{align*}
we also define the norm on the cell interfaces by
\begin{align*}
    \|w\|_{0,\mathcal{F}_h}^2&=\langle w,w\rangle_{\mathcal{F}_h}.
\end{align*}

We next introduce the projection-based bilinear form containing the homogeneous
symmetric boundary contributions.  For any \((w,\psi)\in W(h)\times W(h)\),
we define
\begin{align}
  a_h^B(w,\psi)
  &:=
    a_h^{\mathcal I}(w,\psi)
    +a_h^{\mathrm D}(w,\psi)
    +a_h^{\mathrm R}(w,\psi),
  \label{eq:ah-boundary-definition}
\end{align}
where
\begin{align}
 a_h^{\mathcal I}(w,\psi)
 :={}&(\kappa\nabla w,\nabla\psi)_{\Omega_h}
 +2\beta_0\left\langle
      \frac{\tilde{\kappa}}{\tilde{h}}\llbracket w\rrbracket,
      \llbracket\psi\rrbracket
   \right\rangle_{\mathcal F_h^{\mathcal I}}\notag\\
 &+\langle\{\kappa\nabla(\Pi_h w)\},\llbracket\psi\rrbracket
   \rangle_{\mathcal F_h^{\mathcal I}}
 +\langle\{\kappa\nabla(\Pi_h \psi)\},\llbracket w\rrbracket
   \rangle_{\mathcal F_h^{\mathcal I}}\notag\\
 &+\beta_1\langle\tilde{\kappa} \tilde{h}
       \llbracket\nabla^2(\Pi_h w)\rrbracket,\llbracket\psi\rrbracket
   \rangle_{\mathcal F_h^{\mathcal I}}
 +\beta_1\langle\tilde{\kappa} \tilde{h}
       \llbracket\nabla^2(\Pi_h \psi)\rrbracket,\llbracket w\rrbracket
   \rangle_{\mathcal F_h^{\mathcal I}},
 \label{eq:ah_interior}\\
 a_h^{\mathrm D}(w,\psi)
 :={}&2\beta_0\left\langle
       \frac{\tilde{\kappa}}{\tilde{h}}w,\psi
     \right\rangle_{\Gamma_{\mathrm D}}
 +\langle\kappa\nabla(\Pi_h w)\cdot\boldsymbol n^-,\psi
   \rangle_{\Gamma_{\mathrm D}}
 +\langle\kappa\nabla(\Pi_h \psi)\cdot\boldsymbol n^-,w
   \rangle_{\Gamma_{\mathrm D}},
 \label{eq:ah_dirichlet}\\
 a_h^{\mathrm R}(w,\psi)
 :={}&\langle\gamma_{\mathrm R}w,\psi\rangle_{\Gamma_{\mathrm R}}.
 \label{eq:ah_robin}
\end{align}
All face traces involving derivatives in
\eqref{eq:ah_interior}--\eqref{eq:ah_dirichlet} are therefore polynomial
traces.  Hence \(a_h^B\) is meaningful on \(W(h)\times W(h)\); when both
arguments belong to \(W_h\), \(\Pi_h\) is the identity and the definition
coincides with the original SDDG bilinear form.
There is no homogeneous boundary addition on $\Gamma_{\mathrm N}$.
The prescribed data remain in the linear functional
\begin{equation}
 \label{eq:ell-general}
 \begin{aligned}
 \ell_\Gamma(\psi;b)
 :={}&2\beta_0\left\langle
       \frac{\tilde{\kappa}}{\tilde{h}}\frac{b}{\beta},\psi
      \right\rangle_{\Gamma_{\mathrm D}}
 +\left\langle
       \kappa\nabla\psi\cdot\boldsymbol n^-,\frac{b}{\beta}
      \right\rangle_{\Gamma_{\mathrm D}}\\
 &+\left\langle\frac{b}{\alpha},\psi\right\rangle_{\Gamma_{\mathrm N}}
 +\left\langle\frac{b}{\alpha},\psi\right\rangle_{\Gamma_{\mathrm R}} .
 \end{aligned}
\end{equation}

We define the consistency residual with the sign of the uncancelled
projection defects.  For \(s>\frac12\) and any
\((w,\psi) \in H^{1+s}(\Omega) \times W(h)\), \(r_h^B(w;\psi)\) is defined as
\begin{equation}
 \label{eq:residual_def}
 \begin{aligned}
 r_h^B(w;\psi)
 :={}&\langle\{\kappa\nabla\eta_w\},\llbracket\psi\rrbracket
       \rangle_{\mathcal F_h^{\mathcal I}}
 +\beta_1\langle\tilde{\kappa} \tilde{h}
       \llbracket\nabla^2(\Pi_h w)\rrbracket,\llbracket\psi\rrbracket
       \rangle_{\mathcal F_h^{\mathcal I}}\\
 &+\langle\kappa\nabla\eta_w,\llbracket\psi\rrbracket
       \rangle_{\Gamma_{\mathrm D}},
 \end{aligned}
\end{equation}
where \(\eta_w:=\Pi_h w-w\).  With this sign convention, integration by
parts gives the primal consistency identity
\begin{equation}
\label{eq:primal-consistency-identity}
a_h^B(u,\psi)
=\bigl(-\nabla\cdot(\kappa\nabla u),\psi\bigr)_{\Omega_h}
+\ell_\Gamma(\psi;b)+r_h^B(u;\psi),
\qquad \forall \psi\in W_h.
\end{equation}
For later use, the homogeneous adjoint counterpart is also recorded here:
if \(z\in H^2(\Omega)\) satisfies the homogeneous boundary condition
associated with \(B\), namely \(z=0\) on \(\Gamma_{\mathrm D}\),
\(\kappa\nabla z\cdot\boldsymbol n=0\) on \(\Gamma_{\mathrm N}\),
\(\kappa\nabla z\cdot\boldsymbol n+\gamma_{\mathrm R}z=0\) on
\(\Gamma_{\mathrm R}\), and periodic identification when periodic
boundaries are used, then for every \(\chi\in W(h)\),
\begin{equation}
\label{eq:adjoint-consistency-identity}
a_h^B(\chi,z)
=\bigl(\chi,-\nabla\cdot(\kappa\nabla z)\bigr)_{\Omega_h}
+r_h^B(z;\chi).
\end{equation}

Consequently, for the exact solution $u,v$ of Eq.~\eqref{eq:system}, numerical approximation $u_h,v_h$ of Eq.~\eqref{eq:semidisc-bc2} and $\forall w \in W_h$,
\begin{flalign}
    \label{eq:ahu}
   &(\partial_t v,w)_{\Omega_h} + a_h^B(u,w) = (f,w)_{\Omega_h}-(g(u),w)_{\Omega_h} + \ell_\Gamma(w; b) + r_h^B(u;w),\\
   \label{eq:ahuh}
   &(\partial_t v_h,w)_{\Omega_h} + a_h^B(u_h,w)=(f,w)_{\Omega_h} - (g(u_h),w)_{\Omega_h} + \ell_\Gamma(w; b),
\end{flalign}
where \eqref{eq:ahu} follows from \eqref{eq:primal-consistency-identity} and
\eqref{eq:ahuh} follows from the definition of $a_h^B$.

Moreover, we will make use of the Bochner space 
\(L^p(J; \cdot)\), \(1 \leq p \leq \infty\),  equipped with different spatial norms. 
\begin{align*}
    \|w\|_{L^p(J; W(h))} ={}& \begin{cases}
\left( \int_J \|w\|_{h,B}^p \, \mathrm{d}t \right)^{1/p}, & \quad 1 \leq p < \infty, \\
\operatorname*{ess\,sup}\limits_{t \in J} \|w\|_{h,B}, & \quad p = \infty.
\end{cases}
\end{align*}
The $L^2$-norm $\|w\|_{L^p(J;L^2(\Omega_h))}$ and semi-norm variants
$|w|_{L^p(J;H^m(\Omega_h))}$ are defined analogously by replacing
$\|\cdot\|_{h,B}$ with $\|\cdot\|_{0,\Omega_h}$
and $|\cdot|_{m,\Omega_h}$ respectively.
We next recall several
approximation properties and standard inequalities that will be used throughout
the analysis.
\begin{lemma}[Approximation properties]
\label{lem:approx}
Let $s>\frac12$.  For any $w \in H^{1+s}(\Omega)$, there exists a 
constant $C_{\mathrm{proj}}>0$, independent of $h$ and $w$ such that for $0 \le m \le \lfloor 1+s \rfloor$,
\begin{equation}
    \label{eq:L2-proj-approx}
  |w - \Pi_h w|_{m,\Omega_h} 
  \le C_{\mathrm{proj}}\, h^{\min\{k,s\}+1-m}\, |w|_{1+s,\Omega_h}.
\end{equation}
We also use the lower-regularity $L^2$ estimate: for any
$q\in H^s(\Omega)$,
\begin{equation}
\label{eq:L2-proj-lower-regularity}
 \|q-\Pi_hq\|_{0,\Omega_h}
 \leq C_{\mathrm{proj}}\,
 h^{\min\{k+1,s\}}\|q\|_{H^s(\Omega)}.
\end{equation}
Taking \(m=0,1\) in \eqref{eq:L2-proj-approx} gives
\begin{subequations}
    \label{eq:proj_error}
\begin{align}
    \|\eta_w\|_{0,\Omega_h}
    &\le C_{\mathrm{proj}}\, h^{\min\{k,s\}+1}\, |w|_{1+s,\Omega_h},\\
    \|\nabla \eta_w\|_{0,\Omega_h} = |\eta_w|_{1,\Omega_h}
    &\le C_{\mathrm{proj}}\, h^{\min\{k,s\}}\, |w|_{1+s,\Omega_h},
\end{align}
\end{subequations}
When \(s\ge1\), take $m=2$, the additional elementwise estimate
\[
\|\nabla^2\eta_w\|_{0,\Omega_h}
\le C_{\mathrm{proj}}\, h^{\min\{k,s\}-1}\, |w|_{1+s,\Omega_h}
\]
also follows from \eqref{eq:L2-proj-approx}.
In addition, there exist constants $C_{\mathrm{tr},\Pi}>0$ and $C_A>0$,
independent of $h$ and $w$, such that
\begin{subequations}
\label{eq:projection_trace_estimates}
\begin{align}
    \label{eq:projection_jump_estimate}
    \| \llbracket \eta_w \rrbracket \|_{0,\mathcal{F}_h^{\mathcal I}}
    &\le C_{\mathrm{tr},\Pi}\,
    h^{\min\{k,s\}+\frac{1}{2}}\,|w|_{1+s,\Omega_h},\\
    \label{eq:projection_boundary_trace}
    \|\eta_w\|_{0,\Gamma_{\mathrm D}}
    +\|\eta_w\|_{0,\Gamma_{\mathrm R}}
    &\le C_{\mathrm{tr},\Pi}\,
    h^{\min\{k,s\}+\frac{1}{2}}\,|w|_{1+s,\Omega_h},\\
    \label{eq:projection_gradient_trace}
    \| \{\nabla\eta_w\} \|_{0,\mathcal{F}_h^{\mathcal I}}
    +\|\nabla\eta_w\cdot\boldsymbol n^-\|_{0,\Gamma_{\mathrm D}}
    &\le C_{\mathrm{tr},\Pi}\,
    h^{\min\{k,s\}-\frac{1}{2}}\,|w|_{1+s,\Omega_h},\\
    \label{eq:projection_energy_estimate}
    \|\eta_w\|_{h,B}
    &\le C_A\,h^{\min\{k,s\}}\,|w|_{1+s,\Omega_h},
\end{align}
\end{subequations}
If \(s>\frac32\), then the higher trace estimate
\begin{equation}
\label{eq:projection_hessian_trace}
\|\llbracket\nabla^2\eta_w\rrbracket\|_{0,\mathcal F_h^{\mathcal I}}
\le C_{\mathrm{tr},\Pi}\,
   h^{\min\{k,s\}-\frac{3}{2}}\,
   |w|_{1+s,\Omega_h}
\end{equation}
also holds.
The constants $C_{\mathrm{proj}}$ and $C_{\mathrm{tr},\Pi}$ may depend on
$d$, $k$, $s$, the shape-regularity constant, the uniformly bounded
number of faces per element, and $\gamma_{\mathrm{mesh}}$.  The constant
$C_A$ may additionally depend on $\kappa_{\max}$,
$\|\gamma_{\mathrm R}\|_{L^\infty(\Gamma_{\mathrm R})}$, and the penalty
parameter $\lambda$ in
\eqref{eq:boundary-energy-norm}.
\end{lemma}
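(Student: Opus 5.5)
The plan is to derive every estimate of Lemma~\ref{lem:approx} from two standard tools: the elementwise Bramble--Hilbert estimate for the local $L^2$ projection, and the scaled trace inequality. Throughout we use that $\Pi_h$ is the $L^2(\Omega)$-orthogonal projection onto $W_h$, which acts element by element because $W_h$ is fully discontinuous. Hence $(\Pi_h w)|_K$ is the $L^2(K)$-projection onto $S^k(K)$, and $\Pi_h$ reproduces every polynomial in $\mathbb P_k(K)\subset S^k(K)$.

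\textbf{Step 1: interior estimates.} For $w\in H^{1+s}(K)$, set $r=\min\{k,s\}$. We will show $|w-\Pi_h w|_{m,K}\le C h_K^{r+1-m}|w|_{1+s,K}$ for $m\le\lfloor 1+s\rfloor$.
\begin{itemize}
\item Map $K$ to a reference element $\hat K$ by the affine map (multilinear on quadrilaterals, where shape regularity controls the Jacobian).
\item Use that $I-\hat\Pi$ annihilates $\mathbb P_k$ and is bounded from $H^{1+s}(\hat K)$ to $H^m(\hat K)$.
\item Apply the Bramble--Hilbert/Deny--Lions lemma, in its fractional-order version when $s\notin\mathbb N$, and when $s>k$ bound the $H^{k+1}$ seminorm by the $H^{1+s}$ norm.
\item Scale back to $K$.
\end{itemize}
Summing over $K$ and using quasi-uniformity ($h\le\gamma_{\mathrm{mesh}}h_K$) gives \eqref{eq:L2-proj-approx}. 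The special cases $m=0,1,2$ follow immediately. The lower-regularity bound \eqref{eq:L2-proj-lower-regularity} is the same argument with $H^s$ in place of $H^{1+s}$ and $m=0$. Here we use $L^2$-stability of $\Pi_h$ to handle the regime $s<1$, where only the $\mathbb P_0$ invariance is needed.

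\textbf{Step 2: trace estimates.} Use the continuous scaled trace inequality on each $K$ and face $F\subset\partial K$:
$$\|\phi\|_{0,F}^2\le C\left(h_K^{-1}\|\phi\|_{0,K}^2+h_K^{2\tau-1}|\phi|_{\tau,K}^2\right),\qquad \tau\in(\tfrac12,1],$$
together with its integer version with $\tau=1$. Apply it as follows:
\begin{itemize}
\item With $\phi=\eta_w$, Step 1 gives $h^{-1/2}h^{r+1}$ from the first term and $h^{r}h^{1/2}$ from the second, i.e.\ order $h^{r+1/2}$. This yields \eqref{eq:projection_jump_estimate} and \eqref{eq:projection_boundary_trace}, since jumps are bounded by the traces on both sides.
\item With $\phi=\nabla\eta_w$, the same computation gives \eqref{eq:projection_gradient_trace}. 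Here a fractional $\tau=\min\{s,1\}$ is needed when $s<1$, which is exactly why $s>\tfrac12$ is required.
\item With $\phi=\nabla^2\eta_w$, we need $\nabla^2 w$ to have a trace, which forces $s>\tfrac32$; this gives \eqref{eq:projection_hessian_trace}.
\end{itemize}
Finite overlap of faces per element lets us sum the local bounds.

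\textbf{Step 3: energy norm.} Expand $\|\eta_w\|_{h,B}^2$ term by term:
\begin{itemize}
\item The volume term is at most $\kappa_{\max}\|\nabla\eta_w\|^2\lesssim h^{2r}$.
\item The jump term on $\mathcal F_h^{\mathrm A}$ is at most $\lambda^2\kappa_{\max}(c_{\tilde h}h)^{-1}h^{2r+1}$, using \eqref{eq:projection_jump_estimate} and \eqref{eq:projection_boundary_trace} on the Dirichlet faces, whose jump is just the trace.
\item The Robin term is at most $\|\gamma_{\mathrm R}\|_{L^\infty}h^{2r+1}$, which is of higher order.
\end{itemize}
This yields \eqref{eq:projection_energy_estimate}, with $C_A$ depending on the stated quantities.

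The main obstacle I expect is the fractional-regularity case $\tfrac12<s<1$, together with $s$ non-integer above $k$. There the Bramble--Hilbert step and the trace inequality must be used in Sobolev--Slobodeckij form, and one must verify that the constants depend only on $k$, $s$, $d$ and shape regularity. For quadrilaterals with $\mathbb Q_k$ there is the additional point that the bilinear map is non-affine; I would restrict to affine-equivalent (parallelogram) elements, or invoke standard results for asymptotically parallelogram meshes, and otherwise cite the standard references.
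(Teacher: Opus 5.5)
Your proposal is correct and follows essentially the same route as the paper's proof: elementwise approximation of the $L^2$ projection summed using quasi-uniformity, the scaled trace theorem on $H^{\tau}(K)$ with $\tau>\tfrac12$ applied to $\eta_w$, $\nabla\eta_w$ and, for $s>\tfrac32$, $\nabla^2\eta_w$, and then a term-by-term bound of $\|\eta_w\|_{h,B}$; you simply supply the Bramble--Hilbert and fractional-trace details that the paper leaves to a citation. The one thing to make explicit is that for $s>k$ your Step~1 yields the bound with the full norm $\|w\|_{1+s,\Omega_h}$, not the seminorm $|w|_{1+s,\Omega_h}$ you announced and the lemma states, and the seminorm version cannot hold there: for $w=x_1^{k+1}$ one has $|w|_{1+s,\Omega_h}=0$ while $w-\Pi_h w\neq 0$, so in that regime the trace and energy bounds must likewise be read with $\|w\|_{1+s,\Omega_h}$.
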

\begin{proof}
The estimates \eqref{eq:L2-proj-approx} and
\eqref{eq:L2-proj-lower-regularity} follow from standard approximation
properties of the $L^2$-orthogonal projection on shape-regular quasi-uniform
meshes by summing the element-wise estimates \cite{1978110}.

Applying the trace theorem for \(H^s(K)\), \(s>\frac12\), to
\(\eta_w\) and \(\nabla\eta_w\), using
\eqref{eq:L2-proj-approx}, and then using the quasi-uniformity
\(h_K\simeq h\), gives, for \(j=0,1\),
\begin{equation}
\label{eq:local-trace-projection}
\|\nabla^j\eta_w\|_{0,F}
\le C h^{\min\{k,s\}+1-j-\frac12}
|w|_{1+s,K},
\end{equation}
Summing
\eqref{eq:local-trace-projection} over faces and using shape regularity,
which controls the number of faces per element, gives
\eqref{eq:projection_jump_estimate}--\eqref{eq:projection_gradient_trace}.
When \(s>\frac32\), the same argument applied to \(\nabla^2\eta_w\) gives
\eqref{eq:projection_hessian_trace}; this higher trace estimate is only used
when the exact solution has a single-valued Hessian trace.

For the weighted DG seminorm defined above, using $\kappa\leq \kappa_{\max}$,
$\tilde{\kappa}\leq \kappa_{\max}$, and
$1/\tilde{h}\leq C/h$ on $\mathcal F_h^{\mathrm A}$ by face-scale
comparability and quasi-uniformity,
combining \eqref{eq:proj_error} with
\eqref{eq:projection_jump_estimate} and
\eqref{eq:projection_boundary_trace} gives
\eqref{eq:projection_energy_estimate}, after absorbing all fixed
coefficient, trace, and mesh-regularity constants into $C_A$.
\end{proof}
\begin{lemma}[Trace and inverse trace inequalities]
Let \(K\in\Omega_h\), \(F\subset\partial K\), and let \(v_h\in S^k(K)\).
Under the shape-regularity and face-scale comparability assumptions above,
there is a constant \(C_{\mathrm{tr}}\), independent of \(h\), such that
\begin{equation}
\label{eq:trace_inequality}
    \|v_h\|_{0,F}^2
    \le
    C_{\mathrm{tr}}\,\tilde{h}_F^{-1}\|v_h\|_{0,K}^2.
\end{equation}
Consequently, for any \(w\in W_h\),
\begin{equation}
\label{eq:global_inverse_trace}
    \|\tilde{h}^{1/2}\{w\}\|_{0,\mathcal F_h}
    \le C_{\mathrm{tr}}\|w\|_{0,\Omega_h},
    \qquad
    \|\tilde{h}^{1/2}\llbracket w\rrbracket\|_{0,\mathcal F_h}
    \le C_{\mathrm{tr}}\|w\|_{0,\Omega_h}.
\end{equation}
The same estimates hold on the active boundary faces.  The constants depend
only on \(k\), the shape-regularity constant, and the comparability constants
for \(\tilde{h}_F\); see, e.g.,
\cite{warburton2003constants,cangiani2021hp}.
\end{lemma}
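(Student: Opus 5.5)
The plan is the standard polynomial-space argument: prove the trace inequality \eqref{eq:trace_inequality} on a reference element, transfer it to a physical element by an affine (or bilinear, for quadrilaterals) map, and then sum over faces to get \eqref{eq:global_inverse_trace}.

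\textbf{Reference-element estimate.} Let $\hat K$ be the reference simplex or cube, $\hat F$ one of its faces, and $S^k(\hat K)$ the corresponding space $\mathbb P_k$ or $\mathbb Q_k$.
\begin{itemize}
\item The maps $\hat v\mapsto\|\hat v\|_{0,\hat F}$ and $\hat v\mapsto\|\hat v\|_{0,\hat K}$ are a seminorm and a norm on the finite-dimensional space $S^k(\hat K)$.
\item By equivalence of norms in finite dimensions, $\|\hat v\|_{0,\hat F}^2\le \hat C\|\hat v\|_{0,\hat K}^2$, with $\hat C$ depending only on $k$, $d$ and the reference geometry.
\item An explicit constant such as $(k+1)(k+d)/d$ is available from \cite{warburton2003constants}, but it is not needed.
\end{itemize}

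\textbf{Transfer to the physical element.} Let $T_K:\hat K\to K$ be the element map, with $\hat F$ mapped to $F$.
\begin{itemize}
\item Changing variables gives $\|v_h\|_{0,F}^2\simeq |F|\,|\hat F|^{-1}\|\hat v\|_{0,\hat F}^2$ and $\|v_h\|_{0,K}^2\simeq |K|\,|\hat K|^{-1}\|\hat v\|_{0,\hat K}^2$.
\item Hence $\|v_h\|_{0,F}^2\le C\,(|F|/|K|)\,\|v_h\|_{0,K}^2$.
\item Shape regularity gives $|F|/|K|\lesssim h_K^{-1}$. Quasi-uniformity, $h\le\gamma_{\mathrm{mesh}}h_K$, together with $\tilde h_F\le C_{\tilde h}h$, turns this into $h_K^{-1}\lesssim \tilde h_F^{-1}$. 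This proves \eqref{eq:trace_inequality}.
\end{itemize}

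\textbf{Main obstacle: quadrilaterals.} For $\mathbb Q_k$ on non-affine quadrilaterals the Jacobian is not constant, so the pullback of $v_h$ need not lie in a fixed finite-dimensional space with uniformly controlled Jacobian.
\begin{itemize}
\item I would handle this by assuming shape-regular bilinear maps, so that the Jacobian and its inverse are bounded above and below by multiples of $h_K^{d}$ and $h_K^{-1}$.
\item Since $\mathbb Q_k$ is defined through the map, $\hat v=v_h\circ T_K\in\mathbb Q_k(\hat K)$ still holds. The Jacobian bounds then yield the same two-sided comparisons as in the affine case.
\end{itemize}

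\textbf{Global estimates.} Take an interior face $F=\partial K^+\cap\partial K^-$.
\begin{itemize}
\item We have $|\{w\}|^2\le \tfrac12(|w^+|^2+|w^-|^2)$ and $|\llbracket w\rrbracket|^2\le 2(|w^+|^2+|w^-|^2)$.
\item Multiplying by $\tilde h_F$ and applying \eqref{eq:trace_inequality} on each side gives $\tilde h_F\|\{w\}\|_{0,F}^2+\tilde h_F\|\llbracket w\rrbracket\|_{0,F}^2\le C\,(\|w\|_{0,K^+}^2+\|w\|_{0,K^-}^2)$.
\item Summing over $F\in\mathcal F_h$, each element appears at most as many times as it has faces, and this number is uniformly bounded by shape regularity. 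This gives \eqref{eq:global_inverse_trace} after renaming the constant.
\item Boundary faces are treated the same way with a single trace. Periodic faces are handled after identification, with $\tilde h$ defined through the periodic displacement and still comparable to $h$.
\end{itemize}
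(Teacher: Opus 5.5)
Your proposal is correct. The paper gives no proof of this lemma; it only points to \cite{warburton2003constants,cangiani2021hp}. Your reference-element scaling followed by face-by-face summation is the standard argument behind those citations, and your bookkeeping for the global bounds is right: $|\{w\}|^2\le\frac12(|w^+|^2+|w^-|^2)$, $|\llbracket w\rrbracket|^2\le 2(|w^+|^2+|w^-|^2)$, a bounded face count, and $\tilde h_F\le C_{\tilde h}\gamma_{\mathrm{mesh}}h_K$.

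The only real difference from the cited route is how the element-level constant is obtained. Warburton--Hesthaven prove the explicit bound $\|v\|_{0,F}^2\le \frac{(k+1)(k+d)}{d}\frac{|F|}{|K|}\|v\|_{0,K}^2$ on any simplex, using orthogonal-polynomial expansions. The polytopic-mesh literature transfers this to more general elements through simplices with base $F$ contained in $K$. Your norm-equivalence argument gives only an unspecified constant. That suffices for every $h$-independence claim in the paper. However, an explicit value is what you would need to actually evaluate the sufficient penalty range \eqref{eq:ddg_param_range}, which contains $C_{\mathrm{tr},B}$.

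Two small remarks.

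\begin{itemize}
\item The paper defines $\mathbb Q_k(K)$ in physical coordinates, not through the element map. On a non-parallelogram quadrilateral, $v_h\circ T_K$ therefore lies in $\mathbb Q_{2k}(\hat K)$ rather than $\mathbb Q_k(\hat K)$, since under a bilinear map each $x^iy^j$ pulls back to degree at most $i+j$ in each reference variable. This is still a fixed finite-dimensional space, so your norm-equivalence step goes through with the $\mathbb Q_{2k}(\hat K)$ constant. The face scaling is exact because bilinear maps send edges to straight segments.
\item Your constant also depends on $\gamma_{\mathrm{mesh}}$, which the lemma's list of dependencies omits. This is harmless given the standing quasi-uniformity assumption.
\end{itemize}
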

\begin{lemma}[Polynomial inverse inequalities \cite{chen2013estimations}]
For all \(w\in W_h\) and integers \(0\le \ell\le m\le k+1\), there exists a
constant \(C_{\mathrm{inv}}\), independent of \(h\), such that, elementwise,
\[
    |w|_{H^m(K)}
    \le C_{\mathrm{inv}} h_K^{\ell-m}|w|_{H^\ell(K)} .
\]
In particular, on a quasi-uniform mesh,
\begin{equation}
\label{eq:inv_inequality}
    \|\nabla w\|_{0,\Omega_h}
\le \frac{C_{\mathrm{inv}}}{h}\, \|w\|_{0,\Omega_h},
\qquad
    \|\nabla^2 w\|_{0,\Omega_h}
\le \frac{C_{\mathrm{inv}}}{h}\, \|\nabla w\|_{0,\Omega_h}.
\end{equation}
\end{lemma}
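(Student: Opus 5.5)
The statement to prove is the polynomial inverse inequality. The plan is the standard scaling argument on each element, followed by a quasi-uniformity step to pass to the global bounds \eqref{eq:inv_inequality}.

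\textbf{Reference-element step.} Fix $K\in\Omega_h$ and let $F_K:\hat K\to K$ be the affine map from the reference element (for quadrilaterals, the bilinear map, which is affine for parallelograms). Set $\hat w := w\circ F_K\in S^k(\hat K)$. On the finite-dimensional space $S^k(\hat K)$, the seminorm $|\cdot|_{H^m(\hat K)}$ vanishes on $\mathbb{P}_{\ell-1}$, and so does $|\cdot|_{H^\ell(\hat K)}$. The key observation is that the kernel of $|\cdot|_{H^\ell}$, namely $\mathbb{P}_{\ell-1}\cap S^k$, is contained in the kernel of $|\cdot|_{H^m}$ because $m\ge \ell$. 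Therefore $|\cdot|_{H^m(\hat K)}$ is a seminorm on the quotient $S^k(\hat K)/\ker|\cdot|_{H^\ell}$, on which $|\cdot|_{H^\ell}$ is a norm. Equivalence of norms in finite dimensions then gives
\[
|\hat w|_{H^m(\hat K)}\le \hat C\,|\hat w|_{H^\ell(\hat K)},
\]
with $\hat C$ depending only on $k$, $d$ and $\hat K$.

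\textbf{Scaling back to $K$.} The usual chain-rule bounds give
\[
|w|_{H^m(K)}\le C\|B_K^{-1}\|^m|\det B_K|^{1/2}|\hat w|_{H^m(\hat K)},
\qquad
|\hat w|_{H^\ell(\hat K)}\le C\|B_K\|^\ell|\det B_K|^{-1/2}|w|_{H^\ell(K)}.
\]
Shape regularity gives $\|B_K\|\lesssim h_K$ and $\|B_K^{-1}\|\lesssim h_K^{-1}$. Combining these with the reference inequality yields $|w|_{H^m(K)}\le C_{\mathrm{inv}}h_K^{\ell-m}|w|_{H^\ell(K)}$.

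\textbf{Global bounds.} Take $(\ell,m)=(0,1)$ and $(\ell,m)=(1,2)$, square the elementwise inequalities and sum over $K$. Quasi-uniformity, $h\le\gamma_{\mathrm{mesh}}h_K$, gives $h_K^{-1}\le \gamma_{\mathrm{mesh}}h^{-1}$. Absorbing $\gamma_{\mathrm{mesh}}$ into $C_{\mathrm{inv}}$ yields \eqref{eq:inv_inequality}.

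\textbf{Main obstacle.} The delicate point is the quadrilateral case with $\mathbb{Q}_k$ on general, non-affine quadrilaterals. There the pullback is not polynomial of fixed degree, and the Jacobian derivatives enter the chain rule. I would handle this as follows: $\hat w$ ranges over a finite-dimensional space determined by $F_K$, and shape regularity bounds the bilinear map's second derivatives by $Ch_K^2$. A compactness argument over the (compact) family of normalized shape-regular quadrilaterals then gives a uniform $\hat C$. Alternatively, one can simply cite \cite{chen2013estimations} for this case.
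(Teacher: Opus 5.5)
The paper does not prove this lemma; it only cites \cite{chen2013estimations}. Your reference-element and scaling argument is the standard proof behind such a citation, and for intervals, triangles and affinely mapped elements it is correct as written. The containment $\ker|\cdot|_{H^\ell}\subset\ker|\cdot|_{H^m}$ is what makes the finite-dimensional comparison legitimate when $\ell\ge1$. The affine bounds $\|B_K\|\lesssim h_K$, $\|B_K^{-1}\|\lesssim h_K^{-1}$ produce $h_K^{\ell-m}$. Summing the squared bounds with $h_K^{-1}\le\gamma_{\mathrm{mesh}}h^{-1}$ then gives \eqref{eq:inv_inequality}. What deferring to a reference on these constants can add is a concrete value of $C_{\mathrm{inv}}$. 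For $(\ell,m)=(0,1)$ the optimal reference constant is the square root of the largest generalized eigenvalue of the reference stiffness matrix relative to the mass matrix. Such a value is what one would need to evaluate the range \eqref{eq:ddg_param_range}; your norm-equivalence step only gives existence.

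Your quadrilateral discussion needs two corrections, neither fatal.

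First, the paper defines $\mathbb{Q}_k(K)$ by degree in each physical coordinate, and this space is not affine-invariant. Under the shear $x=\hat x+\hat y$, $y=\hat y$ one gets $xy=\hat x\hat y+\hat y^2\notin\mathbb{Q}_1$. So already on a parallelogram $\hat w$ lies in $\mathbb{P}_{dk}(\hat K)$ rather than in $S^k(\hat K)$, but your kernel argument runs verbatim on that larger space. For the same reason you need neither the bilinear map nor compactness over quadrilateral shapes. Move the centre of an inscribed ball of radius $\rho_K\ge h_K/\sigma$ to the origin and dilate by $h_K^{-1}$. This preserves $\mathbb{Q}_k$ and gives $B(0,1/\sigma)\subset\tilde K\subset B(0,1)$. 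Hence, for all $q\in\mathbb{P}_{dk}$,
$|q|_{H^m(\tilde K)}\le|q|_{H^m(B(0,1))}\le\hat C\,|q|_{H^\ell(B(0,1/\sigma))}\le\hat C\,|q|_{H^\ell(\tilde K)}$,
with $\hat C$ depending only on $k$, $d$, $\sigma$. This handles all element types at once.

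Second, shape regularity does not bound the second derivatives of the bilinear map by $Ch_K^2$. Its mixed derivative is $a_1-a_2+a_3-a_4$, which is of size $h_K$ for a fixed-shape trapezoid; $O(h_K^2)$ is the asymptotically-parallelogram condition. Only the $O(h_K)$ bound is needed for the chain-rule scaling, so your approach survives, but the stated justification is wrong. For genuinely mapped $\mathbb{Q}_k$ you would additionally need the quadrilaterals to keep their angles uniformly away from $0$ and $\pi$.
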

\subsection{Properties of the boundary-dependent bilinear form}
\begin{lemma}[Continuity]
There exists a constant $C_{\mathrm{cont}}>0$, independent of $h$, such
that
\begin{equation}
    \label{eq:Continuity}
    |a_h^B(\psi,w)|
    \leq C_{\mathrm{cont}}\|\psi\|_{h,B}\|w\|_{h,B},
    \qquad \forall\psi,w\in W_h.
\end{equation}
\end{lemma}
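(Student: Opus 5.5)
Since $\psi,w\in W_h$, $\Pi_h$ acts as the identity, so $a_h^B$ reduces to the SDDG form: the volume term, the penalty terms on $\mathcal F_h^{\mathrm A}$, the average-gradient coupling terms, the Hessian-jump terms on $\mathcal F_h^{\mathcal I}$, and the Robin term. I would bound each group separately by Cauchy--Schwarz, in each case producing a product of a piece of $\|\psi\|_{h,B}$ and a piece of $\|w\|_{h,B}$, and take $C_{\mathrm{cont}}$ as the sum of the resulting constants.

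The volume term satisfies $|(\kappa\nabla\psi,\nabla w)_{\Omega_h}|\le\|\kappa^{1/2}\nabla\psi\|\,\|\kappa^{1/2}\nabla w\|$ directly. The penalty terms, weighted by $\tilde\kappa/\tilde h$ on $\mathcal F_h^{\mathcal I}$ and $\Gamma_{\mathrm D}$, are bounded by $2\beta_0\lambda^{-2}\|\psi\|_{h,B}\|w\|_{h,B}$. For the Robin term, $|\gamma_{\mathrm R}|$ is used as the weight, which gives exactly the last piece of the norm.

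For the coupling terms $\langle\{\kappa\nabla\psi\},\llbracket w\rrbracket\rangle_{\mathcal F_h^{\mathcal I}}$ and their $\Gamma_{\mathrm D}$ analogue, I would insert the weights $(\tilde h/\tilde\kappa)^{1/2}$ and $(\tilde\kappa/\tilde h)^{1/2}$. This gives the bound
\[
\|(\tilde h/\tilde\kappa)^{1/2}\{\kappa\nabla\psi\}\|_{0,\mathcal F_h}\;\lambda^{-1}\|w\|_{h,B}.
\]
Using $\kappa\le\tilde\kappa$ face-wise (the definition of $\tilde\kappa$ as a max) together with $\tilde\kappa\le\kappa_{\max}$, the first factor is at most $\kappa_{\max}^{1/2}\|\tilde h^{1/2}\{\nabla\psi\}\|$. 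The inverse trace inequality \eqref{eq:global_inverse_trace}, applied componentwise to $\nabla\psi\in W_h^d$, bounds this by $C_{\mathrm{tr}}\kappa_{\max}^{1/2}\|\nabla\psi\|_{0,\Omega_h}\le C_{\mathrm{tr}}(\kappa_{\max}/\kappa_{\min})^{1/2}\|\psi\|_{h,B}$. The symmetric counterpart is handled by swapping the roles of $\psi$ and $w$.

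The Hessian terms $\beta_1\langle\tilde\kappa\tilde h\llbracket\nabla^2\psi\rrbracket,\llbracket w\rrbracket\rangle$ are treated the same way, splitting $\tilde\kappa\tilde h=(\tilde\kappa\tilde h^3)^{1/2}(\tilde\kappa/\tilde h)^{1/2}$. The factor $\|\tilde h^{3/2}\llbracket\nabla^2\psi\rrbracket\|$ is controlled by $h\,\|\tilde h^{1/2}\llbracket\nabla^2\psi\rrbracket\|$ using $\tilde h\le C_{\tilde h}h$. By \eqref{eq:global_inverse_trace} this is at most $Ch\|\nabla^2\psi\|_{0,\Omega_h}$, and by \eqref{eq:inv_inequality} at most $C_{\mathrm{inv}}\|\nabla\psi\|_{0,\Omega_h}$. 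The powers of $h$ thus cancel exactly.

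I expect this Hessian-jump step to be the main technical point, since it is where the combined trace, inverse and face-scale comparability assumptions are all needed to balance the powers of $h$. Collecting the bounds gives \eqref{eq:Continuity} with $C_{\mathrm{cont}}$ depending only on $\beta_0,\beta_1,\lambda,\kappa_{\max}/\kappa_{\min},C_{\mathrm{tr}},C_{\mathrm{inv}},C_{\tilde h}$.
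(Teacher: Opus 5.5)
Your argument is the paper's proof: the same term-by-term Cauchy--Schwarz splitting, with weights $\tilde h/\tilde\kappa$ and $\tilde\kappa/\tilde h$ for the consistency terms, $\tilde\kappa\tilde h^3$ against $\tilde\kappa/\tilde h$ for the Hessian flux, the inverse trace inequality plus \eqref{eq:inv_inequality} to cancel the powers of $h$, and $|\gamma_{\mathrm R}|$ as the weight for the Robin term. One small correction: since $\kappa$ may jump across a face, $\tilde\kappa^{-1/2}|\{\kappa\nabla\psi\}|$ is not bounded by $\kappa_{\max}^{1/2}|\{\nabla\psi\}|$ (the average $\{\nabla\psi\}$ can vanish while $\{\kappa\nabla\psi\}$ does not), so bound it instead by $\tfrac12\kappa_{\max}^{1/2}(|\nabla\psi^+|+|\nabla\psi^-|)$ using $\kappa^\pm\le\tilde\kappa$, and then apply the elementwise trace inequality \eqref{eq:trace_inequality} on each side, which gives the same estimate.
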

\begin{proof}
We estimate the terms in
\eqref{eq:ah_interior}--\eqref{eq:ah_robin} directly.  The volume term
satisfies
\[
|(\kappa\nabla\psi,\nabla w)_{\Omega_h}|
\le \|\psi\|_{h,B}\|w\|_{h,B}.
\]
For the penalty terms, Cauchy--Schwarz in the weighted face inner product
gives
\[
\begin{aligned}
&2|\beta_0|\left|
\left\langle
\frac{\tilde{\kappa}}{\tilde{h}}\llbracket\psi\rrbracket,
\llbracket w\rrbracket
\right\rangle_{\mathcal F_h^{\mathcal I}}
+\left\langle
\frac{\tilde{\kappa}}{\tilde{h}}\psi,w
\right\rangle_{\Gamma_{\mathrm D}}
\right|\\
&\qquad\le
C\|\psi\|_{h,B}\|w\|_{h,B},
\end{aligned}
\]
because the active-face weighted terms are contained in
\eqref{eq:boundary-energy-norm}.  For the consistency term involving
\(\nabla\psi\), Cauchy--Schwarz with the weights
\(\tilde{h}/\tilde{\kappa}\) and \(\tilde{\kappa}/\tilde{h}\) yields
\[
\begin{aligned}
&\left|
\langle\{\kappa\nabla\psi\},\llbracket w\rrbracket
\rangle_{\mathcal F_h^{\mathcal I}}
+\langle\kappa\nabla\psi\cdot\boldsymbol n^-,w
\rangle_{\Gamma_{\mathrm D}}
\right|\\
&\qquad\le
\left(
\left\langle
\frac{\tilde{h}}{\tilde{\kappa}}\{\kappa\nabla\psi\},
\{\kappa\nabla\psi\}
\right\rangle_{\mathcal F_h^{\mathcal I}}
+\left\langle
\frac{\tilde{h}}{\tilde{\kappa}}\kappa\nabla\psi\cdot\boldsymbol n^-,
\kappa\nabla\psi\cdot\boldsymbol n^-
\right\rangle_{\Gamma_{\mathrm D}}
\right)^{1/2}\\
&\qquad\qquad\times
\left(
\left\langle
\frac{\tilde{\kappa}}{\tilde{h}}\llbracket w\rrbracket,
\llbracket w\rrbracket
\right\rangle_{\mathcal F_h^{\mathcal I}}
+\left\langle
\frac{\tilde{\kappa}}{\tilde{h}}w,w
\right\rangle_{\Gamma_{\mathrm D}}
\right)^{1/2}\\
&\qquad\le C\|\psi\|_{h,B}\|w\|_{h,B}.
\end{aligned}
\]
Here the first factor is bounded by the inverse trace inequality, and the
second factor is controlled by the penalty part of \(\|w\|_{h,B}\).  The
term
\[
\langle\{\kappa\nabla w\},\llbracket\psi\rrbracket
\rangle_{\mathcal F_h^{\mathcal I}}
+\langle\kappa\nabla w\cdot\boldsymbol n^-,\psi
\rangle_{\Gamma_{\mathrm D}}
\]
is estimated in the same way after interchanging \(w\) and \(\psi\).

For the Hessian flux, Cauchy--Schwarz gives
\[
\begin{aligned}
&|\beta_1|
\left|
\left\langle
\tilde{\kappa} \tilde{h}\llbracket\nabla^2\psi\rrbracket,
\llbracket w\rrbracket
\right\rangle_{\mathcal F_h^{\mathcal I}}
\right|\\
&\qquad\le
|\beta_1|
\left(
\sum_{F\in\mathcal F_h^{\mathcal I}}
\tilde{\kappa}(\tilde{h})^3
\|\llbracket\nabla^2\psi\rrbracket\|_{0,F}^2
\right)^{1/2}
\left(
\left\langle
\frac{\tilde{\kappa}}{\tilde{h}}\llbracket w\rrbracket,
\llbracket w\rrbracket
\right\rangle_{\mathcal F_h^{\mathcal I}}
\right)^{1/2}\\
&\qquad\le C|\beta_1|\|\psi\|_{h,B}\|w\|_{h,B},
\end{aligned}
\]
where the last step uses the inverse trace inequality and
\eqref{eq:inv_inequality}.  The other Hessian-flux term is identical with
\(\psi\) and \(w\) interchanged.  Finally,
\[
|\langle\gamma_{\mathrm R}\psi,w\rangle_{\Gamma_{\mathrm R}}|
\le
\langle|\gamma_{\mathrm R}|\psi,\psi\rangle_{\Gamma_{\mathrm R}}^{1/2}
\langle|\gamma_{\mathrm R}|w,w\rangle_{\Gamma_{\mathrm R}}^{1/2}
\le \|\psi\|_{h,B}\|w\|_{h,B}.
\]
Adding all estimates proves \eqref{eq:Continuity}.  The constant is
independent of \(h\).
\end{proof}
\begin{lemma}[Projection-defect estimate]
\label{lemma:projection-defect}
  Let \(s>\frac12\).  For any $w \in H^{1+s}(\Omega)$ and $\psi\in W_h$,
  \begin{equation}
  \label{eq:projection-defect-bound}
    |a_h^B(\Pi_h w-w,\psi)|
    \leq C_{\mathrm{cont}}^{\Pi}h^{\min\{s,k\}}
    |w|_{1+s,\Omega_h}\|\psi\|_{h,B}.
  \end{equation}
\end{lemma}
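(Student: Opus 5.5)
The key observation is that in \(a_h^B\) every derivative trace is applied to \(\Pi_h(\cdot)\). Since \(\Pi_h\) is idempotent, \(\Pi_h(\Pi_h w - w) = 0\). We therefore plan to insert \(\eta_w = \Pi_h w - w\) as the first argument in \eqref{eq:ah_interior}--\eqref{eq:ah_robin} and write out the surviving terms:
\[
\begin{aligned}
a_h^B(\eta_w,\psi)
={}&(\kappa\nabla\eta_w,\nabla\psi)_{\Omega_h}
+2\beta_0\Bigl\langle\frac{\tilde\kappa}{\tilde h}\llbracket\eta_w\rrbracket,\llbracket\psi\rrbracket\Bigr\rangle_{\mathcal F_h^{\mathcal I}}
+2\beta_0\Bigl\langle\frac{\tilde\kappa}{\tilde h}\eta_w,\psi\Bigr\rangle_{\Gamma_{\mathrm D}}\\
&+\langle\{\kappa\nabla\Pi_h\psi\},\llbracket\eta_w\rrbracket\rangle_{\mathcal F_h^{\mathcal I}}
+\langle\kappa\nabla\Pi_h\psi\cdot\boldsymbol n^-,\eta_w\rangle_{\Gamma_{\mathrm D}}
+\beta_1\langle\tilde\kappa\tilde h\llbracket\nabla^2\Pi_h\psi\rrbracket,\llbracket\eta_w\rrbracket\rangle_{\mathcal F_h^{\mathcal I}}
+\langle\gamma_{\mathrm R}\eta_w,\psi\rangle_{\Gamma_{\mathrm R}}.
\end{aligned}
\]
Because \(\psi\in W_h\), we have \(\Pi_h\psi=\psi\). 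The terms in which \(\nabla^j\Pi_h\eta_w\) appears vanish identically. This removes all gradient and Hessian traces of the non-polynomial function \(\eta_w\), which would otherwise cost powers \(h^{-1/2}\), \(h^{-3/2}\) and require \(s>\frac32\).

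Next, we bound each surviving term by Cauchy--Schwarz. The volume term, the penalty terms and the Robin term are bounded by \(C\|\eta_w\|_{h,B}\|\psi\|_{h,B}\), using the weights of the norm \eqref{eq:boundary-energy-norm} and \(\lambda>1\). Then \eqref{eq:projection_energy_estimate} gives \(C h^{\min\{k,s\}}|w|_{1+s,\Omega_h}\|\psi\|_{h,B}\).

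For the symmetric consistency terms we split the weights as \((\tilde h/\tilde\kappa)^{1/2}\) and \((\tilde\kappa/\tilde h)^{1/2}\), but with the roles reversed relative to the continuity proof. The factor \(\|(\tilde h)^{1/2}\{\kappa\nabla\psi\}\|\) is bounded by \(C\|\kappa^{1/2}\nabla\psi\|_{0,\Omega_h}\le C\|\psi\|_{h,B}\) through the inverse trace inequality \eqref{eq:global_inverse_trace} and \(\kappa\le\kappa_{\max}\), \(\tilde\kappa\ge\kappa_{\min}\). The factor \(\|(\tilde\kappa/\tilde h)^{1/2}\llbracket\eta_w\rrbracket\|\) is controlled by the jump part of \(\|\eta_w\|_{h,B}\), or directly by \eqref{eq:projection_jump_estimate}--\eqref{eq:projection_boundary_trace} together with \(\tilde h\simeq h\), which yields \(h^{-1/2}h^{\min\{k,s\}+1/2}\). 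The Dirichlet analogue is handled identically.

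For the Hessian term we write \(\tilde h\llbracket\nabla^2\psi\rrbracket=\tilde h^{3/2}\llbracket\nabla^2\psi\rrbracket\cdot\tilde h^{-1/2}\). We then apply the inverse trace inequality followed by \eqref{eq:inv_inequality} to get \(\|\tilde h^{3/2}\llbracket\nabla^2\psi\rrbracket\|_{0,\mathcal F_h^{\mathcal I}}\le C h\|\nabla^2\psi\|_{0,\Omega_h}\le C\|\nabla\psi\|_{0,\Omega_h}\), as in the continuity lemma. The remaining factor \(\|\tilde h^{-1/2}\llbracket\eta_w\rrbracket\|\) is bounded as before. Collecting all terms, with constants depending on \(\beta_0,\beta_1,\kappa_{\min},\kappa_{\max},\|\gamma_{\mathrm R}\|_{L^\infty}\), mesh constants and \(C_A,C_{\mathrm{tr},\Pi}\), gives \eqref{eq:projection-defect-bound}.

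The main point requiring care is the first step. One must confirm that the definition of \(a_h^B\) on \(W(h)\) really places \(\Pi_h\) on every derivative trace, so that those traces of \(\eta_w\) vanish rather than merely being small. With that established, only \(L^2\)-level traces of \(\eta_w\) are needed, and these are available for \(s>\frac12\). The only other subtlety is tracking that the factor involving \(\psi\) is always estimated by inverse inequalities back to \(\|\nabla\psi\|_{0,\Omega_h}\), never by a jump of \(\psi\).
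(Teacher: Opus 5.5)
Your proposal is correct and follows the paper's proof essentially step for step. You use the vanishing of every derivative face term that carries $\Pi_h\eta_w=0$, keep $\Pi_h\psi=\psi$ in the surviving consistency and Hessian terms, and bound everything by Cauchy--Schwarz, inverse (trace) inequalities on $\psi$, and the projection trace estimates for $\eta_w$; the only cosmetic difference is that you bundle the volume, penalty and Robin contributions into $\|\eta_w\|_{h,B}$ and invoke \eqref{eq:projection_energy_estimate}, whereas the paper bounds those terms one by one.
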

\begin{proof}
Let $\eta_w:=\Pi_h w-w$.  Expanding
\(a_h^B(\Pi_h w,\psi)-a_h^B(w,\psi)\) by
\eqref{eq:ah_interior}--\eqref{eq:ah_robin}, the first-argument derivative
face terms cancel because
\(\Pi_h\eta_w=0\).  The second-argument derivative face terms remain, since
\(\psi\in W_h\) and hence \(\Pi_h\psi=\psi\); they are controlled by inverse
trace estimates below.  The volume term is
\[
|(\kappa\nabla\eta_w,\nabla\psi)_{\Omega_h}|
\le C\|\nabla\eta_w\|_{0,\Omega_h}\|\psi\|_{h,B}
\le Ch^{\min\{s,k\}}|w|_{1+s,\Omega_h}\|\psi\|_{h,B}.
\]
For the penalty terms, the boundedness of $\tilde{\kappa}$ and
quasi-uniformity imply
\[
\begin{aligned}
&2|\beta_0|
\left|
\left\langle
\frac{\tilde{\kappa}}{\tilde{h}}\llbracket\eta_w\rrbracket,
\llbracket\psi\rrbracket
\right\rangle_{\mathcal F_h^{\mathcal I}}
+\left\langle
\frac{\tilde{\kappa}}{\tilde{h}}\eta_w,\psi
\right\rangle_{\Gamma_{\mathrm D}}
\right|\\
&\qquad\le
C\left(
h^{-1}\|\llbracket\eta_w\rrbracket\|_{0,\mathcal F_h^{\mathcal I}}^2
+h^{-1}\|\eta_w\|_{0,\Gamma_{\mathrm D}}^2
\right)^{1/2}\|\psi\|_{h,B}\\
&\qquad\le
Ch^{\min\{s,k\}}|w|_{1+s,\Omega_h}\|\psi\|_{h,B}.
\end{aligned}
\]
The symmetric consistency terms are estimated by applying the inverse trace inequality
to $\psi$:
\[
\begin{aligned}
&\left|
\langle\{\kappa\nabla\psi\},\llbracket\eta_w\rrbracket
\rangle_{\mathcal F_h^{\mathcal I}}
+\langle\kappa\nabla\psi\cdot\boldsymbol n^-,\eta_w
\rangle_{\Gamma_{\mathrm D}}
\right|\\
&\qquad\le
C\|\psi\|_{h,B}
\left(
h^{-1}\|\llbracket\eta_w\rrbracket\|_{0,\mathcal F_h^{\mathcal I}}^2
+h^{-1}\|\eta_w\|_{0,\Gamma_{\mathrm D}}^2
\right)^{1/2}\\
&\qquad\le
Ch^{\min\{s,k\}}|w|_{1+s,\Omega_h}\|\psi\|_{h,B}.
\end{aligned}
\]
The remaining Hessian flux is controlled by the inverse trace inequality
for $\psi$ and \eqref{eq:projection_jump_estimate}:
\[
|\beta_1|
\left|
\left\langle
\tilde{\kappa} \tilde{h}\llbracket\nabla^2\psi\rrbracket,
\llbracket\eta_w\rrbracket
\right\rangle_{\mathcal F_h^{\mathcal I}}
\right|
\le Ch^{\min\{s,k\}}|w|_{1+s,\Omega_h}\|\psi\|_{h,B}.
\]
Finally,
\[
|\langle\gamma_{\mathrm R}\eta_w,\psi\rangle_{\Gamma_{\mathrm R}}|
\le
C\|\eta_w\|_{0,\Gamma_{\mathrm R}}\|\psi\|_{h,B}
\le Ch^{\min\{s,k\}+1/2}|w|_{1+s,\Omega_h}\|\psi\|_{h,B}.
\]
For \(0<h\le1\), the last bound is also of order
\(h^{\min\{s,k\}}\).  Summing the
preceding estimates proves \eqref{eq:projection-defect-bound}.
\end{proof}
\begin{lemma}[Discrete G{\aa}rding inequality]
For a sufficiently large penalty parameter $\beta_0$, there exist
constants $c_{\mathrm G}>0$ and $C_{\mathrm G}\geq0$, independent of
$h$, such that
\begin{equation}
    \label{eq:garding}
    a_h^B(w,w)
    \geq c_{\mathrm G}\|w\|_{h,B}^2
       -C_{\mathrm G}\|w\|_{0,\Omega_h}^2,
    \qquad\forall w\in W_h.
\end{equation}
If $\gamma_{\mathrm R}^{-}=0$, one may take $C_{\mathrm G}=0$; hence
\begin{equation}
\label{eq:unshifted-coercivity}
a_h^B(w,w)\geq c_{\mathrm G}\|w\|_{h,B}^2,
\qquad\forall w\in W_h.
\end{equation}
Thus the original bilinear form is coercive in the energy seminorm for
periodic, Dirichlet, Neumann, and non-negative Robin conditions.  For
periodic and pure Neumann conditions, $\|\cdot\|_{h,B}$ has the constant
kernel $\mathbb R$, so \eqref{eq:unshifted-coercivity} is a seminorm
coercivity statement.
\end{lemma}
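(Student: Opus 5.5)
We expand $a_h^B(w,w)$ for $w\in W_h$ term by term, using \eqref{eq:ah_interior}--\eqref{eq:ah_robin} with $\Pi_h w=w$:
\[
a_h^B(w,w)=(\kappa\nabla w,\nabla w)_{\Omega_h}
+2\beta_0\Big\langle\tfrac{\tilde\kappa}{\tilde h}\llbracket w\rrbracket,\llbracket w\rrbracket\Big\rangle_{\mathcal F_h^{\mathrm A}}
+2T_1+2\beta_1T_2+\langle\gamma_{\mathrm R}w,w\rangle_{\Gamma_{\mathrm R}},
\]
where $T_1$ collects the consistency terms $\langle\{\kappa\nabla w\},\llbracket w\rrbracket\rangle_{\mathcal F_h^{\mathcal I}}+\langle\kappa\nabla w\cdot\boldsymbol n^-,w\rangle_{\Gamma_{\mathrm D}}$ and $T_2=\langle\tilde\kappa\tilde h\llbracket\nabla^2 w\rrbracket,\llbracket w\rrbracket\rangle_{\mathcal F_h^{\mathcal I}}$. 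The diffusion and penalty terms are already nonnegative and are exactly the first two pieces of $\|w\|_{h,B}^2$, up to the factor $2\beta_0/\lambda^2$ on the jump part. The strategy is the standard SIPG coercivity argument: bound the indefinite terms $T_1,T_2$ by $\epsilon$ times the volume energy plus $C/\epsilon$ times the weighted jumps, and absorb the latter using a large $\beta_0$.

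\textbf{Step 1.} As in the continuity proof, Cauchy--Schwarz with weights $\tilde h/\tilde\kappa$ and $\tilde\kappa/\tilde h$ gives
\[
|T_1|\le\Big(\Big\langle\tfrac{\tilde h}{\tilde\kappa}\{\kappa\nabla w\},\{\kappa\nabla w\}\Big\rangle+\cdots\Big)^{1/2}J(w)^{1/2},
\qquad J(w):=\Big\langle\tfrac{\tilde\kappa}{\tilde h}\llbracket w\rrbracket,\llbracket w\rrbracket\Big\rangle_{\mathcal F_h^{\mathrm A}}.
\]
Since $\kappa^2/\tilde\kappa\le\kappa$ (because $\tilde\kappa\ge\kappa$ face-wise), the inverse trace inequality \eqref{eq:trace_inequality} bounds the first factor by $C_{\mathrm{tr}}^{1/2}(\kappa\nabla w,\nabla w)^{1/2}_{\Omega_h}$. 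The one delicate point is that the trace inequality is applied to $\nabla w$ while $\kappa$ is only $L^\infty$. I would either assume $\kappa$ is piecewise constant, or pick up a factor $\kappa_{\max}/\kappa_{\min}$ in the constant. Young's inequality then gives
\[
2|T_1|\le\tfrac14(\kappa\nabla w,\nabla w)_{\Omega_h}+4C_{\mathrm{tr}}\tfrac{\kappa_{\max}}{\kappa_{\min}}\,J(w).
\]

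\textbf{Step 2.} For $T_2$, Cauchy--Schwarz gives
\[
|T_2|\le\Big(\sum_F\tilde\kappa\tilde h^3\|\llbracket\nabla^2 w\rrbracket\|_{0,F}^2\Big)^{1/2}J(w)^{1/2}.
\]
The inverse trace inequality contributes $\tilde h^{-1}$ and the inverse inequality \eqref{eq:inv_inequality} contributes $h^{-2}$, so with $\tilde h\simeq h$ the first factor is bounded by $C(\kappa\nabla w,\nabla w)^{1/2}$. Young's inequality gives $2|\beta_1||T_2|\le\tfrac14(\kappa\nabla w,\nabla w)+C\beta_1^2J(w)$. I expect this to be the main obstacle: the constant must be shown to be $h$-independent, with the powers of $\tilde h$ balancing exactly. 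As a result, the admissible $\beta_0$ threshold depends on $|\beta_1|$.

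\textbf{Step 3.} Combining the two bounds,
\[
a_h^B(w,w)\ge\tfrac12(\kappa\nabla w,\nabla w)+(2\beta_0-C_1-C_2\beta_1^2)J(w)+\langle\gamma_{\mathrm R}w,w\rangle_{\Gamma_{\mathrm R}}.
\]
Choose $\beta_0$ so that $2\beta_0-C_1-C_2\beta_1^2\ge\lambda^2/2$. Split $\gamma_{\mathrm R}=|\gamma_{\mathrm R}|-2\gamma_{\mathrm R}^-$. If $\gamma_{\mathrm R}^-=0$, this yields \eqref{eq:unshifted-coercivity} with $c_{\mathrm G}=1/2$. Otherwise, bound $2\langle\gamma_{\mathrm R}^-w,w\rangle_{\Gamma_{\mathrm R}}$ by the multiplicative trace inequality on boundary elements, $\|w\|_{0,\partial K}^2\le C(\|w\|_{0,K}\|\nabla w\|_{0,K}+h^{-1}\|w\|_{0,K}^2)$. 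The $h^{-1}$ term is not uniform, so I would instead use the global broken trace inequality $\|w\|_{0,\Gamma}^2\le C\|w\|_{0,\Omega}\big(\|\nabla w\|_{0,\Omega_h}^2+J(w)\big)^{1/2}+C\|w\|_{0,\Omega}^2$, which is valid for broken $H^1$ functions. Young's inequality then gives $\epsilon\|w\|_{h,B}^2+C_\epsilon\|w\|_{0,\Omega_h}^2$. Taking $\epsilon$ small yields \eqref{eq:garding} with $C_{\mathrm G}$ depending on $\|\gamma_{\mathrm R}^-\|_{L^\infty}$. The seminorm remark for periodic and Neumann conditions is immediate, since constants have zero energy norm.
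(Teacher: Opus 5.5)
Your proposal is correct and follows essentially the same route as the paper. You expand $a_h^B(w,w)$, bound the consistency and Hessian-flux terms with the inverse trace and inverse inequalities plus Young's inequality (picking up $\rho_\kappa=\kappa_{\max}/\kappa_{\min}$ and a $\beta_1^2$-dependent threshold for $\beta_0$), absorb the jump contributions with large $\beta_0$, and control the negative Robin part by a trace inequality with a small parameter. Your split $\gamma_{\mathrm R}=|\gamma_{\mathrm R}|-2\gamma_{\mathrm R}^-$ is slightly shorter bookkeeping than the paper's two-stage handling of $\gamma_{\mathrm R}^{\pm}$. Your use of an $h$-uniform broken trace inequality, rather than the elementwise one whose $h^{-1}$ term you rightly reject, supplies a justification the paper leaves implicit in its ``discrete trace inequality with an arbitrary parameter $\delta$''.
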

\begin{proof}
By \eqref{eq:ah_interior}--\eqref{eq:ah_robin},
\[
\begin{aligned}
a_h^B(w,w)
={}&(\kappa\nabla w,\nabla w)_{\Omega_h}
+2\beta_0
\left\langle
\frac{\tilde{\kappa}}{\tilde{h}}\llbracket w\rrbracket,
\llbracket w\rrbracket
\right\rangle_{\mathcal F_h^{\mathcal I}}\\
&+2\beta_0
\left\langle
\frac{\tilde{\kappa}}{\tilde{h}}w,w
\right\rangle_{\Gamma_{\mathrm D}}
+2\langle\{\kappa\nabla w\},\llbracket w\rrbracket
\rangle_{\mathcal F_h^{\mathcal I}}\\
&+2\langle\kappa\nabla w\cdot\boldsymbol n^-,w
\rangle_{\Gamma_{\mathrm D}}
+2\beta_1
\left\langle
\tilde{\kappa} \tilde{h}\llbracket\nabla^2w\rrbracket,
\llbracket w\rrbracket
\right\rangle_{\mathcal F_h^{\mathcal I}}\\
&+\langle\gamma_{\mathrm R}^{+}w,w\rangle_{\Gamma_{\mathrm R}}
-\langle\gamma_{\mathrm R}^{-}w,w\rangle_{\Gamma_{\mathrm R}} .
\end{aligned}
\]
With \(\rho_\kappa:=\kappa_{\max}/\kappa_{\min}\), the inverse trace
inequality and Young's inequality give, for any \(\varepsilon_1>0\),
\[
\begin{aligned}
&2\left|
\langle\{\kappa\nabla w\},\llbracket w\rrbracket
\rangle_{\mathcal F_h^{\mathcal I}}
+\langle\kappa\nabla w\cdot\boldsymbol n^-,w
\rangle_{\Gamma_{\mathrm D}}
\right|\\
&\qquad\le
\varepsilon_1(\kappa\nabla w,\nabla w)_{\Omega_h}
+\frac{\rho_\kappa C_{\mathrm{tr},B}^2}{\varepsilon_1}
\left[
\left\langle
\frac{\tilde{\kappa}}{\tilde{h}}\llbracket w\rrbracket,
\llbracket w\rrbracket
\right\rangle_{\mathcal F_h^{\mathcal I}}
+\left\langle
\frac{\tilde{\kappa}}{\tilde{h}}w,w
\right\rangle_{\Gamma_{\mathrm D}}
\right].
\end{aligned}
\]
The same argument, using \eqref{eq:inv_inequality} for the Hessian trace,
gives, for any \(\varepsilon_2>0\),
\[
\begin{aligned}
&2|\beta_1|
\left|
\left\langle
\tilde{\kappa} \tilde{h}\llbracket\nabla^2w\rrbracket,
\llbracket w\rrbracket
\right\rangle_{\mathcal F_h^{\mathcal I}}
\right|\\
&\qquad\le
\varepsilon_2(\kappa\nabla w,\nabla w)_{\Omega_h}
+\frac{4\beta_1^2\rho_\kappa C_{\mathrm{tr},B}^2C_{\mathrm{inv}}^2}
{\varepsilon_2}
\left\langle
\frac{\tilde{\kappa}}{\tilde{h}}\llbracket w\rrbracket,
\llbracket w\rrbracket
\right\rangle_{\mathcal F_h^{\mathcal I}} .
\end{aligned}
\]
Therefore, if \(\varepsilon_1+\varepsilon_2<1\), the gradient term and the
active-face penalty terms have positive remaining coefficients whenever
\begin{equation}
\label{eq:garding-parameter-condition}
  2\beta_0>
  \rho_\kappa C_{\mathrm{tr},B}^2
  \left(\frac{1}{\varepsilon_1}
  +\frac{4\beta_1^2C_{\mathrm{inv}}^2}{\varepsilon_2}\right),
\end{equation}
where $C_{\mathrm{tr},B}$ is the trace constant on the active interior
and Dirichlet faces.  Since \(\lambda\) is fixed, under
\eqref{eq:garding-parameter-condition} there is \(c_0>0\), independent of
\(h\), such that
\[
a_h^B(w,w)\ge
c_0\left[
(\kappa\nabla w,\nabla w)_{\Omega_h}
+\lambda^2\left\langle
\frac{\tilde{\kappa}}{\tilde{h}}\llbracket w\rrbracket,
\llbracket w\rrbracket
\right\rangle_{\mathcal F_h^{\mathrm A}}
+\langle\gamma_{\mathrm R}^{+}w,w\rangle_{\Gamma_{\mathrm R}}
\right]
-\langle\gamma_{\mathrm R}^{-}w,w\rangle_{\Gamma_{\mathrm R}} .
\]
It remains only to handle the negative Robin part.  The discrete trace
inequality with an arbitrary parameter $\delta>0$ gives
\begin{equation}
\label{eq:negative-robin-trace}
\langle\gamma_{\mathrm R}^{-}w,w\rangle_{\Gamma_{\mathrm R}}
\leq \delta\left[
(\kappa\nabla w,\nabla w)_{\Omega_h}
+\lambda^2\left\langle
\frac{\tilde{\kappa}}{\tilde{h}}\llbracket w\rrbracket,
\llbracket w\rrbracket
\right\rangle_{\mathcal F_h^{\mathrm A}}
\right]
+C_\delta\|w\|_{0,\Omega_h}^2 .
\end{equation}
Choose \(\delta<c_0/2\) and absorb the resulting
\((\kappa\nabla w,\nabla w)_{\Omega_h}\) and active-face penalty
contributions into the previous lower bound.  This gives
\[
\begin{aligned}
a_h^B(w,w)\ge
\frac{c_0}{2}\bigg[
(\kappa\nabla w,\nabla w)_{\Omega_h}
+\lambda^2\left\langle
\frac{\tilde{\kappa}}{\tilde{h}}\llbracket w\rrbracket,
\llbracket w\rrbracket
\right\rangle_{\mathcal F_h^{\mathrm A}}
+\langle\gamma_{\mathrm R}^{+}w,w\rangle_{\Gamma_{\mathrm R}}
\bigg]
-C_\delta\|w\|_{0,\Omega_h}^2 .
\end{aligned}
\]
This lower bound controls the positive part of the energy norm, but
\(\|w\|_{h,B}\) also contains the negative Robin trace through
\(|\gamma_{\mathrm R}|=\gamma_{\mathrm R}^{+}+\gamma_{\mathrm R}^{-}\).
Indeed, using \eqref{eq:negative-robin-trace} once more,
\[
\begin{aligned}
\|w\|_{h,B}^2
={}&
(\kappa\nabla w,\nabla w)_{\Omega_h}
+\lambda^2\left\langle
\frac{\tilde{\kappa}}{\tilde{h}}\llbracket w\rrbracket,
\llbracket w\rrbracket
\right\rangle_{\mathcal F_h^{\mathrm A}}\\
&\quad
+\langle\gamma_{\mathrm R}^{+}w,w\rangle_{\Gamma_{\mathrm R}}
+\langle\gamma_{\mathrm R}^{-}w,w\rangle_{\Gamma_{\mathrm R}}\\
&\le
\bigg[
(\kappa\nabla w,\nabla w)_{\Omega_h}
+\lambda^2\left\langle
\frac{\tilde{\kappa}}{\tilde{h}}\llbracket w\rrbracket,
\llbracket w\rrbracket
\right\rangle_{\mathcal F_h^{\mathrm A}}\\
&\qquad
+\langle\gamma_{\mathrm R}^{+}w,w\rangle_{\Gamma_{\mathrm R}}
\bigg]
+\delta\bigg[
(\kappa\nabla w,\nabla w)_{\Omega_h}
+\lambda^2\left\langle
\frac{\tilde{\kappa}}{\tilde{h}}\llbracket w\rrbracket,
\llbracket w\rrbracket
\right\rangle_{\mathcal F_h^{\mathrm A}}
\bigg]
\\
&\qquad
+C_\delta\|w\|_{0,\Omega_h}^2 .
\end{aligned}
\]
Since the positive Robin term is non-negative, this implies
\[
\begin{aligned}
\|w\|_{h,B}^2
&\le
(1+\delta)\bigg[
(\kappa\nabla w,\nabla w)_{\Omega_h}\\
&\qquad
+\lambda^2\left\langle
\frac{\tilde{\kappa}}{\tilde{h}}\llbracket w\rrbracket,
\llbracket w\rrbracket
\right\rangle_{\mathcal F_h^{\mathrm A}}
+\langle\gamma_{\mathrm R}^{+}w,w\rangle_{\Gamma_{\mathrm R}}
\bigg]
+C_\delta\|w\|_{0,\Omega_h}^2 ,
\end{aligned}
\]
and hence
\[
\begin{aligned}
&(\kappa\nabla w,\nabla w)_{\Omega_h}
+\lambda^2\left\langle
\frac{\tilde{\kappa}}{\tilde{h}}\llbracket w\rrbracket,
\llbracket w\rrbracket
\right\rangle_{\mathcal F_h^{\mathrm A}}\\
&\qquad
+\langle\gamma_{\mathrm R}^{+}w,w\rangle_{\Gamma_{\mathrm R}}\\
&\qquad\ge
\frac{1}{1+\delta}\|w\|_{h,B}^2
-\frac{C_\delta}{1+\delta}\|w\|_{0,\Omega_h}^2 .
\end{aligned}
\]
Substituting this lower bound into the preceding estimate for
\(a_h^B(w,w)\) gives
\[
\begin{aligned}
a_h^B(w,w)
&\ge
\frac{c_0}{2(1+\delta)}\|w\|_{h,B}^2
-\left(
C_\delta+\frac{c_0C_\delta}{2(1+\delta)}
\right)\|w\|_{0,\Omega_h}^2 .
\end{aligned}
\]
Thus \eqref{eq:garding} holds with, for instance,
\begin{equation}
  c_{\mathrm G}:=\frac{c_0}{2(1+\delta)},\quad
C_{\mathrm G}:=
C_\delta+\frac{c_0C_\delta}{2(1+\delta)} .
\end{equation}
If
\(\gamma_{\mathrm R}^{-}=0\), the trace-defect argument is unnecessary,
and the positive lower bound before \eqref{eq:negative-robin-trace} gives
\eqref{eq:unshifted-coercivity} with \(C_{\mathrm G}=0\).
\end{proof}
\begin{lemma}[Admissible range of $\beta_0,\beta_1$]
\label{lemma:ddg-param-range}
Based on the positive energy part of the G{\aa}rding estimate for
$a_h^B(\cdot,\cdot)$, a sufficient
range for the penalty parameters is
\begin{align}
\label{eq:ddg_param_range}
\beta_0>\frac12\rho_\kappa C_{\mathrm{tr},B}^2
\left(1+2|\beta_1|C_{\mathrm{inv}}\right)^2,
\qquad \rho_\kappa:=\frac{\kappa_{\max}}{\kappa_{\min}}.
\end{align}
No sign restriction on $\beta_1$ is needed for the shifted-projection
argument.  The range \eqref{eq:ddg_param_range} is used in the analysis as a
sufficient condition for the coercivity and projection estimates that yield the
stated convergence rates; it should not be interpreted as a necessary condition
for convergence.  In particular, the constants entering
\eqref{eq:ddg_param_range} arise from trace, inverse, and Young inequalities,
so the resulting lower bound is not expected to be sharp and may be stronger
than what is needed in practice.
\end{lemma}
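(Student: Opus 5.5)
The plan is to derive \eqref{eq:ddg_param_range} as a concrete instance of the abstract condition \eqref{eq:garding-parameter-condition} from the proof of the discrete Gårding inequality. That condition is
\[
2\beta_0>\rho_\kappa C_{\mathrm{tr},B}^2\Bigl(\tfrac1{\varepsilon_1}+\tfrac{4\beta_1^2C_{\mathrm{inv}}^2}{\varepsilon_2}\Bigr),
\]
subject to $\varepsilon_1+\varepsilon_2<1$. Under it, the gradient term and the active-face penalty terms in $a_h^B(w,w)$ keep positive coefficients. The task is therefore to minimise the right-hand side over admissible $(\varepsilon_1,\varepsilon_2)$ and show the infimum is the stated bound.

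First, set $a:=1$ and $b:=2|\beta_1|C_{\mathrm{inv}}$, so the bracket reads $a^2/\varepsilon_1+b^2/\varepsilon_2$. By Cauchy--Schwarz (equivalently, a Lagrange multiplier computation), on the closed constraint $\varepsilon_1+\varepsilon_2=1$ one has $a^2/\varepsilon_1+b^2/\varepsilon_2\ge(a+b)^2$. Equality holds at $\varepsilon_1=a/(a+b)$, $\varepsilon_2=b/(a+b)$.

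Since the constraint in the lemma is strict, I would argue by continuity. If $2\beta_0>\rho_\kappa C_{\mathrm{tr},B}^2(1+2|\beta_1|C_{\mathrm{inv}})^2$, pick $\varepsilon_i=t\,a/(a+b)$ and $t\,b/(a+b)$ with $t<1$ close enough to $1$. Then $\varepsilon_1+\varepsilon_2=t<1$ and the right-hand side equals $\rho_\kappa C_{\mathrm{tr},B}^2(a+b)^2/t$, which still lies below $2\beta_0$. Dividing by $2$ gives exactly \eqref{eq:ddg_param_range}. The case $\beta_1=0$ is handled by letting $\varepsilon_2\to0$, which is consistent with $b=0$.

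Finally, I would note why the sign of $\beta_1$ is irrelevant. In the Gårding proof the Hessian cross term $2\beta_1\langle\tilde\kappa\tilde h\llbracket\nabla^2w\rrbracket,\llbracket w\rrbracket\rangle$ is bounded in absolute value, so only $|\beta_1|$ enters the condition. The remaining coercivity constants $c_0$, and hence $c_{\mathrm G}$, then follow from the residual coefficients $1-\varepsilon_1-\varepsilon_2>0$ and $2\beta_0-\rho_\kappa C_{\mathrm{tr},B}^2(\cdots)>0$ together with the fixed $\lambda$, as in that proof.

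The main subtlety is bookkeeping rather than analysis. The factor $4\beta_1^2$ in the Hessian Young bound, combined with $1/\varepsilon_1$, must produce a perfect square $(1+2|\beta_1|C_{\mathrm{inv}})^2$. I would therefore recheck the inverse/trace step, $\tilde h^{3}\|\llbracket\nabla^2 w\rrbracket\|_F^2\lesssim C_{\mathrm{tr},B}^2C_{\mathrm{inv}}^2\,\tilde h\|\nabla w\|^2$, to confirm that the constant appears as $C_{\mathrm{inv}}$ and not $C_{\mathrm{inv}}^2$ inside the square root. If the constants came out differently, the bound would simply be restated with the same optimisation argument.
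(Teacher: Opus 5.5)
Your proposal is correct and matches how the paper obtains this range. The paper gives no separate proof, but \eqref{eq:ddg_param_range} is exactly \eqref{eq:garding-parameter-condition} evaluated at the optimal split $\varepsilon_1=1/(1+b)$, $\varepsilon_2=b/(1+b)$ with $b=2|\beta_1|C_{\mathrm{inv}}$. Your scaling by $t<1$ correctly handles the strict constraint $\varepsilon_1+\varepsilon_2<1$, which the paper glosses over, and your bookkeeping concern is resolved because the Hessian Young bound in the G{\aa}rding proof carries the factor $4\beta_1^2C_{\mathrm{inv}}^2=(2|\beta_1|C_{\mathrm{inv}})^2$.
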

\begin{lemma}[Shifted coercivity and strong monotonicity]
\label{lemma:shifted-coercivity}
Let $c_{\mathrm G}$ and $C_{\mathrm G}$ be the constants in
\eqref{eq:garding}.  Choose a fixed shift satisfying
\begin{equation}
\label{eq:shift-parameter-condition}
\mu>C_{\mathrm G}+L_g,
\end{equation}
and define
\begin{equation}
\label{eq:shifted-form}
a_{h,\mu}^B(w,\psi)
:=a_h^B(w,\psi)+\mu(w,\psi)_{\Omega_h},
\qquad
\|w\|_{h,\mu,B}^2
:=\|w\|_{h,B}^2+\mu\|w\|_{0,\Omega_h}^2.
\end{equation}
Then $a_{h,\mu}^B$ is coercive:
\begin{equation}
\label{eq:shifted-coercivity}
a_{h,\mu}^B(w,w)
\geq C_{\mathrm{shift}}\|w\|_{h,\mu,B}^2,
\qquad
C_{\mathrm{shift}}
:=\min\left\{c_{\mathrm G},
\frac{\mu-C_{\mathrm G}}{\mu}\right\}>0.
\end{equation}
Moreover, the nonlinear shifted operator is strongly monotone:
\begin{equation}
\label{eq:shifted-monotonicity}
\begin{aligned}
&a_{h,\mu}^B(p-q,p-q)
+(g(p)-g(q),p-q)_{\Omega_h}\\
&\qquad\geq
c_{\mathrm G}\|p-q\|_{h,B}^2
+(\mu-C_{\mathrm G}-L_g)\|p-q\|_{0,\Omega_h}^2 .
\end{aligned}
\end{equation}
\end{lemma}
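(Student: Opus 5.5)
The plan is to derive both claims directly from the Gårding inequality \eqref{eq:garding} and the Lipschitz bound \eqref{eq:g_lipschitz}. No new trace or inverse estimates are needed.

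\textbf{Shifted coercivity.} For $w\in W_h$, add $\mu\|w\|_{0,\Omega_h}^2$ to both sides of \eqref{eq:garding}:
\[
a_{h,\mu}^B(w,w)\ge c_{\mathrm G}\|w\|_{h,B}^2+(\mu-C_{\mathrm G})\|w\|_{0,\Omega_h}^2 .
\]
Since $\mu>C_{\mathrm G}+L_g\ge C_{\mathrm G}$, the second coefficient is positive. Write it as $\frac{\mu-C_{\mathrm G}}{\mu}\cdot\mu\|w\|_{0,\Omega_h}^2$. Both coefficients are then bounded below by $C_{\mathrm{shift}}=\min\{c_{\mathrm G},(\mu-C_{\mathrm G})/\mu\}$, which gives \eqref{eq:shifted-coercivity}. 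If $C_{\mathrm G}=0$, the second quotient equals one and the bound is trivially consistent.

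\textbf{Strong monotonicity.} Take $p,q\in W_h$, so that $p-q\in W_h$. Apply the previous display to $w=p-q$. The nonlinear term is bounded pointwise by Cauchy--Schwarz and \eqref{eq:g_lipschitz}:
\[
(g(p)-g(q),p-q)_{\Omega_h}\ge -\int_\Omega |g(p)-g(q)|\,|p-q|\,\mathrm d\boldsymbol x\ge -L_g\|p-q\|_{0,\Omega_h}^2 .
\]
Adding the two bounds yields \eqref{eq:shifted-monotonicity} with coefficient $\mu-C_{\mathrm G}-L_g$. This coefficient is strictly positive by \eqref{eq:shift-parameter-condition}, which justifies calling the operator strongly monotone.

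\textbf{Main point to check.} The only delicate issue is scope. The Gårding inequality is stated for $w\in W_h$, so the monotonicity should be asserted for $p,q\in W_h$. This is the setting in which it will be used, e.g.\ $p=u_h$ and $q=\Pi_h u$. If $W(h)$ arguments were intended, the face terms in $a_h^B$ involve $\Pi_h$ applied to the arguments. In that case one would need to verify that $a_h^B(w,w)$ for $w\notin W_h$ still satisfies \eqref{eq:garding}, which is not immediate. I would therefore restrict the statement to discrete arguments.
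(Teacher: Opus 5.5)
Your proposal is correct and matches the paper's proof essentially step for step: add $\mu\|w\|_{0,\Omega_h}^2$ to the G{\aa}rding inequality \eqref{eq:garding}, write the $L^2$ coefficient as $\frac{\mu-C_{\mathrm G}}{\mu}\cdot\mu$ to extract $C_{\mathrm{shift}}$, then add the resulting bound for $r=p-q$ to the pointwise Lipschitz estimate $(g(p)-g(q))r\ge -L_g r^2$. Your restriction of $w$, $p$, $q$ to $W_h$ is justified, because \eqref{eq:garding} is only proved on $W_h$; the paper glosses over this and later applies the same coercivity to $y_h=\partial_t w_h-v\notin W_h$.
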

\begin{proof}
Adding $\mu\|w\|_{0,\Omega_h}^2$ to both sides of
\eqref{eq:garding} gives
\[
a_{h,\mu}^B(w,w)
\geq c_{\mathrm G}\|w\|_{h,B}^2
+(\mu-C_{\mathrm G})\|w\|_{0,\Omega_h}^2,
\]
and hence
\[
a_{h,\mu}^B(w,w)
\ge
\min\left\{c_{\mathrm G},\frac{\mu-C_{\mathrm G}}{\mu}\right\}
\left(\|w\|_{h,B}^2+\mu\|w\|_{0,\Omega_h}^2\right).
\]
This is \eqref{eq:shifted-coercivity}.  Let \(r:=p-q\).  The Lipschitz
condition \eqref{eq:g_lipschitz} implies pointwise that
\((g(p)-g(q))r\ge -L_g r^2\), and therefore
\[
(g(p)-g(q),p-q)_{\Omega_h}
\geq-L_g\|p-q\|_{0,\Omega_h}^2.
\]
Combining this inequality with
\[
a_{h,\mu}^B(r,r)
\ge
c_{\mathrm G}\|r\|_{h,B}^2
+(\mu-C_{\mathrm G})\|r\|_{0,\Omega_h}^2
\]
proves \eqref{eq:shifted-monotonicity}.
\end{proof}
\begin{remark}[Shifted adjoint regularity]
\label{rem:adjoint-regularity}
For the optimal $L^2$ estimate, assume elliptic regularity for the shifted
adjoint operator.  More precisely, for every
$c\in L^\infty(\Omega)$ with $|c|\leq L_g$ and every
$\varphi\in L^2(\Omega)$, the problem
\begin{subequations}
\label{eq:shifted-adjoint-regularity}
\begin{equation}
 -\nabla\cdot(\kappa\nabla z)+(\mu+c)z=\varphi
\label{eq:shifted-adjoint-problem}
\end{equation}
is equipped with the homogeneous adjoint boundary condition matching the
boundary type used in the primal problem:
\begin{align}
z&=0 &&\text{on }\Gamma_{\mathrm D},\label{eq:shifted-adjoint-dirichlet}\\
\kappa\nabla z\cdot\boldsymbol n&=0
&&\text{on }\Gamma_{\mathrm N},\label{eq:shifted-adjoint-neumann}\\
\kappa\nabla z\cdot\boldsymbol n+\gamma_{\mathrm R}z&=0
&&\text{on }\Gamma_{\mathrm R},\label{eq:shifted-adjoint-robin}
\end{align}
and with periodic identification on periodic boundary pairs.  For each fixed
choice of these boundary parts, we assume that this problem has a solution
satisfying
\begin{equation}
 \|z\|_{2,\Omega}\leq C_S\|\varphi\|_{0,\Omega}.
\label{eq:shifted-adjoint-bound}
\end{equation}
\end{subequations}
\end{remark}
\begin{lemma}[Projection consistency and adjoint identity]
\label{lem:adjoint-consistency}
\label{lemma:C_R}
Let \(r_h^B\) be defined by \eqref{eq:residual_def}.  If \(\frac12<s\le1\) and
\(w\in H^{1+s}(\Omega)\), then
\begin{equation}
\label{eq:residual-low-regularity}
|r_h^B(w;\psi)|
\le C_{\mathrm R}h^s\|w\|_{1+s,\Omega_h}\|\psi\|_{h,B},
\qquad \forall \psi\in W(h).
\end{equation}
For the higher-order exact-solution estimate, if \(s>\frac32\),
\(w\in H^{1+s}(\Omega)\), and the Hessian of \(w\) has a single-valued
interface trace, then
\begin{subequations}
\label{eq:projection-consistency-remainders}
\begin{align}
\label{eq:residual_inequality}
|r_h^B(w;\psi)|
&\le C_{\mathrm R}h^{\min\{s,k\}}
|w|_{1+s,\Omega_h}\|\psi\|_{h,B},
\qquad \forall \psi\in W(h),\\
\label{eq:residual-adjoint-test}
|r_h^B(w;\Pi_h \zeta-\zeta)|
&\le C_{\mathrm R}h^{\min\{s,k\}+1}
|w|_{1+s,\Omega_h}\|\zeta\|_{2,\Omega_h},
\end{align}
\end{subequations}
for every \(\zeta\in H^2(\Omega)\) satisfying the homogeneous adjoint
Dirichlet condition when \(\Gamma_{\mathrm D}\neq\emptyset\). 
\end{lemma}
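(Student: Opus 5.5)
The plan is to bound the three pieces of $r_h^B(w;\psi)$ in \eqref{eq:residual_def} one at a time. For each piece I apply a weighted Cauchy--Schwarz inequality on faces, with weights $\tilde h/\tilde\kappa$ on the flux factor and $\tilde\kappa/\tilde h$ on the jump factor. The jump factor
\[
\Big\langle\tfrac{\tilde\kappa}{\tilde h}\llbracket\psi\rrbracket,\llbracket\psi\rrbracket\Big\rangle_{\mathcal F_h^{\mathrm A}}^{1/2}
\]
is at most $\lambda^{-1}\|\psi\|_{h,B}$ for every $\psi\in W(h)$. On $\Gamma_{\mathrm D}$ I read $\llbracket\psi\rrbracket$ as $\psi\boldsymbol n^-$. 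This needs no inverse inequality on $\psi$, so it works for $\psi\in W(h)$ and not only $\psi\in W_h$.

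For the two $\eta_w$-terms I use $\tilde\kappa\ge\kappa_{\min}$, $\kappa\le\kappa_{\max}$ and $\tilde h\le C_{\tilde h}h$. These give
\[
\big|\langle\{\kappa\nabla\eta_w\},\llbracket\psi\rrbracket\rangle_{\mathcal F_h^{\mathcal I}}+\langle\kappa\nabla\eta_w,\psi\boldsymbol n^-\rangle_{\Gamma_{\mathrm D}}\big|\le Ch^{1/2}\big(\|\{\nabla\eta_w\}\|_{0,\mathcal F_h^{\mathcal I}}+\|\nabla\eta_w\cdot\boldsymbol n^-\|_{0,\Gamma_{\mathrm D}}\big)\|\psi\|_{h,B}.
\]
By \eqref{eq:projection_gradient_trace} this is $Ch^{\min\{s,k\}}|w|_{1+s}\|\psi\|_{h,B}$, and for $s\le1\le k$ that equals $Ch^{s}$. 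The Hessian term is $\beta_1\langle\tilde\kappa\tilde h\llbracket\nabla^2\Pi_hw\rrbracket,\llbracket\psi\rrbracket\rangle$, and I bound it by $C h^{3/2}\|\llbracket\nabla^2\Pi_hw\rrbracket\|_{0,\mathcal F_h^{\mathcal I}}\|\psi\|_{h,B}$.

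The Hessian term is the main obstacle, and it is handled differently in the two regimes.
\begin{itemize}
\item If $s>\tfrac32$ and $\nabla^2 w$ has a single-valued trace, then $\llbracket\nabla^2 w\rrbracket=0$. Hence $\llbracket\nabla^2\Pi_hw\rrbracket=\llbracket\nabla^2\eta_w\rrbracket$, and \eqref{eq:projection_hessian_trace} gives order $h^{3/2}h^{\min\{s,k\}-3/2}=h^{\min\{s,k\}}$. This proves \eqref{eq:residual_inequality}.
\item If $\tfrac12<s\le1$, the Hessian of $w$ has no trace. I instead apply the inverse trace inequality \eqref{eq:trace_inequality} and then \eqref{eq:inv_inequality} to the polynomial $\nabla^2\Pi_hw$. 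This gives $\|\llbracket\nabla^2\Pi_hw\rrbracket\|_{0,\mathcal F_h}\le Ch^{-1/2}\|\nabla^2\Pi_hw\|_{0,\Omega_h}$.
\end{itemize}
In the second case I write $\nabla^2\Pi_hw=\nabla^2(\Pi_hw-I_hw)+\nabla^2 I_hw$, where $I_h$ is a continuous (or Scott--Zhang/local averaged) $H^{1+s}$-stable quasi-interpolant. For the first part I use the inverse inequality $h^{-1}$ on $\nabla(\Pi_hw-I_hw)$ together with the approximation bound $h^{s}$, which gives $h^{s-1}$. For the second part, elementwise $H^{1+s}$-stability combined with the fractional inverse estimate $|I_hw|_{2,K}\le Ch^{s-1}|I_hw|_{1+s,K}$ also gives $h^{s-1}$. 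Altogether the Hessian term is bounded by $Ch^{3/2}h^{-1/2}h^{s-1}\|w\|_{1+s}=Ch^{s}\|w\|_{1+s}$, which proves \eqref{eq:residual-low-regularity}. This fractional-order inverse step is the delicate point. If it resists, I would fall back on bounding $\|\nabla^2\Pi_hw\|_{0,K}$ directly via a local Bramble--Hilbert argument on the reference element.

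For \eqref{eq:residual-adjoint-test} I set $\psi=\Pi_h\zeta-\zeta$ and do not use the energy norm of $\psi$. Instead I pair the face factors with the trace bounds for $\zeta\in H^2$, taking $s=1$ in \eqref{eq:projection_jump_estimate}--\eqref{eq:projection_boundary_trace}. This gives $\|\llbracket\Pi_h\zeta-\zeta\rrbracket\|_{0,\mathcal F_h^{\mathcal I}}+\|\Pi_h\zeta-\zeta\|_{0,\Gamma_{\mathrm D}}\le Ch^{3/2}\|\zeta\|_2$, where the Dirichlet condition $\zeta=0$ on $\Gamma_{\mathrm D}$ makes the boundary jump equal to $\eta_\zeta$. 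Combining this with the flux-factor bounds $h^{\min\{s,k\}-1/2}$ from \eqref{eq:projection_gradient_trace}, and with $\tilde h\cdot h^{\min\{s,k\}-3/2}$ for the Hessian term, gives the product $h^{\min\{s,k\}-1/2}\,h^{3/2}=h^{\min\{s,k\}+1}$, as claimed.
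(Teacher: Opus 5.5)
Your proposal is correct and follows the paper's proof. It uses the same weighted Cauchy--Schwarz against only the penalty part of $\|\psi\|_{h,B}$, which is what allows $\psi\in W(h)$. It uses the same identity $\llbracket\nabla^2\Pi_h w\rrbracket=\llbracket\nabla^2\eta_w\rrbracket$ with \eqref{eq:projection_hessian_trace} when $s>\frac32$. For \eqref{eq:residual-adjoint-test} it makes the same replacement of the penalty factor by the $H^2$ projection trace bound $Ch^{3/2}\|\zeta\|_{2,\Omega_h}$.

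The one difference is the low-regularity Hessian term, where the paper directly uses what you list as your fallback. It subtracts an elementwise affine $p_K$, which $\Pi_h$ reproduces and whose Hessian vanishes; inverse estimates, the $L^2$-stability of $\Pi_h$ and Bramble--Hilbert then give $\|\nabla^2\Pi_h w\|_{0,K}\le Ch^{s-1}|w|_{1+s,K}$. Your quasi-interpolant and fractional inverse estimate are valid but unnecessary: you apply the inverse trace inequality before continuity of $I_h$ would ever matter.
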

\begin{proof}
Let \(\eta_w:=\Pi_h w-w\).  First assume \(s>\frac12\).  By Cauchy--Schwarz,
the first term in \eqref{eq:residual_def} satisfies
\[
\begin{aligned}
\left|
\langle\{\kappa\nabla\eta_w\},\llbracket\psi\rrbracket
\rangle_{\mathcal F_h^{\mathcal I}}
\right|
&\le
C\|\{\nabla\eta_w\}\|_{0,\mathcal F_h^{\mathcal I}}
h^{1/2}
\left(
\left\langle
\frac{\tilde{\kappa}}{\tilde{h}}\llbracket\psi\rrbracket,
\llbracket\psi\rrbracket
\right\rangle_{\mathcal F_h^{\mathcal I}}
\right)^{1/2}\\
&\le
Ch^{\min\{s,k\}}
|w|_{1+s,\Omega_h}\|\psi\|_{h,B}.
\end{aligned}
\]
Here we used \eqref{eq:projection_gradient_trace} and the penalty part of
\(\|\psi\|_{h,B}\).  For the Hessian flux, Cauchy--Schwarz first gives
\[
\begin{aligned}
&|\beta_1|
\left|
\left\langle
\tilde{\kappa} \tilde{h}\llbracket\nabla^2(\Pi_h w)\rrbracket,
\llbracket\psi\rrbracket
\right\rangle_{\mathcal F_h^{\mathcal I}}
\right|\\
&\qquad\le
C
\left(
\sum_{F\in\mathcal F_h^{\mathcal I}}
\tilde{\kappa}(\tilde{h})^3
\|\llbracket\nabla^2(\Pi_h w)\rrbracket\|_{0,F}^2
\right)^{1/2}
\left(
\left\langle
\frac{\tilde{\kappa}}{\tilde{h}}\llbracket\psi\rrbracket,
\llbracket\psi\rrbracket
\right\rangle_{\mathcal F_h^{\mathcal I}}
\right)^{1/2}\\
&\qquad\le
C
\left(
\sum_{F\in\mathcal F_h^{\mathcal I}}
\tilde{\kappa}(\tilde{h})^3
\|\llbracket\nabla^2(\Pi_h w)\rrbracket\|_{0,F}^2
\right)^{1/2}\|\psi\|_{h,B}.
\end{aligned}
\]
If \(\frac12<s\le1\), the inverse trace inequality for the polynomial
\(\Pi_h w\), local approximation by affine polynomials, and the \(L^2\)
stability of \(\Pi_h\) give
\[
\left(
\sum_{F\in\mathcal F_h^{\mathcal I}}
\tilde{\kappa}(\tilde{h})^3
\|\llbracket\nabla^2(\Pi_h w)\rrbracket\|_{0,F}^2
\right)^{1/2}
\le Ch^s\|w\|_{1+s,\Omega_h}.
\]
This is the low-regularity estimate used for the \(H^2\) adjoint case.
If \(w\) has a single-valued Hessian trace on interior faces, then
\[
\llbracket\nabla^2(\Pi_h w)\rrbracket
=\llbracket\nabla^2(\Pi_h w-w)\rrbracket
=\llbracket\nabla^2\eta_w\rrbracket ,
\]
and, when \(s>\frac32\), \eqref{eq:projection_hessian_trace} yields the
sharper bound
\[
\left(
\sum_{F\in\mathcal F_h^{\mathcal I}}
\tilde{\kappa}(\tilde{h})^3
\|\llbracket\nabla^2(\Pi_h w)\rrbracket\|_{0,F}^2
\right)^{1/2}
\le Ch^{\min\{s,k\}}|w|_{1+s,\Omega_h}.
\]
The
Dirichlet boundary term is bounded in the same way as the first term:
\[
\begin{aligned}
\left|
\langle\kappa\nabla\eta_w,\llbracket\psi\rrbracket
\rangle_{\Gamma_{\mathrm D}}
\right|
&\le
C\|\nabla\eta_w\cdot\boldsymbol n^-\|_{0,\Gamma_{\mathrm D}}
h^{1/2}
\left(
\left\langle
\frac{\tilde{\kappa}}{\tilde{h}}\llbracket\psi\rrbracket,
\llbracket\psi\rrbracket
\right\rangle_{\Gamma_{\mathrm D}}
\right)^{1/2}\\
&\le
Ch^{\min\{s,k\}}
|w|_{1+s,\Omega_h}\|\psi\|_{h,B}.
\end{aligned}
\]
Adding the three estimates proves \eqref{eq:residual-low-regularity} in the
case \(\frac12<s\le1\), and proves \eqref{eq:residual_inequality} in the
single-valued Hessian case.

If \(\psi=\Pi_h \zeta-\zeta\), the same estimates apply with the penalty
factor replaced by the \(H^2\) projection trace bound
\[
h^{-1/2}\|\llbracket \Pi_h \zeta-\zeta\rrbracket
\|_{0,\mathcal F_h^{\mathcal I}}
+h^{-1/2}\|\Pi_h \zeta-\zeta\|_{0,\Gamma_{\mathrm D}}
\le Ch\|\zeta\|_{2,\Omega_h}.
\]
On \(\Gamma_{\mathrm D}\), the homogeneous adjoint condition gives
\(\zeta=0\), so the boundary trace of
\(\Pi_h\zeta-\zeta\) is precisely its Dirichlet jump.  The first and
Dirichlet residual terms therefore gain one power of \(h\).  The Hessian
residual term gains the same factor because the single-valued Hessian
assumption gives the bound on
\(\llbracket\nabla^2(\Pi_h w)\rrbracket\) above, while the test factor is now
bounded by the displayed \(H^2\) projection trace estimate.  This proves
\eqref{eq:residual-adjoint-test}.  If \(\Gamma_{\mathrm D}=\emptyset\), the
last term in \eqref{eq:residual_def} vanishes.
\end{proof}
\begin{theorem}[Spatial error estimates]
  Under the preceding regularity, continuity, and G{\aa}rding assumptions,
  the error between the exact solution $(u,v)$ of
  \eqref{eq:system} and the SDDG solution $(u_h,v_h)$, with any of the
  boundary conditions considered above, satisfies
  \begin{align}
\label{eq:spatial-error-estimates}
      \|e_u\|_{L^\infty(J;L^2(\Omega))}
      &\leq C_Uh^{\min\{\sigma,k\}+1},&
      \|e_v\|_{L^\infty(J;L^2(\Omega))}
      &\leq C_Vh^{\min\{\sigma,k\}}.
  \end{align}
  For smooth solutions, the theorem yields
    \begin{align*}
        \|e_u\|_{L^\infty(J;L^2(\Omega))}
        &\leq C_Uh^{k+1},&
        \|e_v\|_{L^\infty(J;L^2(\Omega))}
        &\leq C_Vh^k.
  \end{align*}
\end{theorem}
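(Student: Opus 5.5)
The plan is an energy argument for the discrete errors $(\xi_u,\xi_v)$, followed by the triangle inequality with the projection bounds of Lemma~\ref{lem:approx}. The natural way to get the optimal $L^2$ rate for $u$ is a Wheeler-type elliptic projection rather than the plain $L^2$ projection. I would therefore modify the decomposition: define $P_hu(t)\in W_h$ by the shifted nonlinear elliptic problem
\[
a_{h,\mu}^B(P_hu,w)+(g(P_hu),w)_{\Omega_h}=a_{h,\mu}^B(u,w)+(g(u),w)_{\Omega_h}-r_h^B(u;w),\qquad \forall w\in W_h .
\]
This is well posed by the strong monotonicity \eqref{eq:shifted-monotonicity} (Browder--Minty in finite dimensions). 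Then I would write $e_u=(u_h-P_hu)+(P_hu-u)$ and keep $\xi_v=v_h-\Pi_hv$ for the velocity.

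\textbf{Step 1 (elliptic projection estimates).} Set $\rho:=P_hu-u$ and test with $w=P_hu-\Pi_hu$. Monotonicity, continuity \eqref{eq:Continuity}, the projection-defect Lemma~\ref{lemma:projection-defect}, Lipschitz continuity of $g$, and the residual bound \eqref{eq:residual_inequality} then give $\|\rho\|_{h,B}\le Ch^{\min\{\sigma,k\}}$.

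For the $L^2$ bound I would use an Aubin--Nitsche argument. Take $\varphi=\rho$ and $c=\int_0^1 g'(u+\tau\rho)\,d\tau$, so that $|c|\le L_g$, and let $z$ solve the adjoint problem of Remark~\ref{rem:adjoint-regularity}. Using \eqref{eq:adjoint-consistency-identity} and Galerkin orthogonality with test function $\Pi_hz$, one obtains
\[
\|\rho\|_0^2=a_{h,\mu}^B(\rho,z-\Pi_hz)+(c\rho,z-\Pi_hz)+r_h^B(z;\rho)-r_h^B(u;\Pi_hz-z)-r_h^B(u;z).
\]
Here $r_h^B(u;z)=0$, since $z$ is continuous and satisfies the Dirichlet condition. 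The remaining terms are controlled as follows: continuity and the $H^2$ approximation of $z$ handle the first term, the low-regularity estimate \eqref{eq:residual-low-regularity} with $s=1$ handles $r_h^B(z;\rho)$, and \eqref{eq:residual-adjoint-test} handles $r_h^B(u;\Pi_hz-z)$. This yields $\|\rho\|_0\le Ch^{\min\{\sigma,k\}+1}$. Repeating the argument for the time-differentiated problem gives $\|\partial_t\rho\|_0\le Ch^{\min\{\sigma,k\}+1}$, and this uses $v\in L^\infty(J;H^{1+\sigma})$ together with the Lipschitz continuity of $g'$ \eqref{eq:g'_lipschitz}.

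\textbf{Step 2 (error equations and energy).} Set $\theta:=u_h-P_hu$. Subtracting \eqref{eq:ahu} from \eqref{eq:ahuh} and using the definition of $P_hu$ gives, for all $w\in W_h$,
\[
(\partial_t\xi_v,w)+a_{h,\mu}^B(\theta,w)+(g(u_h)-g(P_hu),w)=\mu(\theta,w)-(\partial_t\eta_v,w),
\]
together with $(\partial_t\theta,w)=(\xi_v,w)+(\eta_v-\partial_t\rho,w)$, where the term $(\eta_v,w)$ vanishes by $L^2$-orthogonality. Take $w=\xi_v$ in the first identity, and use the symmetry of $a_{h,\mu}^B$ on $W_h$ (Theorem~\ref{thm:hamiltonian}) to write $a_{h,\mu}^B(\theta,\xi_v)=\tfrac12\tfrac{d}{dt}a_{h,\mu}^B(\theta,\theta)-a_{h,\mu}^B(\theta,-\partial_t\rho')$. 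Here $\rho'$ denotes the $L^2$-projected part; its contribution is bounded by inverse estimates applied to $\Pi_h\partial_t\rho$, giving an $O(h^{\min\{\sigma,k\}})$ factor in the energy norm.

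The energy $E:=\|\xi_v\|_0^2+a_{h,\mu}^B(\theta,\theta)$ is equivalent to $\|\xi_v\|_0^2+\|\theta\|_{h,\mu,B}^2$ by \eqref{eq:shifted-coercivity}. The $g$-term is bounded by $L_g\|\theta\|_0\|\xi_v\|_0$, and $\partial_t\eta_v$ is orthogonal to $W_h$ (or bounded via \eqref{eq:L2-proj-lower-regularity} with $\partial_tv\in L^1(J;H^\sigma)$). Gronwall's inequality then gives $\sup_t E^{1/2}\le C(E(0)^{1/2}+h^{\min\{\sigma,k\}})$, with $E(0)^{1/2}=O(h^{\min\{\sigma,k\}})$ by Step~1.

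\textbf{Step 3 (conclusion).} The velocity estimate follows at once from $\|e_v\|_0\le\|\xi_v\|_0+\|\eta_v\|_0$. For the displacement, I would not use the energy bound on $\theta$ directly, since it is only suboptimal. Instead I would integrate the first error equation tested with $\theta$; alternatively, I would rerun Gronwall on the weaker energy $\|\theta\|_0$, using $\partial_t\theta=\Pi_h\xi_v+\dots$ and noting that $\|\xi_v\|_0$ only enters through $(\xi_v,\theta)$.

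\textbf{Main obstacle.} I expect the main obstacle to be getting $h^{k+1}$ for $\theta$ in $L^2$, because a naive estimate only yields the rate $h^k$ inherited from $\xi_v$. The first thing I would try is the Baker-type time-integrated test function $w=\int_t^{t^*}\theta\,ds$ in the velocity equation. This trades $\xi_v$ for $\theta$ and, combined with the optimal $\|\rho\|_0$ and $\|\partial_t\rho\|_0$ from Step~1, should give $\|\theta\|_{L^\infty(L^2)}\le Ch^{\min\{\sigma,k\}+1}$.
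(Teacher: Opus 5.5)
Your proposal is correct, and its backbone is the paper's proof. Your $P_hu$ is exactly the shifted nonlinear projection $w_h$ of \eqref{eq:eval_w_h}. Your Aubin--Nitsche argument with the averaged coefficient $c$ and the shifted adjoint problem is Lemma~\ref{lemma:bound_w_h}, and the time-differentiated estimate is Lemma~\ref{lemma:bound_w_h_partial_t}. The Baker test function $\int_t^{\tau}(u_h-P_hu)\,ds$ followed by Gr\"{o}nwall is precisely how the paper gets the $h^{\min\{\sigma,k\}+1}$ displacement bound.

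Of the options you list in Step~3, only this last one works. Testing with $\theta$, or running Gr\"{o}nwall on $\|\theta\|_0$, feeds $\xi_v$ into the bound for $\theta$; since $\xi_v$ is only $O(h^{\min\{\sigma,k\}})$, the rate is capped there, so drop those two. Two small corrections:
\begin{itemize}
\item Your velocity error equation is missing the term $\mu(\rho,w)$ on the right. It is $O(h^{\min\{\sigma,k\}+1})$ and harmless; the paper's \eqref{eq:erru} carries it.
\item \eqref{eq:Continuity} is only stated on $W_h$. To bound $a_{h,\mu}^B(\rho,z-\Pi_hz)$, either split $\rho$ into its $W_h$ part and $\eta_u$ as the paper does, or invoke the elementwise $H^1$-stability of $\Pi_h$.
\end{itemize}

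You genuinely differ from the paper in the velocity estimate and in the energy part of Step~1.
\begin{itemize}
\item \textbf{Paper's velocity argument.} It uses the $L^2$-projection splitting $\xi_u=u_h-\Pi_hu$ with the unshifted form $a_h^B$. It integrates $a_h^B(\eta_u,\xi_v)$ and $r_h^B(u;\xi_v)$ by parts in time, which produces $r_h^B(v;\cdot)$ terms, and then applies G{\aa}rding. The defect $C_{\mathrm G}\|\xi_u\|_0^2$ is controlled by the already proved displacement bound, so the velocity estimate must come second.
\item \textbf{Your velocity argument.} You run the energy on $(\xi_v,\theta)$ with the coercive shifted form $a_{h,\mu}^B$. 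You absorb $\mu(\theta,\xi_v)$ and the nonlinearity by Gr\"{o}nwall, and treat $a_{h,\mu}^B(\theta,\Pi_h\partial_t\rho)$ by an inverse estimate plus the optimal bound on $\|\partial_t\rho\|_0$. This makes the velocity bound independent of the displacement bound and removes the G{\aa}rding-defect and time-integration-by-parts bookkeeping. The cost is using the differentiated-projection estimate already at this stage, which the paper needs anyway for the displacement.
\item \textbf{Step~1.} Testing the projection equation with $P_hu-\Pi_hu$ and applying \eqref{eq:shifted-monotonicity} with $q=\Pi_hu$ gives $\|\rho\|_{h,B}\le Ch^{\min\{\sigma,k\}}$ directly. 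The paper instead bounds the nonlinear term by $L_g\|u-w_h\|_0$. It then has to close the energy and $L^2$ estimates jointly, through duality and a smallness assumption on $h$.
\end{itemize}
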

\subsection{Error estimate of $u$}
\begin{lemma}
\label{lemma:bound_w_h}
  For an exact solution $u\in H^{1+\sigma}(\Omega)$, let $w_h\in W_h$ be the solution of 
  \begin{equation}
    \label{eq:eval_w_h}
    a_{h,\mu}^B(w_h,w) + (g(w_h),w)_{\Omega_h}
    = a_{h,\mu}^B(u,w) + (g(u),w)_{\Omega_h}
      - r_h^B(u;w),\forall w \in W_h,
  \end{equation}
  Then we have
\begin{equation}
\label{eq:nonlinear-projection-estimates}
  \|u-w_h\|_{h,B} \leq C_Eh^{\min\{\sigma,k\}}|u|_{1+\sigma,\Omega_h},
\quad
  \|u-w_h\|_{0,\Omega_h} \leq C_Lh^{\min\{\sigma,k\}+1}|u|_{1+\sigma,\Omega_h}.
\end{equation}
\end{lemma}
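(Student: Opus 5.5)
The energy-norm bound follows from a Céa-type argument driven by the strong monotonicity of Lemma~\ref{lemma:shifted-coercivity}. The $L^2$ bound then follows from an Aubin--Nitsche duality argument based on the shifted adjoint problem of Remark~\ref{rem:adjoint-regularity}. Existence and uniqueness of $w_h$ come from the same monotonicity: on the finite-dimensional space $W_h$, the map $w_h\mapsto a_{h,\mu}^B(w_h,\cdot)+(g(w_h),\cdot)_{\Omega_h}$ is continuous and strongly monotone, so the Browder--Minty theorem applies.

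\textbf{Energy estimate.} Write $u-w_h=(u-\Pi_hu)+(\Pi_hu-w_h)=-\eta_u-\xi$ with $\xi:=w_h-\Pi_hu\in W_h$. Subtracting the definition \eqref{eq:eval_w_h} gives the Galerkin-type orthogonality
\[
a_{h,\mu}^B(w_h-u,w)+(g(w_h)-g(u),w)_{\Omega_h}=-r_h^B(u;w),\qquad \forall w\in W_h .
\]
I will test with $w=\xi$ and split $w_h-u=\xi+\eta_u$. The monotonicity \eqref{eq:shifted-monotonicity}, applied with $p=w_h$ and $q=\Pi_hu$, bounds the left-hand side from below by $c_{\mathrm G}\|\xi\|_{h,B}^2+(\mu-C_{\mathrm G}-L_g)\|\xi\|_{0}^2$. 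The remaining terms go to the right-hand side:
\begin{itemize}
\item $a_h^B(\eta_u,\xi)$ is bounded by Lemma~\ref{lemma:projection-defect};
\item $\mu(\eta_u,\xi)$ is bounded by \eqref{eq:proj_error};
\item $(g(\Pi_hu)-g(u),\xi)$ is bounded by $L_g\|\eta_u\|_0\|\xi\|_0$;
\item $r_h^B(u;\xi)$ is bounded by \eqref{eq:residual_inequality}, which uses $\sigma>1+d/2>3/2$ and $u\in C^2$, so the Hessian trace is single-valued.
\end{itemize}
Young's inequality then gives $\|\xi\|_{h,B}+\|\xi\|_0\le Ch^{\min\{\sigma,k\}}|u|_{1+\sigma}$. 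Combined with \eqref{eq:projection_energy_estimate}, the triangle inequality yields the first bound.

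\textbf{$L^2$ estimate.} Let $e:=u-w_h$ and linearize $g(u)-g(w_h)=c\,e$ with $c:=\int_0^1 g'(w_h+\tau e)\,d\tau$, so that $|c|\le L_g$. Let $z$ solve \eqref{eq:shifted-adjoint-problem} with this $c$ and $\varphi=e$, so that $\|z\|_2\le C_S\|e\|_0$. The adjoint identity \eqref{eq:adjoint-consistency-identity} gives
\[
\|e\|_0^2=a_{h,\mu}^B(e,z)+(ce,z)-r_h^B(z;e).
\]
Insert $\Pi_hz$ and use the orthogonality with $w=\Pi_hz$. The result is
\[
\|e\|_0^2=a_{h,\mu}^B(e,z-\Pi_hz)+(ce,z-\Pi_hz)+r_h^B(u;\Pi_hz)-r_h^B(z;e).
\]
I will bound each term as follows.
\begin{itemize}
\item For the first term, the $H^2$ projection estimates give $\|z-\Pi_hz\|_{h,B}\le Ch\|z\|_2$. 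Its derivative face terms involve only $\Pi_h$-traces, and the Hessian terms vanish because $\Pi_h(z-\Pi_hz)=0$. The continuity argument, extended to $W(h)$, then bounds it by $Ch^{\min\{\sigma,k\}+1}$.
\item For $r_h^B(u;\Pi_hz)$, write $r_h^B(u;\Pi_hz)=r_h^B(u;\Pi_hz-z)+r_h^B(u;z)$. The first piece is bounded by \eqref{eq:residual-adjoint-test}. The second vanishes because $z$ is continuous with $z=0$ on $\Gamma_{\mathrm D}$, so all its jumps are zero.
\item For $r_h^B(z;e)$, the low-regularity bound \eqref{eq:residual-low-regularity} with $s=1$ gives $Ch\|z\|_2\|e\|_{h,B}$, which is $O(h^{\min\{\sigma,k\}+1})\|e\|_0$ by the energy estimate.
\end{itemize}
Dividing by $\|e\|_0$ finishes the proof.

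\textbf{Main obstacle.} The delicate step is the duality term $a_h^B(e,z-\Pi_hz)$. Here both arguments lie outside $W_h$, and the Hessian-jump face term $\langle\tilde\kappa\tilde h\llbracket\nabla^2\Pi_h e\rrbracket,\llbracket z-\Pi_hz\rrbracket\rangle$ must gain a full power of $h$. I would first try the inverse trace inequality on $\Pi_he$, together with the $h^{3/2}$ trace bound on $z-\Pi_hz$. The cost is the $L^2$-stability of $\Pi_h$ on $e$, which only yields $\|\nabla\Pi_he\|$-type control and must be checked against $\|e\|_{h,B}$.
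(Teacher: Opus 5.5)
Your argument is correct. The $L^2$ part is essentially the paper's duality proof: the same linearization, the same shifted adjoint $z$, the same identity after inserting $\Pi_hz$, and the same use of $r_h^B(u;z)=0$, \eqref{eq:residual-adjoint-test}, and \eqref{eq:residual-low-regularity} with $s=1$.

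The genuine difference is the energy estimate.
\begin{itemize}
\item The paper tests with $\delta_h=\Pi_hu-w_h$ and uses only the shifted coercivity \eqref{eq:shifted-coercivity}. It bounds the whole nonlinear term by $L_g\|e\|_{0,\Omega_h}\|\delta_h\|_{0,\Omega_h}$. This gives only the coupled bound \eqref{eq:discrete-projection-energy-bound}, which is closed after the duality step by absorbing $CL_gh\|e\|_{h,B}$ for sufficiently small $h$.
\item You instead split $g(w_h)-g(u)=[g(w_h)-g(\Pi_hu)]+[g(\Pi_hu)-g(u)]$. You absorb the first bracket through the strong monotonicity \eqref{eq:shifted-monotonicity} at $(p,q)=(w_h,\Pi_hu)$, which is exactly what the full shift condition $\mu>C_{\mathrm G}+L_g$ is for. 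The second bracket is a pure projection error.
\end{itemize}
Your route decouples the two estimates. The energy bound holds for all $h\le 1$, and you even get $\|\xi\|_{0,\Omega_h}\le Ch^{\min\{\sigma,k\}}|u|_{1+\sigma,\Omega_h}$. As a result, the $O(h^2\|e\|_{0,\Omega_h}\|z\|_{2,\Omega})$ terms in the duality identity can be bounded a priori rather than absorbed. Also, $\mu(\eta_u,\xi)_{\Omega_h}=0$ by $L^2$-orthogonality, so that bullet is unnecessary.

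The obstacle you flag in $a_h^B(e,z-\Pi_hz)$ disappears once you note that $\Pi_he=\Pi_hu-w_h=-\xi$, because $w_h\in W_h$.
\begin{itemize}
\item The surviving face terms pair $\{\kappa\nabla\xi\}$ and $\tilde h\llbracket\nabla^2\xi\rrbracket$ with $\llbracket z-\Pi_hz\rrbracket$, where $\|\llbracket z-\Pi_hz\rrbracket\|_{0,\mathcal F_h^{\mathcal I}}\le Ch^{3/2}\|z\|_{2,\Omega}$.
\item Inverse trace together with \eqref{eq:inv_inequality} bounds these terms by $Ch\|\nabla\xi\|_{0,\Omega_h}\|z\|_{2,\Omega}$, and your first step already controls $\|\nabla\xi\|_{0,\Omega_h}$.
\item Going through the $L^2$-stability of $\Pi_h$ instead would lose the factor $h$.
\end{itemize}
The paper does the same thing in different packaging. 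It writes $e=\delta_h-\eta_u$ and handles $a_h^B(\delta_h,z-\Pi_hz)$ by symmetry and Lemma~\ref{lemma:projection-defect} with $w=z$. It then observes that all $\Pi_h$-derivative terms vanish in $a_h^B(\eta_u,z-\Pi_hz)$.
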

\begin{proof}
The operator on the left-hand side of \eqref{eq:eval_w_h} is strongly
monotone by \eqref{eq:shifted-monotonicity}; hence $w_h$ exists uniquely.
Set
\[
  e:=u-w_h,\qquad
  \delta_h:=\Pi_hu-w_h.
\]
By the triangle inequality and \eqref{eq:projection_energy_estimate},
\begin{equation}
\label{eq:projection-triangle-h}
\|e\|_{h,B}
\leq \|\eta_u\|_{h,B}+\|\delta_h\|_{h,B}
\leq C_Ah^{\min\{\sigma,k\}}|u|_{1+\sigma,\Omega_h}
+\|\delta_h\|_{h,B}.
\end{equation}
It remains first to estimate \(\delta_h\).  From
\eqref{eq:eval_w_h}, tested with \(\delta_h\), we have
\[
\begin{aligned}
a_{h,\mu}^B(\delta_h,\delta_h)
={}&a_{h,\mu}^B(\eta_u,\delta_h)
+(g(w_h)-g(u),\delta_h)_{\Omega_h}
+r_h^B(u;\delta_h).
\end{aligned}
\]
We now estimate the three terms on the right-hand side explicitly.  By
\eqref{eq:shifted-coercivity},
\[
C_{\mathrm{shift}}\|\delta_h\|_{h,\mu,B}^2
\le a_{h,\mu}^B(\delta_h,\delta_h).
\]
For the first term, the shifted form gives
\[
\begin{aligned}
|a_{h,\mu}^B(\eta_u,\delta_h)|
&\le |a_h^B(\eta_u,\delta_h)|
   +\mu\|\eta_u\|_{0,\Omega_h}\|\delta_h\|_{0,\Omega_h}\\
&\le C_{\mathrm{cont}}^{\Pi}
h^{\min\{\sigma,k\}}|u|_{1+\sigma,\Omega_h}
\|\delta_h\|_{h,B}
+ C h^{\min\{\sigma,k\}+1}|u|_{1+\sigma,\Omega_h}
\|\delta_h\|_{h,\mu,B}\\
&\le
C C_{\mathrm{cont}}^{\Pi}
h^{\min\{\sigma,k\}}|u|_{1+\sigma,\Omega_h}
\|\delta_h\|_{h,\mu,B}.
\end{aligned}
\]
Here the second line uses the projection-defect estimate
\eqref{eq:projection-defect-bound} for the unshifted form,
\eqref{eq:proj_error} for
\(\|\eta_u\|_{0,\Omega_h}\), and
\(\|\delta_h\|_{h,B}\le\|\delta_h\|_{h,\mu,B}\); in the last line we use
\(h\le 1\) and the fixed value of \(\mu\), enlarging
\(C_{\mathrm{cont}}^{\Pi}\), if necessary, to include this fixed mass
contribution.  The nonlinear term is bounded by the Lipschitz continuity of
\(g\):
\[
\begin{aligned}
|(g(w_h)-g(u),\delta_h)_{\Omega_h}|
&\le L_g\|w_h-u\|_{0,\Omega_h}
       \|\delta_h\|_{0,\Omega_h}\\
&=L_g\|e\|_{0,\Omega_h}\|\delta_h\|_{0,\Omega_h}
\le C L_g\|e\|_{0,\Omega_h}\|\delta_h\|_{h,\mu,B}.
\end{aligned}
\]
The residual term is controlled by Lemma~\ref{lemma:C_R}:
\[
|r_h^B(u;\delta_h)|
\le C_{\mathrm R}h^{\min\{\sigma,k\}}|u|_{1+\sigma,\Omega_h}
\|\delta_h\|_{h,B}
\le C_{\mathrm R}h^{\min\{\sigma,k\}}|u|_{1+\sigma,\Omega_h}
\|\delta_h\|_{h,\mu,B}.
\]
Combining these three bounds with the identity above gives
\[
\begin{aligned}
C_{\mathrm{shift}}\|\delta_h\|_{h,\mu,B}^2
&\le
C\left(C_{\mathrm{cont}}^{\Pi}+C_{\mathrm R}\right)
h^{\min\{\sigma,k\}}|u|_{1+\sigma,\Omega_h}
\|\delta_h\|_{h,\mu,B}\\
&\quad
+CL_g\|e\|_{0,\Omega_h}\|\delta_h\|_{h,\mu,B}.
\end{aligned}
\]
If \(\delta_h=0\), the desired estimate is immediate.  Otherwise, dividing
by \(\|\delta_h\|_{h,\mu,B}\), using
\(\|\delta_h\|_{h,B}\le\|\delta_h\|_{h,\mu,B}\), and absorbing
\(C_{\mathrm{shift}}^{-1}\) into the generic constant yields
\begin{equation}
\label{eq:nonlinear-projection-energy-identity}
\begin{aligned}
\|\delta_h\|_{h,B}
&\leq
C\left(C_{\mathrm{cont}}^{\Pi}+C_{\mathrm R}\right)
h^{\min\{\sigma,k\}}|u|_{1+\sigma,\Omega_h}
+CL_g\|e\|_{0,\Omega_h}.
\end{aligned}
\end{equation}
Substitution in \eqref{eq:projection-triangle-h} gives
\begin{equation}
\label{eq:discrete-projection-energy-bound}
\|e\|_{h,B}
\leq
C h^{\min\{\sigma,k\}}|u|_{1+\sigma,\Omega_h}
+CL_g\|e\|_{0,\Omega_h}.
\end{equation}

We now prove the \(L^2\) estimate by duality.  Define the averaged derivative
\[
c_e(x):=\int_0^1
g'\bigl(u(x)+\theta(w_h(x)-u(x))\bigr)\,d\theta .
\]
Then \(c_e\in L^\infty(\Omega)\), \(|c_e|\le L_g\), and the fundamental
theorem of calculus gives
\[
g(w_h)-g(u)=-c_e e .
\]
Let \(z\) solve the shifted adjoint problem
\[
 -\nabla\cdot(\kappa\nabla z)+(\mu+c_e)z=e
\]
with the homogeneous boundary condition associated with \(B\).  By
\eqref{eq:shifted-adjoint-bound},
\[
\|z\|_{2,\Omega_h}\leq C_S\|e\|_{0,\Omega_h}.
\]
Since \(z\) is single-valued and satisfies the homogeneous adjoint boundary
condition, \(r_h^B(u;z)=0\).  Moreover,
\eqref{eq:adjoint-consistency-identity} gives
\[
\|e\|_{0,\Omega_h}^2
=a_{h,\mu}^B(e,z)+(c_e e,z)_{\Omega_h}-r_h^B(z;e).
\]
Splitting \(z=(z-\Pi_h z)+\Pi_h z\) and using \eqref{eq:eval_w_h} with
\(\Pi_h z\), together with \(g(w_h)-g(u)=-c_e e\), gives the exact identity
\begin{equation}
\label{eq:nonlinear-projection-duality}
\begin{aligned}
\|e\|_{0,\Omega_h}^2
={}&a_{h,\mu}^B(e,z-\Pi_h z)
+(g(w_h)-g(u),\Pi_h z-z)_{\Omega_h}\\
{}&+r_h^B(u;\Pi_h z-z)
-r_h^B(z;e).
\end{aligned}
\end{equation}
We bound the terms on the right-hand side one by one.  For the first term,
split \(e=\delta_h-\eta_u\), where
\(\delta_h=\Pi_hu-w_h\in W_h\) and \(\eta_u=\Pi_hu-u\).  Since \(a_h^B\)
is symmetric and \(z-\Pi_hz=-(\Pi_hz-z)\),
\eqref{eq:projection-defect-bound} with \(w=z\) gives
\[
|a_h^B(\delta_h,z-\Pi_hz)|
=|a_h^B(\Pi_hz-z,\delta_h)|
\le Ch\|z\|_{2,\Omega}\|\delta_h\|_{h,B}.
\]
For the remaining unshifted part, note that
\(\Pi_h\eta_u=0\) and \(\Pi_h(z-\Pi_hz)=0\).  Hence all consistency terms in
\eqref{eq:ah_interior}--\eqref{eq:ah_dirichlet} which contain
\(\nabla(\Pi_h\cdot)\) or \(\nabla^2(\Pi_h\cdot)\) vanish, and only the
volume, penalty, and Robin terms remain.  Cauchy--Schwarz and
\eqref{eq:projection_energy_estimate} therefore give
\[
|a_h^B(\eta_u,z-\Pi_hz)|
\le C\|\eta_u\|_{h,B}\|z-\Pi_hz\|_{h,B}
\le Ch^{\min\{\sigma,k\}+1}|u|_{1+\sigma,\Omega_h}\|z\|_{2,\Omega}.
\]
The shifted mass contribution satisfies
\[
\mu|(e,z-\Pi_hz)_{\Omega_h}|
\le Ch^2\|e\|_{0,\Omega_h}\|z\|_{2,\Omega}.
\]
Combining these bounds with
\(\|\delta_h\|_{h,B}\le\|e\|_{h,B}+\|\eta_u\|_{h,B}\) and
\eqref{eq:projection_energy_estimate} gives
\begin{equation}
\label{eq:adjoint-approximation-bound}
\begin{aligned}
|a_{h,\mu}^B(e,z-\Pi_h z)|
&\leq
Ch\left(\|e\|_{h,B}
+h^{\min\{\sigma,k\}}|u|_{1+\sigma,\Omega_h}\right)\|z\|_{2,\Omega}\\
&\quad+Ch^2\|e\|_{0,\Omega_h}\|z\|_{2,\Omega}.
\end{aligned}
\end{equation}
The remaining nonlinear projection term satisfies
\[
|(g(w_h)-g(u),\Pi_h z-z)_{\Omega_h}|
\leq CL_gh^2\|e\|_{0,\Omega_h}\|z\|_{2,\Omega}.
\]
Furthermore, \eqref{eq:residual-adjoint-test} gives
\[
|r_h^B(u;\Pi_h z-z)|
\leq C_Rh^{\min\{\sigma,k\}+1}|u|_{1+\sigma,\Omega_h}\|z\|_{2,\Omega},
\]
and \eqref{eq:residual-low-regularity} with \(s=1\) gives
\[
|r_h^B(z;e)|\leq C_{\mathrm R}h\|z\|_{2,\Omega}\|e\|_{h,B}.
\]
Inserting these bounds into \eqref{eq:nonlinear-projection-duality} and
using \(\|z\|_{2,\Omega}\leq C_S\|e\|_{0,\Omega_h}\), we obtain
\[
\begin{aligned}
\|e\|_{0,\Omega_h}^2
&\leq
C h\left(\|e\|_{h,B}
+h^{\min\{\sigma,k\}}|u|_{1+\sigma,\Omega_h}\right)
\|e\|_{0,\Omega_h}\\
&\quad
+Ch^2\|e\|_{0,\Omega_h}^2 .
\end{aligned}
\]
For sufficiently small \(h\), the last term can be absorbed into the left-hand
side.  Thus
\begin{equation}
\label{eq:duality-intermediate-estimate}
\begin{aligned}
\|e\|_{0,\Omega_h}
&\leq
Ch\|e\|_{h,B}
+Ch^{\min\{\sigma,k\}+1}|u|_{1+\sigma,\Omega_h}.
\end{aligned}
\end{equation}
Combining \eqref{eq:duality-intermediate-estimate} with
\eqref{eq:discrete-projection-energy-bound} gives
\[
\|e\|_{h,B}
\leq
Ch^{\min\{\sigma,k\}}|u|_{1+\sigma,\Omega_h}
+CL_gh\|e\|_{h,B}.
\]
For sufficiently small \(h\), the last term is absorbed into the left-hand
side, and hence
\[
\|u-w_h\|_{h,B}
\leq C_Eh^{\min\{\sigma,k\}}|u|_{1+\sigma,\Omega_h}.
\]
Substituting this bound back into
\eqref{eq:duality-intermediate-estimate} proves
\begin{equation}
\label{eq:L2_bound_u-w_h}
\|u-w_h\|_{0,\Omega_h}
\leq C_Lh^{\min\{\sigma,k\}+1}|u|_{1+\sigma,\Omega_h}.
\end{equation}
\end{proof}

\begin{lemma}
\label{lemma:bound_w_h_partial_t}
For the solution $w_h$ of \eqref{eq:eval_w_h}, there exist constants
$C_E'$ and $C_L'$, independent of $h$, such that
\begin{equation}
\label{eq:time-derivative-projection-estimates}
  \|\partial_t (u-w_h)\|_{h,B}
  \leq C_E'h^{\min\{\sigma,k\}}, \quad
  \|\partial_t (u-w_h)\|_{0,\Omega_h}
  \leq C_L'h^{\min\{\sigma,k\}+1}.
\end{equation}
\end{lemma}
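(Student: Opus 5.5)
Differentiate the defining relation \eqref{eq:eval_w_h} in time and repeat the argument of Lemma~\ref{lemma:bound_w_h} for the resulting \emph{linear} problem satisfied by $\partial_t w_h$. Since $W_h$, $a_{h,\mu}^B$ and $r_h^B$ are time independent and $r_h^B(u;\cdot)$ is linear in $u$, differentiation gives, for all $w\in W_h$,
\[
a_{h,\mu}^B(\partial_t w_h,w)+(g'(w_h)\partial_t w_h,w)_{\Omega_h}
= a_{h,\mu}^B(\partial_t u,w)+(g'(u)\partial_t u,w)_{\Omega_h}-r_h^B(\partial_t u;w).
\]
Existence and smoothness in $t$ of $w_h(t)$ follow from the implicit function theorem, because the linearized operator $a_{h,\mu}^B+(g'(w_h)\cdot,\cdot)$ is coercive by \eqref{eq:shifted-coercivity} together with $|g'|\le L_g$ and $\mu>C_{\mathrm G}+L_g$. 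With $e':=\partial_t(u-w_h)$, the relation can be written as the error equation
\[
a_{h,\mu}^B(e',w)+(g'(w_h)e',w)_{\Omega_h}=r_h^B(\partial_t u;w)-\bigl((g'(u)-g'(w_h))\partial_t u,w\bigr)_{\Omega_h}.
\]
This is the same structure as in Lemma~\ref{lemma:bound_w_h}, with $u$ replaced by $\partial_t u=v\in H^{1+\sigma}$ from \eqref{eq:solution-regularity}. The only new element is the perturbation term $\bigl((g'(u)-g'(w_h))v,w\bigr)$.

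\textbf{Energy estimate.} Split $e'=\eta_v'+\delta_h'$ with $\eta_v'=\Pi_h v-v$ (up to sign) and $\delta_h'=\Pi_h v-\partial_t w_h$, and test with $\delta_h'$. The coercivity estimate \eqref{eq:shifted-coercivity} combined with $|(g'(w_h)\delta_h',\delta_h')|\le L_g\|\delta_h'\|_0^2$ controls the left-hand side. Lemma~\ref{lemma:projection-defect} bounds $a_{h,\mu}^B(\eta_v',\delta_h')$, and \eqref{eq:residual_inequality} with $w=v$ bounds the residual. For the new term, use \eqref{eq:g'_lipschitz} and $v\in C^0(\overline\Omega)$ (by the embedding):
\[
|((g'(u)-g'(w_h))v,\delta_h')|\le L_{g'}\|v\|_{L^\infty}\|u-w_h\|_{0}\|\delta_h'\|_0\le Ch^{\min\{\sigma,k\}+1}\|\delta_h'\|_0,
\]
where the last step uses Lemma~\ref{lemma:bound_w_h}. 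The term $(g'(w_h)\eta_v',\delta_h')$ is of order $h^{\min\{\sigma,k\}+1}$. Altogether this yields $\|e'\|_{h,B}\le Ch^{\min\{\sigma,k\}}+CL_g\|e'\|_0$, the analogue of \eqref{eq:discrete-projection-energy-bound}.

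\textbf{$L^2$ estimate by duality.} Take $c=g'(w_h)$, which satisfies $|c|\le L_g$, and solve the adjoint problem of Remark~\ref{rem:adjoint-regularity} with right-hand side $e'$, giving $\|z\|_2\le C_S\|e'\|_0$. Proceeding exactly as in \eqref{eq:nonlinear-projection-duality}, one gets
\[
\|e'\|_0^2=a_{h,\mu}^B(e',z-\Pi_hz)+(g'(w_h)e',z-\Pi_hz)+r_h^B(v;\Pi_hz-z)-r_h^B(z;e')-\bigl((g'(u)-g'(w_h))v,\Pi_h z\bigr).
\]
The first four terms are bounded as before, using \eqref{eq:adjoint-approximation-bound}, \eqref{eq:residual-adjoint-test} and \eqref{eq:residual-low-regularity}. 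The last term is bounded by $C\|u-w_h\|_0\|z\|_0\le Ch^{\min\{\sigma,k\}+1}\|e'\|_0$. This gives
\[
\|e'\|_0\le Ch\|e'\|_{h,B}+Ch^{\min\{\sigma,k\}+1}.
\]
Combining with the energy estimate and absorbing for small $h$, as at the end of Lemma~\ref{lemma:bound_w_h}, yields both bounds in \eqref{eq:time-derivative-projection-estimates}. The constants are uniform in $t$ because $|v|_{1+\sigma}$ and $|u|_{1+\sigma}$ lie in $L^\infty(J)$.

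\textbf{Main obstacle.} The delicate step is the nonlinear coupling term $(g'(u)-g'(w_h))v$. It must be bounded in $L^2$ with the optimal order $h^{\min\{\sigma,k\}+1}$ already in the energy step, which requires an $L^\infty$ bound on $v$ together with the already-proved optimal $L^2$ estimate of Lemma~\ref{lemma:bound_w_h}. A second point to justify is the time differentiability of $w_h$, which I would handle with the implicit function theorem on the finite-dimensional space $W_h$, noting that the linearization is invertible uniformly in $h$.
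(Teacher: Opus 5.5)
Your proof is correct and follows the paper's overall plan: differentiate \eqref{eq:eval_w_h}, prove an energy estimate for the linearized problem, then run duality with $c=g'(w_h)$. However, your energy step is organized differently from the paper's, and the difference matters.

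\textbf{Energy step.} The paper sets $y_h=\partial_t w_h-v$ and takes $w=y_h$ directly in the differentiated identity, invoking coercivity for $y_h$. Since $v\notin W_h$ in general, $y_h$ is not an admissible test function: the identity holds only for $w\in W_h$, and the G{\aa}rding inequality is proved only on $W_h$. You instead split $e'=-\eta_v+\delta_h'$ with $\delta_h'=\Pi_h v-\partial_t w_h\in W_h$, test with $\delta_h'$, and bound $a_{h,\mu}^B(\eta_v,\delta_h')$ by the projection-defect lemma. This is exactly the mechanism of Lemma~\ref{lemma:bound_w_h}, and it is what makes this step rigorous. Your splitting also shows that the $CL_g\|e'\|_{0}$ term you carry is unnecessary. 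The term $(g'(w_h)\delta_h',\delta_h')$ is absorbed because $\mu>C_{\mathrm G}+L_g$, $(g'(w_h)\eta_v,\delta_h')$ is of higher order, and the shift term $\mu(\eta_v,\delta_h')$ vanishes by $L^2$-orthogonality. So you get $\|e'\|_{h,B}\le Ch^{\min\{\sigma,k\}}$ at once.

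\textbf{Duality step.} This coincides with the paper's sketch. You make explicit two points the paper leaves implicit.
\begin{itemize}
\item The coupling term $-\bigl((g'(u)-g'(w_h))v,\Pi_h z\bigr)$. This is where the optimal $L^2$ bound of Lemma~\ref{lemma:bound_w_h} is genuinely needed; in the energy step, an $O(h^{\min\{\sigma,k\}})$ bound on $u-w_h$ would already suffice, so your ``main obstacle'' remark slightly misplaces where optimality is required.
\item The replacement of $r_h^B(v;\Pi_h z)$ by $r_h^B(v;\Pi_h z-z)$, which tacitly uses $r_h^B(v;z)=0$ (single-valued $z$ vanishing on $\Gamma_{\mathrm D}$).
\end{itemize}

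Your implicit-function-theorem remark on the time differentiability of $w_h$ addresses a point the paper passes over silently.
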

\begin{proof}
Differentiate \eqref{eq:eval_w_h} for a fixed test function
\(w\in W_h\).  Since the mesh, \(\Pi_h\), \(a_{h,\mu}^B\), and \(\mu\) are
time-independent, and since \(\partial_tu=v\), the residual is linear in its
first argument and satisfies
\(\partial_t r_h^B(u;w)=r_h^B(v;w)\).  Hence
\[
a_{h,\mu}^B(\partial_t w_h,w)
+(g'(w_h)\partial_t w_h,w)_{\Omega_h}
=a_{h,\mu}^B(v,w)+(g'(u)v,w)_{\Omega_h}-r_h^B(v;w).
\]
Subtracting
\(a_{h,\mu}^B(v,w)+(g'(w_h)v,w)_{\Omega_h}\) from both sides gives
\begin{equation}
\label{eq:differentiated-projection-equation}
\begin{aligned}
 a_{h,\mu}^B(\partial_tw_h-v,w)
 +(g'(w_h)(\partial_tw_h-v),w)_{\Omega_h}
 ={}&((g'(u)-g'(w_h))v,w)_{\Omega_h}\\
 &-r_h^B(v;w).
\end{aligned}
\end{equation}
Set \(y_h:=\partial_t w_h-v\).  The same zero-order absorption used in
\eqref{eq:shifted-monotonicity}, now with
\((g'(w_h)y_h,y_h)_{\Omega_h}\ge -L_g\|y_h\|_{0,\Omega_h}^2\), gives
\[
a_{h,\mu}^B(y_h,y_h)+(g'(w_h)y_h,y_h)_{\Omega_h}
\ge C_{\partial t}\|y_h\|_{h,\mu,B}^2,
\]
where
\[
C_{\partial t}:=
\min\left\{c_{\mathrm G},\frac{\mu-C_{\mathrm G}-L_g}{\mu}\right\}>0.
\]
Taking \(w=y_h\) in \eqref{eq:differentiated-projection-equation} therefore
gives
\[
\begin{aligned}
C_{\partial t}\|y_h\|_{h,\mu,B}^2
&\le
\left|((g'(u)-g'(w_h))v,y_h)_{\Omega_h}\right|
+|r_h^B(v;y_h)|  \\
&\le
C L_{g'}\|v\|_{L^\infty(\Omega)}
\|u-w_h\|_{0,\Omega_h}\|y_h\|_{0,\Omega_h} \\
&\quad
+C_{\mathrm R}h^{\min\{\sigma,k\}}
|v|_{1+\sigma,\Omega_h}\|y_h\|_{h,B}.
\end{aligned}
\]
The embedding following from \(\sigma>1+d/2\), together with
\eqref{eq:nonlinear-projection-estimates}, yields
\[
\|y_h\|_{h,B}\le C_E'h^{\min\{\sigma,k\}}.
\]
For the \(L^2\) bound, repeat the shifted adjoint argument used in
\eqref{eq:nonlinear-projection-duality}, with \(e\) replaced by
\(\partial_t(u-w_h)=-y_h\) and with the right-hand side
\(((g'(u)-g'(w_h))v,\cdot)_{\Omega_h}-r_h^B(v;\cdot)\).  The preceding
energy estimate, \eqref{eq:nonlinear-projection-estimates}, and
\eqref{eq:residual-adjoint-test} give
\(\|y_h\|_{0,\Omega_h}\le C_L'h^{\min\{\sigma,k\}+1}\).  This proves
\eqref{eq:time-derivative-projection-estimates}.  The constants depend on
the corresponding norms of \(u\) and \(v\).
\end{proof}

To derive the error estimate for $u$, we combine
\eqref{eq:eval_w_h}, \eqref{eq:ahu}, and \eqref{eq:ahuh}.  Expanding the
shifted form in \eqref{eq:eval_w_h} gives
\[
a_h^B(u-w_h,w)+(g(u)-g(w_h),w)_{\Omega_h}
=r_h^B(u;w)-\mu(u-w_h,w)_{\Omega_h}.
\]
This identity cancels the consistency residual in \eqref{eq:ahu}.  Applying
also the identity
$(\partial_{tt}\phi, w) = \frac{\mathrm{d}}{\mathrm{d}t}(\partial_t\phi, w) 
- (\partial_t\phi, \partial_t w)$ to obtain the error identity
\begin{align}
    \label{eq:erru}
&-\bigl(\partial_t(u_h-w_h), \partial_t w\bigr)_{\Omega_h}
+ a_h^B(u_h-w_h, w)
+ (g(u_h)-g(w_h), w)_{\Omega_h}\\
= &\frac{\mathrm{d}}{\mathrm{d}t}\bigl(\partial_t(u-u_h), w\bigr)_{\Omega_h}
- \bigl(\partial_t(u-w_h), \partial_t w\bigr)_{\Omega_h}
-\mu(u-w_h,w)_{\Omega_h}. \notag
\end{align}
For fixed $\tau \in J$, let $\hat{v}(\cdot,t) = \int_t^\tau (u_h-w_h)(\cdot,s)
\,\mathrm{d}s$, so that $\hat{v}(\cdot,\tau)=0$ and 
$\partial_t\hat{v} = -(u_h-w_h)$. Choosing $w=\hat{v}$ in
\eqref{eq:erru},
using the symmetry of $a_h^B$, and integrating over $(0,\tau)$ gives
\begin{equation}
\label{eq:displacement-integrated-identity}
\begin{aligned}
    &\|(u_h-w_h)(\cdot,\tau)\|_{0,\Omega_h}^2
    +a_h^B(\hat{v}(\cdot,0),\hat{v}(\cdot,0))
    +2((v-v_h)(\cdot,0),\hat{v}(\cdot,0))_{\Omega_h}\\
    ={}&2\int_0^\tau \left[
      (\partial_t(u-w_h),u_h-w_h)_{\Omega_h}
      +(g(w_h)-g(u_h),\hat{v})_{\Omega_h}
      -\mu(u-w_h,\hat v)_{\Omega_h}
    \right]\mathrm{d}t\\
    &+\|(u_h-w_h)(\cdot,0)\|_{0,\Omega_h}^2.
\end{aligned}
\end{equation}
By \eqref{eq:garding},
\[
a_h^B(\hat v(\cdot,0),\hat v(\cdot,0))
\geq c_{\mathrm G}\|\hat v(\cdot,0)\|_{h,B}^2
-C_{\mathrm G}\|\hat v(\cdot,0)\|_{0,\Omega_h}^2,
\]
and Cauchy--Schwarz gives
\[
\|\hat v(\cdot,0)\|_{0,\Omega_h}^2
\leq T_{\mathrm f}\int_0^\tau
\|u_h-w_h\|_{0,\Omega_h}^2\,\mathrm dt.
\]
Using these bounds and the $L^2$-orthogonality of the initial projection
$v_h(\cdot,0)=\Pi_h v_0$ in
\eqref{eq:displacement-integrated-identity} yields
\begin{equation}
\label{eq:int_erru_ineq}
\begin{aligned}
&\|(u_h-w_h)(\cdot,\tau)\|_{0,\Omega_h}^2
\leq \|(u_h-w_h)(\cdot,0)\|_{0,\Omega_h}^2\\
&\quad
+ 2\int_0^\tau \bigl(\partial_t(u-w_h), u_h-w_h\bigr)_{\Omega_h}\,\mathrm{d}t\\
&\quad
+2\int_0^\tau (g(w_h)-g(u_h), \hat{v})_{\Omega_h}\,\mathrm{d}t
-2\mu\int_0^\tau(u-w_h,\hat v)_{\Omega_h}\,\mathrm{d}t\\
&\quad
+C_{\mathrm G}T_{\mathrm f}\int_0^\tau
\|u_h-w_h\|_{0,\Omega_h}^2\,\mathrm dt.
\end{aligned}
\end{equation}
The first integral on the right-hand side of
\eqref{eq:int_erru_ineq} satisfies
\begin{equation}
\label{eq:displacement-time-projection-bound}
\begin{aligned}
&2\int_0^\tau
\bigl(\partial_t(u-w_h), u_h-w_h\bigr)_{\Omega_h}\,\mathrm{d}t \\
&\quad\leq
\int_0^\tau \|\partial_t(u-w_h)\|_{0,\Omega_h}^2\,\mathrm{d}t
+ \int_0^\tau \|u_h-w_h\|_{0,\Omega_h}^2\,\mathrm{d}t .
\end{aligned}
\end{equation}
For the nonlinear integral, define
$\hat{f}(t) = \|u_h(\cdot,t)-w_h(\cdot,t)\|_{0,\Omega_h}$.  The
Lipschitz continuity of $g$ gives
\begin{equation}
\label{eq:displacement-nonlinear-bound}
\begin{aligned}
&2\left|\int_0^\tau
(g(w_h)-g(u_h),\hat v)_{\Omega_h}\,\mathrm dt\right|\\
&\quad\leq
2L_g\int_0^\tau \hat{f}(t)
\int_t^\tau \hat{f}(s)\,\mathrm{d}s\,\mathrm{d}t \\
&\quad=
L_g\left(\int_0^\tau \hat{f}(s)\,\mathrm{d}s\right)^2
\leq L_g\tau\int_0^\tau \hat{f}(s)^2\,\mathrm{d}s .
\end{aligned}
\end{equation}
Here we used the symmetry identity
$\int_0^\tau \hat{f}(t)\int_t^\tau \hat{f}(s)\,\mathrm{d}s\,\mathrm{d}t 
= \frac{1}{2}\left(\int_0^\tau \hat{f}(s)\,\mathrm{d}s\right)^2$
and the Cauchy--Schwarz inequality.  The shift term satisfies
\begin{equation}
\label{eq:displacement-shift-bound}
2\mu\left|\int_0^\tau(u-w_h,\hat v)_{\Omega_h}\,\mathrm{d}t\right|
\leq
\mu T_{\mathrm f}\int_0^\tau\|u-w_h\|_{0,\Omega_h}^2\,\mathrm{d}t
+\mu T_{\mathrm f}\int_0^\tau
\|u_h-w_h\|_{0,\Omega_h}^2\,\mathrm{d}t .
\end{equation}
Combining \eqref{eq:displacement-time-projection-bound}--%
\eqref{eq:displacement-shift-bound} gives
\begin{equation}
\label{eq:displacement-gronwall-inequality}
\|(u_h-w_h)(\cdot,\tau)\|_{0,\Omega_h}^2 \leq C_0 
+ (1+L_g\tau+\mu T_{\mathrm f}+C_{\mathrm G}T_{\mathrm f})
\int_0^\tau \|u_h(\cdot,s)-w_h(\cdot,s)\|_{0,\Omega_h}^2\,\mathrm{d}s,
\end{equation}
where $C_0 = \|(u_h-w_h)(\cdot,0)\|_{0,\Omega_h}^2 
+ \int_0^{T_{\mathrm{f}}}\|\partial_t(u-w_h)\|_{0,\Omega_h}^2\,\mathrm{d}t
+\mu T_{\mathrm f}\int_0^{T_{\mathrm f}}
\|u-w_h\|_{0,\Omega_h}^2\,\mathrm{d}t$.
Applying Gr\"{o}nwall's inequality to
\eqref{eq:displacement-gronwall-inequality} yields
\begin{equation}
\label{eq:displacement-discrete-bound}
    \|u_h-w_h\|_{L^\infty(J;L^2(\Omega_h))}^2
\le C_0\exp\!\left((1+L_gT_{\mathrm f}
+\mu T_{\mathrm f}+C_{\mathrm G}T_{\mathrm f})
T_{\mathrm f}\right).
\end{equation}
The approximation estimates
\eqref{eq:proj_error}, \eqref{eq:nonlinear-projection-estimates}, and
\eqref{eq:time-derivative-projection-estimates} imply
$C_0\leq Ch^{2\min\{\sigma,k\}+2}$.  Therefore,
\begin{equation}
\label{eq:displacement-final-error}
\|e_u\|_{L^\infty(J;L^2(\Omega_h))}
\leq C_U h^{\min\{\sigma,k\}+1}.
\end{equation}
\subsection{Error estimate of $v$}
Substituting \eqref{eq:ahu} and \eqref{eq:ahuh} into
the velocity error equation and simplifying gives
\begin{equation}
\label{eq:velocity-error-equation}
(\partial_t\xi_v, w)_{\Omega_h} + a_h^B(\xi_u, w)
= (g(u)-g(u_h), w)_{\Omega_h} - (\partial_t\eta_v, w)_{\Omega_h} 
- a_h^B(\eta_u, w) - r_h^B(u; w).
\end{equation}
Choosing $w = \xi_v$, using the symmetry of $a_h^B$, and integrating over
$(0,\tau)$ with initial projections $\xi_u(\cdot,0) = \xi_v(\cdot,0) = 0$ yields
\begin{equation}
\label{eq:velocity-energy-identity}
\begin{aligned}
&\frac{1}{2}\|\xi_v(\cdot,\tau)\|_{0,\Omega_h}^2
+ \frac{1}{2}a_h^B(\xi_u(\cdot,\tau),\xi_u(\cdot,\tau))\\
&\quad= \int_0^\tau (g(u)-g(u_h),\xi_v)_{\Omega_h}\,\mathrm{d}s
- \int_0^\tau (\partial_t\eta_v,\xi_v)_{\Omega_h}\,\mathrm{d}s\\
&\qquad
- \int_0^\tau a_h^B(\eta_u,\xi_v)\,\mathrm{d}s
- \int_0^\tau r_h^B(u;\xi_v)\,\mathrm{d}s.
\end{aligned}
\end{equation}
Since $\partial_t\xi_u=\xi_v$, integration by parts in the last two
terms of \eqref{eq:velocity-energy-identity} gives the following
identity.  Here
$\partial_t\eta_u=\eta_v$ and
$\partial_t r_h^B(u;\cdot)=r_h^B(v;\cdot)$:
\begin{equation}
\label{eq:velocity-integration-by-parts}
\begin{aligned}
&-\int_0^\tau a_h^B(\eta_u,\xi_v)\,\mathrm ds
-\int_0^\tau r_h^B(u;\xi_v)\,\mathrm ds\\
&\quad =
-\bigl[a_h^B(\eta_u,\xi_u)+r_h^B(u;\xi_u)\bigr]_{s=0}^{s=\tau}\\
&\qquad
+\int_0^\tau a_h^B(\eta_v,\xi_u)\,\mathrm ds
+\int_0^\tau r_h^B(v;\xi_u)\,\mathrm ds .
\end{aligned}
\end{equation}
The lower-regularity projection estimate
\eqref{eq:L2-proj-lower-regularity} gives
\begin{equation}
\label{eq:time-velocity-projection-error}
\|\partial_t\eta_v\|_{L^1(J;L^2(\Omega_h))}
\leq C_{\mathrm{proj}}h^{\min\{k+1,\sigma\}}
\|\partial_tv\|_{L^1(J;H^\sigma(\Omega))}.
\end{equation}
Applying the G{\aa}rding inequality, the Lipschitz continuity of $g$,
Young's inequality, the projection-defect estimate
\eqref{eq:projection-defect-bound}, and
\eqref{eq:residual_inequality} to
\eqref{eq:velocity-energy-identity}--%
\eqref{eq:velocity-integration-by-parts}, and using
\eqref{eq:projection_energy_estimate} and
\eqref{eq:time-velocity-projection-error} gives the first inequality
below.  Moreover,
\[
\|\xi_u\|_{L^\infty(J;L^2(\Omega_h))}
\leq \|e_u\|_{L^\infty(J;L^2(\Omega_h))}
+\|\eta_u\|_{L^\infty(J;L^2(\Omega_h))}
\leq Ch^{\min\{\sigma,k\}+1}
\]
by \eqref{eq:displacement-final-error} and \eqref{eq:proj_error}; hence
the G{\aa}rding defect is of higher order.  Consequently,
\begin{equation}
\label{eq:velocity-energy-estimate}
\begin{aligned}
&\frac{1}{2}\|\xi_v(\cdot,\tau)\|_{0,\Omega_h}^2 
+ \frac{1}{2}c_{\mathrm G}\|\xi_u(\cdot,\tau)\|_{h,B}^2 \\
&\quad\leq
T_{\mathrm{f}}L_g\|e_u\|_{L^\infty(J;L^2(\Omega_h))}\|\xi_v\|_{L^\infty(J;L^2(\Omega_h))}
+ \|\partial_t\eta_v\|_{L^1(J;L^2(\Omega_h))}\|\xi_v\|_{L^\infty(J;L^2(\Omega_h))}\\
&\qquad+ (2C_{\mathrm{cont}}^{\Pi} + 2C_R)h^{\min\{\sigma,k\}}
|u|_{L^\infty(J;H^{1+\sigma}(\Omega_h))}
\|\xi_u\|_{L^\infty(J;W(h))}\\
&\qquad+ (C_{\mathrm{cont}}^{\Pi}+C_R)T_{\mathrm{f}}h^{\min\{\sigma,k\}}
|v|_{L^\infty(J;H^{1+\sigma}(\Omega_h))}
\|\xi_u\|_{L^\infty(J;W(h))}\\
&\qquad+\frac{1}{2}C_{\mathrm G}
\|\xi_u(\cdot,\tau)\|_{0,\Omega_h}^2\\
&\quad\leq \frac{1}{4}\|\xi_v\|_{L^\infty(J;L^2(\Omega_h))}^2 
+ \frac{1}{2}c_{\mathrm G}\|\xi_u\|_{L^\infty(J;W(h))}^2
+ Ch^{2\min\{\sigma,k\}}.
\end{aligned}
\end{equation}
Since the inequality holds for all $\tau \in J$, taking the supremum over 
$\tau$ and absorbing $\frac{1}{4}\|\xi_v\|_{L^\infty(J;L^2(\Omega_h))}^2$ 
into the left-hand side yields
\begin{equation}
\label{eq:discrete-velocity-error}
\|\xi_v\|_{L^\infty(J;L^2(\Omega_h))}^2 
\leq Ch^{2\min\{\sigma,k\}},
\end{equation}
where $C$ depends on $C_U$, $T_{\mathrm{f}}$, $L_g$, and the solution
regularity. Combining \eqref{eq:discrete-velocity-error} with
\eqref{eq:velocity-error-decomposition} and \eqref{eq:proj_error} yields
\begin{equation}
\label{eq:velocity-final-error}
\|e_v\|_{L^\infty(J;L^2(\Omega_h))}
\leq C_V h^{\min\{\sigma,k\}}.
\end{equation}

This completes the proof.
\section{Numerical experiments}
\label{sec:num_experiments}
In this section, we present numerical experiments for linear and semi-linear wave equations with two objectives: to verify the theoretical convergence rates, and to demonstrate the advantages of the symplectic Hamiltonian DDG method through systematic comparison of different spatial and temporal discretizations. We also include a two-dimensional Josephson transmission-line test to examine whether the method preserves the qualitative soliton dynamics of reflection and cloning. The time step is chosen as $\Delta t = \mathrm{CFL}\cdot h$.  The sufficient admissible range in Lemma~\ref{lemma:ddg-param-range} is not imposed as a necessary tuning rule in the computations; the practical parameter choices can be less restrictive while still displaying the predicted convergence behavior.  The flux parameters $(\beta_0, \beta_1)$ and CFL numbers used in all experiments are listed in Tables~\ref{tab:DDG-params} and \ref{tab:cfl-values}, and are adopted uniformly for all cases.
\begin{table}[htbp]
\footnotesize
\centering
\caption{Parameters of (S)DDG methods.}
\label{tab:DDG-params}
\begin{tabular}{c|c|c}
\hline
$k$ &  $\beta_0$ & $\beta_1$ \\
\hline
$2$ & $4.5$ & $-\frac{1}{10}$ \\
[3pt]
$3$ & $9$ &  $-\frac{1}{20}$ \\
[3pt]
$4$ & $13$& $-\frac{1}{40}$ \\
\hline
\end{tabular}
\end{table}
\begin{table}[htbp]
\centering
\caption{CFL numbers for the linear and semi-linear propagation equations.}
\label{tab:cfl-values}
\footnotesize
\begin{tabular}{c|ccc}
\hline
$k$ & $2$ & $3$ & $4$ \\
\hline
CFL for Explicit Time Integrators &$0.1$ & $0.05$ & $0.025$ \\
\hline
CFL for Implicit Time Integrators & $0.5$ & $0.25$ & $0.15$ \\
\hline
\end{tabular}
\end{table}
We measure the errors $\mathrm{err}_{L^2}(u_h) := \|u-u_h\|_{L^\infty(\bar{J};L^2(\Omega_h))}$ 
and $\mathrm{err}_{L^\infty}(u_h) := \|u-u_h\|_{L^\infty(\bar{J};L^\infty(\Omega_h))}$, 
with analogous definitions for $v_h$.
In addition, the evolution of the relative discrete energy error is measured by $
\text{err}(\mathcal{E}_h)
:= \frac{|\mathcal{E}_h-H|}{H}
$.
For the spatial discretization, we compare the SDDG method with the standard DDG method. For time integration, we compare four Runge-Kutta schemes: ESPRK, explicit Runge-Kutta method (ERK) \cite{butcher2016numerical}, SDIRK, and diagonally implicit Runge-Kutta method (DIRK) \cite{RogerAlexander}. For test cases with known analytical solutions, the initial conditions are obtained directly from the $L^2-$projection of the exact solutions. For the Josephson transmission-line test, the prescribed kink initial data in \eqref{eq:josephson-ic} are projected onto the DG space.
\subsection{Linear wave propagation}
In the subsection, we consider the linear wave equation $\partial_{tt} u - \Delta u = 0,\boldsymbol{x}\in\Omega$.
\begin{example}[1D Standing wave]\label{example:harmonic} Consider the following exact solution,
\begin{equation*}
    u(x,t) = \frac{1}{\pi}\sin\big(\pi x\big)\cos\big(\pi t\big),
     \quad \Omega=[-1,1].
\end{equation*}
\end{example}
\begin{example}[1D Travelling wave]\label{example:travelling}
    Consider the following exact solution,
    \begin{equation*}
        u(x,t)=\frac{1}{m}\sin(m(x-t)), \quad  m\in\mathbb{N}^+,\quad \Omega=[0,1].
    \end{equation*}
\end{example}
\begin{example}[1D Pulse propagation]\label{example:pulse}
Consider an initial pulse of the form
\begin{equation*}
    u(x,0) = \chi(\frac{x-{x_0}}{l}), \quad \chi(\tilde{x}) = \begin{cases} (2\tilde{x}-1)^{10}(2\tilde{x}+1)^{10}, & |\tilde{x}| < 0.5, \\ 0, & \text{otherwise.} \end{cases}\quad \Omega=[0,1].
\end{equation*}
One corresponding expression of the exact solution is
\begin{equation*}
    u(x,t) = \chi(\frac{x-{x_0}-t}{l}),\quad\Omega=[0,1].
\end{equation*}
\end{example}
\begin{example}[2D Travelling wave]\label{example:2dtravelling} Consider the following exact solution,
\begin{equation*}
    u(x,y,t)= \sin{\left(2m\pi x+2n\pi y-\omega t\right)},\ \ m,n\in\mathbb{N}^+,\Omega=[0,1]^2,J=(0,T_{\mathrm{f}}],
\end{equation*}
where $\omega=2\pi\sqrt{m^2+n^2}$.
\end{example}
\subsubsection{Verification of the convergence properties}
We consider the standing wave solution (Example~\ref{example:harmonic}). Errors are evaluated at the final time corresponding to one full wave period. Table~\ref{tab:linear-errors-low-freq-ESPRK} reports the $L^2$ and $L^\infty$ errors for $k=2,3,4$
under successive mesh refinements. For all polynomial degrees, $u_h$
 achieves optimal convergence rate and $v_h
 $ achieves suboptimal convergence rate, consistent with the theoretical estimates in Section~\ref{sec:error-analysis}.

\begin{table}[t]
\footnotesize
\setlength{\tabcolsep}{5pt} 
\centering
\caption{{Example \ref{example:harmonic}}, $T_{\mathrm{f}}=2.0$. History of convergence of the numerical approximations by the schemes SDDG($k$)-ESPRK($k+2$).}
\label{tab:linear-errors-low-freq-ESPRK}
\begin{tabular}{c|c|cc|cc|cc|cc}
\hline
$k$ & $N$
& \multicolumn{2}{c|}{$u_h$}
& \multicolumn{2}{c|}{$v_h$}
& \multicolumn{2}{c|}{$u_h$}
& \multicolumn{2}{c}{$v_h$}
\\
 & 
 & $\mathrm{err}_{L^2}$ & Order 
 & $\mathrm{err}_{L^2}$ & Order
 & $\mathrm{err}_{L^\infty}$ & Order 
 & $\mathrm{err}_{L^\infty}$ & Order
\\
\hline														
&$	64	$&$	2.29\mathrm{e}-06	$&$	-	$&$	5.80\mathrm{e}-04	$&$	-	$&$	2.98\mathrm{e}-06	$&$	-	$&$	9.08\mathrm{e}-04	$&$	-	$\\
$2$&$	128	$&$	2.86\mathrm{e}-07	$&$	3.00	$&$	1.45\mathrm{e}-04	$&$	2.00	$&$	3.74\mathrm{e}-07	$&$	3.00	$&$	2.27\mathrm{e}-04	$&$	2.00	$\\
&$	256	$&$	3.58\mathrm{e}-08	$&$	3.00	$&$	3.61\mathrm{e}-05	$&$	2.00	$&$	4.68\mathrm{e}-08	$&$	3.00	$&$	5.67\mathrm{e}-05	$&$	2.00	$\\
&$	512	$&$	4.48\mathrm{e}-09	$&$	3.00	$&$	9.03\mathrm{e}-06	$&$	2.00	$&$	5.85\mathrm{e}-09	$&$	3.00	$&$	1.42\mathrm{e}-05	$&$	2.00	$\\
\hline													
&$	32	$&$	2.68\mathrm{e}-07	$&$	-	$&$	1.68\mathrm{e}-05	$&$	-	$&$	3.84\mathrm{e}-07	$&$	-	$&$	2.81\mathrm{e}-05	$&$	-	$\\
$3$&$	64	$&$	1.67\mathrm{e}-08	$&$4.01	$&$	2.00\mathrm{e}-06	$&$	3.07	$&$	2.40\mathrm{e}-08	$&$	4.00	$&$	3.27\mathrm{e}-06	$&$	3.10	$\\
&$	128	$&$	1.05\mathrm{e}-09	$&$	3.99	$&$	2.46\mathrm{e}-07	$&$	3.02	$&$	1.51\mathrm{e}-09	$&$	3.99	$&$	4.03\mathrm{e}-07	$&$	3.02	$\\
&$	256	$&$	6.56\mathrm{e}-11	$&$	4.00	$&$	3.06\mathrm{e}-08	$&$	3.01	$&$	9.44\mathrm{e}-11	$&$	4.00	$&$	5.00\mathrm{e}-08	$&$	3.01	$\\
\hline													
&$	16	$&$	5.96\mathrm{e}-08	$&$	-	$&$	2.22\mathrm{e}-05	$&$	-	$&$	1.05\mathrm{e}-07	$&$	-	$&$	4.37\mathrm{e}-05	$&$	-	$\\
$4$&$	32	$&$	1.93\mathrm{e}-09	$&$	4.95	$&$	1.40\mathrm{e}-06	$&$	3.99	$&$	3.21\mathrm{e}-09	$&$	5.03	$&$	2.75\mathrm{e}-06	$&$	3.99	$\\
&$	64	$&$	6.04\mathrm{e}-11	$&$	4.99	$&$	8.76\mathrm{e}-08	$&$	4.00	$&$	1.05\mathrm{e}-10	$&$	4.93	$&$	1.74\mathrm{e}-07	$&$	3.98	$\\
&$	128	$&$	1.89\mathrm{e}-12	$&$	5.00	$&$	5.48\mathrm{e}-09	$&$	4.00	$&$	3.30\mathrm{e}-12	$&$	4.99	$&$	1.09\mathrm{e}-08	$&$	4.00	$\\
\hline
\end{tabular}
\end{table}
\begin{table}[t]
\footnotesize
\setlength{\tabcolsep}{5pt} 
\centering
\caption{{Example \ref{example:2dtravelling}}, $T_{\mathrm{f}}=1.0$. History of convergence of the numerical approximations by the schemes SDDG($k$)-ESPRK($k+2$).}
\label{tab:linear-errors-low-freq-ESPRK-2d}
\begin{tabular}{c|c|cc|cc|cc|cc}
\hline
$k$ & $N$
& \multicolumn{2}{c|}{$u_h$}
& \multicolumn{2}{c|}{$v_h$}
& \multicolumn{2}{c|}{$u_h$}
& \multicolumn{2}{c}{$v_h$}
\\
 & 
 & $\mathrm{err}_{L^2}$ & Order 
 & $\mathrm{err}_{L^2}$ & Order
 & $\mathrm{err}_{L^\infty}$ & Order 
 & $\mathrm{err}_{L^\infty}$ & Order
\\
\hline
&16	&$2.81\mathrm{e-}04$	&-	&2.00$\mathrm{e-}02$	&-	&$9.98\mathrm{e-}04$	&-	&$7.65\mathrm{e-}02$	&-\\
2&32	&$3.02\mathrm{e-}05$	&3.21	&$4.51\mathrm{e-}03$	&2.15	&$1.14\mathrm{e-}04$	&3.13	&$1.57\mathrm{e-}02$	&2.28\\
&64	&$3.40\mathrm{e-}06$	&3.15	&$1.09\mathrm{e-}03$	&2.04	&$1.24\mathrm{e-}05$	&3.21	&$3.82\mathrm{e-}03$	&2.04\\
&128	&$4.16\mathrm{e-}07$	&3.03	&$2.69\mathrm{e-}04$	&2.02	&$1.48\mathrm{e-}06$	&3.06	&$9.34\mathrm{e-}04$	&2.03\\
\hline
&16	&$1.40\mathrm{e-}05$	&-	&$9.36\mathrm{e-}04$	&-	&$6.06\mathrm{e-}05$	&-	&$3.91\mathrm{e-}03$	&-\\
3&32	&$8.03\mathrm{e-}07$	&4.12	&$1.03\mathrm{e-}04$	&3.19	&$3.54\mathrm{e-}06$	&4.09	&$4.62\mathrm{e-}04$	&3.08\\
&64	&$4.96\mathrm{e-}08$	&4.02	&$1.30\mathrm{e-}05$	&3.03	&$2.22\mathrm{e-}07$	&4.00	&$5.60\mathrm{e-}05$	&3.04\\
&128	&$3.01\mathrm{e-}09$	&4.04	&$2.00\mathrm{e-}06$	&3.03	&$1.35\mathrm{e-}08$	&4.04	&$7.00\mathrm{e-}06$	&3.03\\
\hline
&16	&$1.52\mathrm{e-}07$	&-	&$1.01\mathrm{e-}05$	&-	&4.16$\mathrm{e-}07$	&-	&$6.67\mathrm{e-}05$	&-\\
4&32	&$4.16\mathrm{e-}09$	&5.19	&$4.96\mathrm{e-}07$	&4.35	&$1.24\mathrm{e-}08$	&5.07	&$2.60\mathrm{e-}06$	&4.68\\
&64	&$1.20\mathrm{e-}10$	&5.11	&$2.97\mathrm{e-}08$	&4.06	&$3.37\mathrm{e-}10$	&5.20	&$1.05\mathrm{e-}07$	&4.63\\
&128	&$4.46\mathrm{e-}12$	&4.76	&$1.91\mathrm{e-}09$	&3.96	&$1.27\mathrm{e-}11$	&4.73	&$5.49\mathrm{e-}09$	&4.26\\
\hline
\end{tabular}
\end{table}
\subsubsection{Solution profile}
We examine the solution profiles for the travelling wave and pulse propagation (Example~\ref{example:travelling} and \ref{example:pulse}), shown in Figures~\ref{fig:travelling_implicit_compar_sol}--\ref{fig:travelling_explicit_compar_sol} and \ref{fig:pulse_implicit_compar_sol} respectively.

Under implicit integration, the symplectic Hamiltonian DDG method (SDDG-SDIRK) achieves the best overall performance: SDIRK preserves wave amplitudes over long times, while SDDG method consistently reduces dispersion errors more effectively than DDG method (Figures~\ref{fig:travelling_DDG-DIRK} versus \ref{fig:travelling_SDDG-DIRK}, \ref{fig:travelling_DDG-SDIRK} versus \ref{fig:travelling_SDDG-SDIRK}, \ref{fig:pulse_DDG-DIRK} versus \ref{fig:pulse_SDDG-DIRK} and \ref{fig:pulse_DDG-SDIRK} versus \ref{fig:pulse_SDDG-SDIRK}). In contrast, the non-symplectic DIRK method introduces numerical dissipation that causes spurious amplitude decay (Figure~\ref{fig:travelling_SDDG-DIRK}, \ref{fig:pulse_SDDG-DIRK}).

Under explicit integration, SDDG method produces significantly smaller phase errors than DDG method, as seen by comparing Figures~\ref{fig:travelling_DDG-ERK} and \ref{fig:travelling_SDDG-ERK}, \ref{fig:travelling_DDG-ESPRK} and \ref{fig:travelling_SDDG-ESPRK}.  This reflects the dispersion reduction from the discrete Hamiltonian structure. The choice between symplectic and non-symplectic explicit time integrators has no pronounced impact on wave profile accuracy at the same time step. However, symplectic integrators allow larger CFL numbers without loss of stability.
\begin{figure}[t]
  \centering
    \begin{subfigure}[t]{0.48\textwidth}
    \centering
    \includegraphics[width=\textwidth]{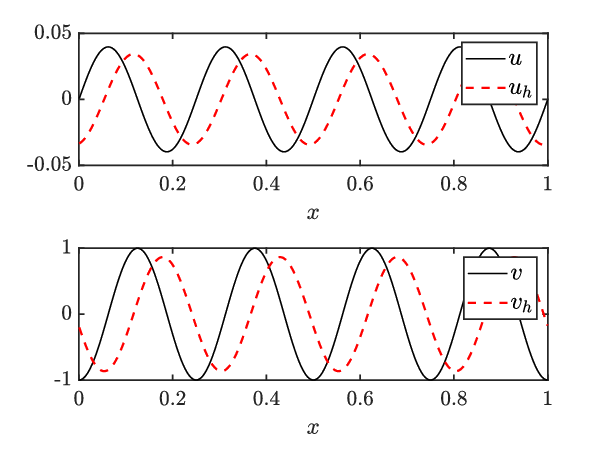}
    \caption{DDG(2)-DIRK(4,4)}
    \label{fig:travelling_DDG-DIRK}
  \end{subfigure}
  \hfill
  \begin{subfigure}[t]{0.48\textwidth}
    \centering
    \includegraphics[width=\textwidth]{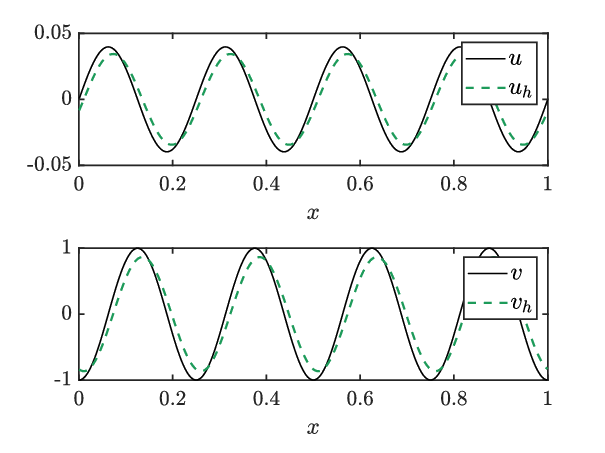}
    \caption{SDDG(2)-DIRK(4,4)}
    \label{fig:travelling_SDDG-DIRK}
  \end{subfigure}\\
  \begin{subfigure}[t]{0.48\textwidth}
    \centering
    \includegraphics[width=\textwidth]{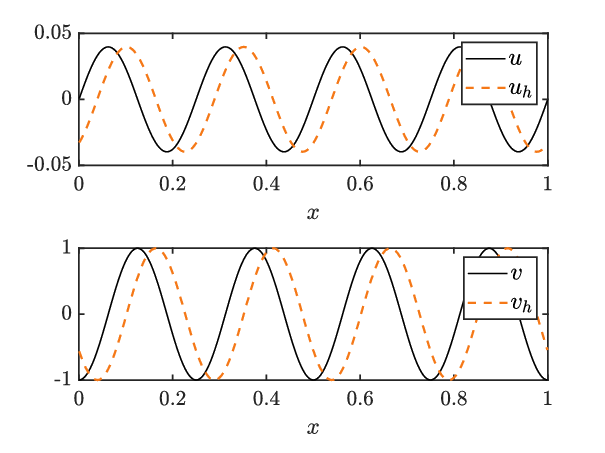}
    \caption{DDG(2)-SDIRK(4,4)}
    \label{fig:travelling_DDG-SDIRK}
  \end{subfigure}
  \hfill
  \begin{subfigure}[t]{0.48\textwidth}
    \centering
    \includegraphics[width=\textwidth]{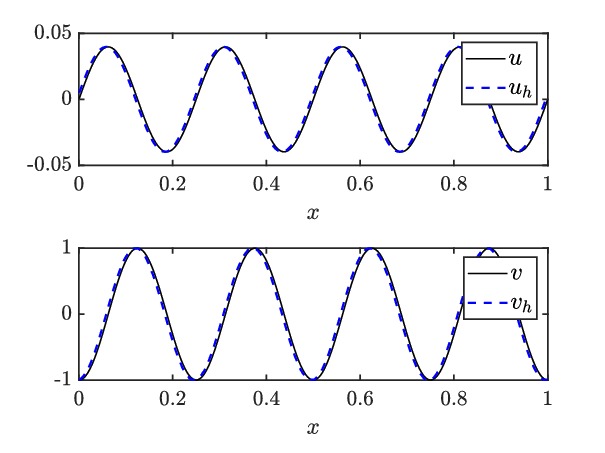}
    \caption{SDDG(2)-SDIRK(4,4)}
    \label{fig:travelling_SDDG-SDIRK}
  \end{subfigure}

\caption{Example~\ref{example:travelling}, $m=8$, $T_{\mathrm{f}}=50$, 
$N=64$, $\mathrm{CFL}=0.5$. Exact solutions and numerical approximations 
by implicit time integrators.
$\mathrm{err}_{L^\infty}(u_h)$: 0.047, 0.011, 0.038, 0.004;
$\mathrm{err}_{L^\infty}(v_h)$: 1.172, 0.275, 0.945, 0.118.}
\label{fig:travelling_implicit_compar_sol}
\end{figure}
\begin{figure}[t]
  \centering
  \begin{subfigure}[t]{0.48\textwidth}
    \centering
    \includegraphics[width=\textwidth]{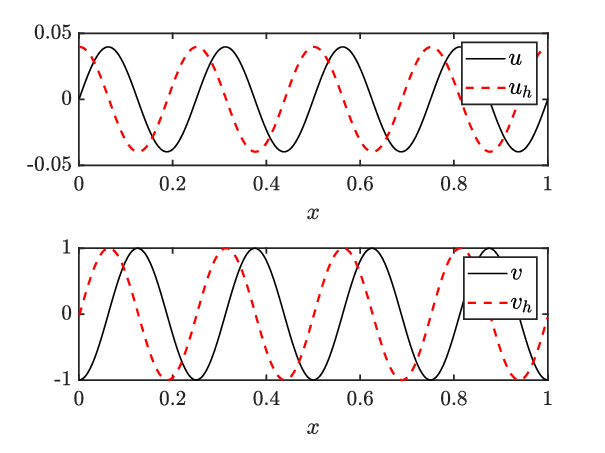}
    \caption{DDG(2)-ERK(4,4)}
    \label{fig:travelling_DDG-ERK}
  \end{subfigure}
  \hfill
  \begin{subfigure}[t]{0.48\textwidth}
    \centering
    \includegraphics[width=\textwidth]{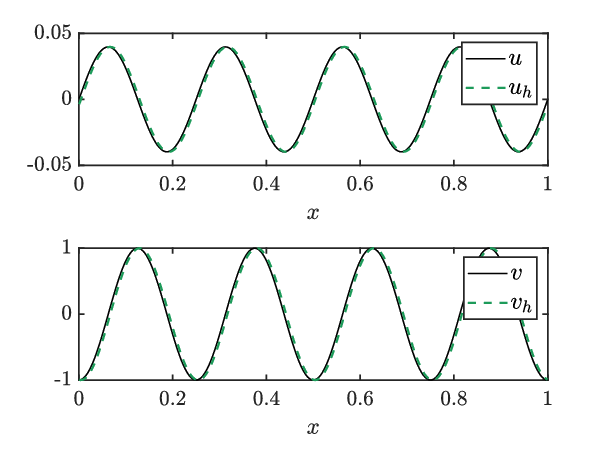}
    \caption{SDDG(2)-ERK(4,4)}
    \label{fig:travelling_SDDG-ERK}
  \end{subfigure}\\
  \begin{subfigure}[t]{0.48\textwidth}
    \centering
    \includegraphics[width=\textwidth]{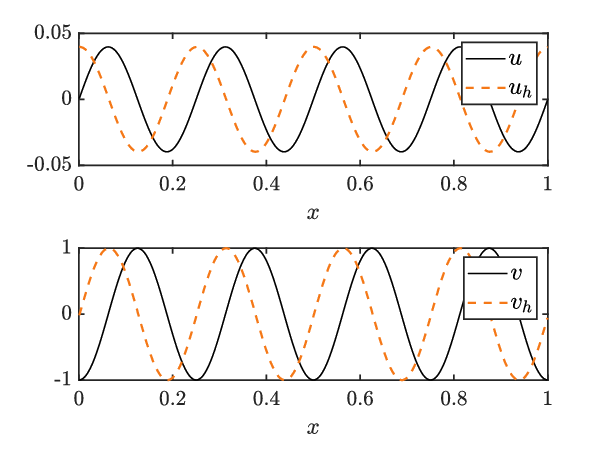}
    \caption{DDG(2)-ESPRK(4,4)}
    \label{fig:travelling_DDG-ESPRK}
  \end{subfigure}
  \hfill
  \begin{subfigure}[t]{0.48\textwidth}
    \centering
    \includegraphics[width=\textwidth]{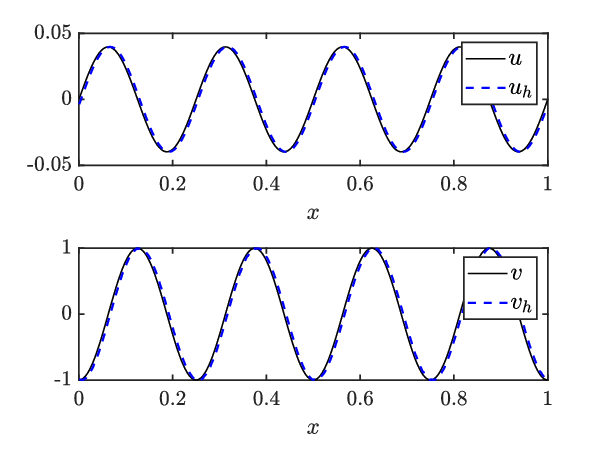}
    \caption{SDDG(2)-ESPRK(4,4)}
    \label{fig:travelling_SDDG-ESPRK}
  \end{subfigure}
  
  \caption{{Example \ref{example:travelling}}, $m=8,T_{\mathrm{f}}=500, N=64, \mathrm{CFL}=0.1$. Exact solutions and numerical approximations by explicit time integrators. $\text{err}_{L^\infty}(u_h)$: 0.080, 0.004, 0.080,0.004 and  $\text{err}_{L^\infty}(v_h)$ : 2.001, 0.109, 2.004,0.120.}
  \label{fig:travelling_explicit_compar_sol}
\end{figure}

\begin{figure}[t]
  \centering
 \begin{subfigure}[t]{0.48\textwidth}
    \centering
    \includegraphics[width=\textwidth]{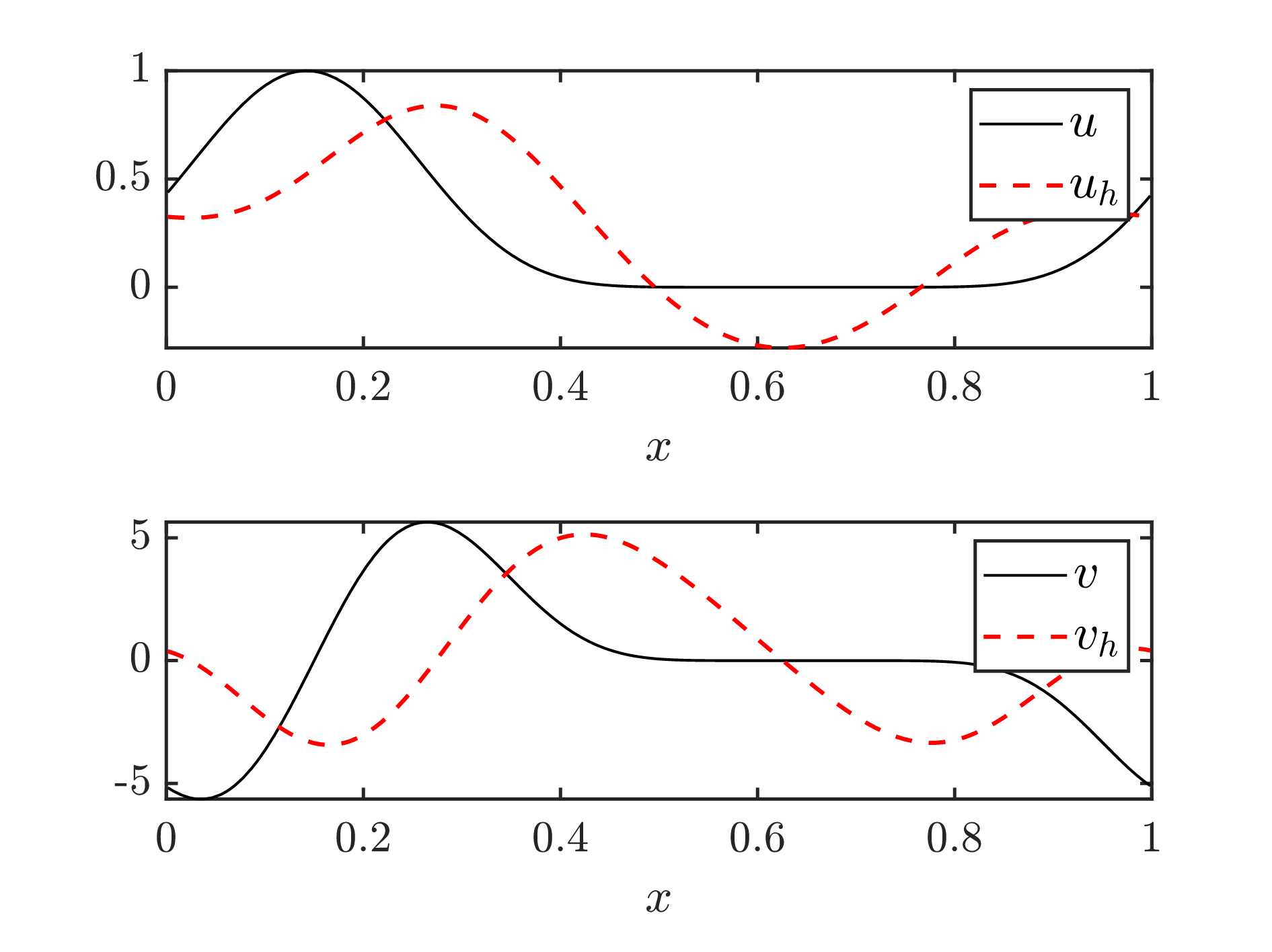}
    \caption{DDG(2)-DIRK(4,4)}
    \label{fig:pulse_DDG-DIRK}
  \end{subfigure}
  \hfill
  \begin{subfigure}[t]{0.48\textwidth}
    \centering
    \includegraphics[width=\textwidth]{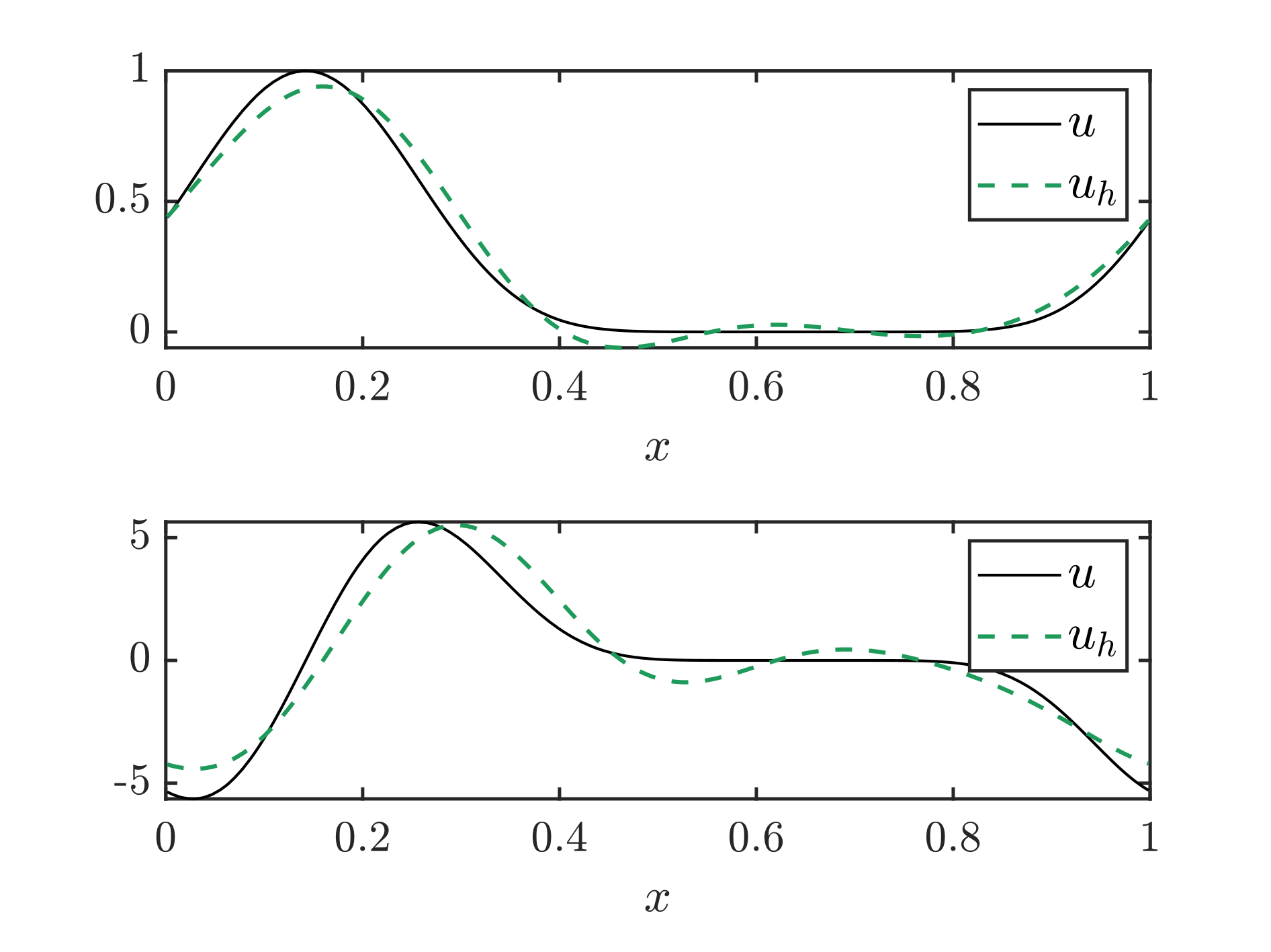}
    \caption{SDDG(2)-DIRK(4,4)}
    \label{fig:pulse_SDDG-DIRK}
  \end{subfigure}\\
  \begin{subfigure}[t]{0.48\textwidth}
    \centering
    \includegraphics[width=\textwidth]{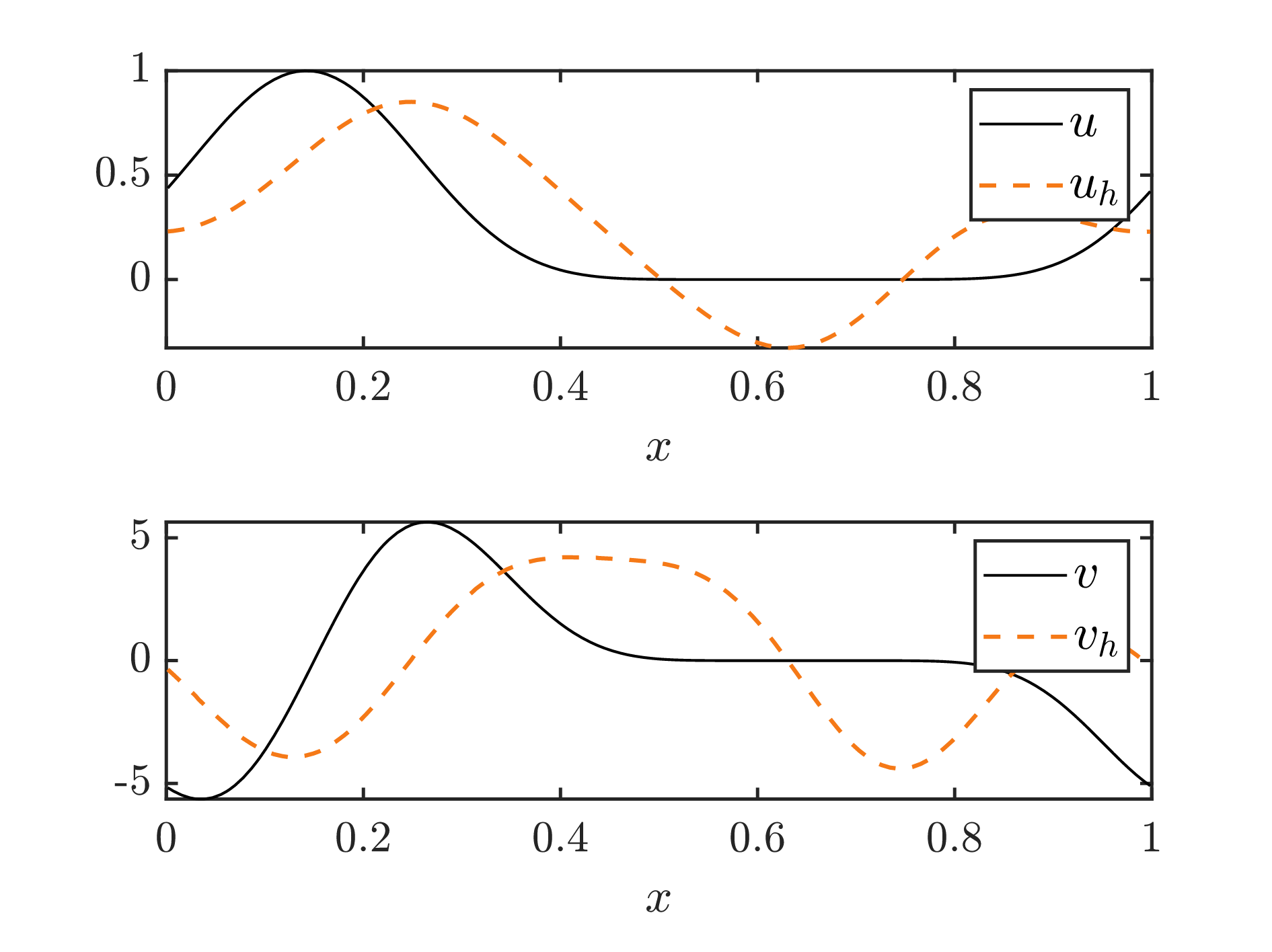}
    \caption{DDG(2)-SDIRK(4,4)}
    \label{fig:pulse_DDG-SDIRK}
  \end{subfigure}
  \hfill
  \begin{subfigure}[t]{0.48\textwidth}
    \centering
    \includegraphics[width=\textwidth]{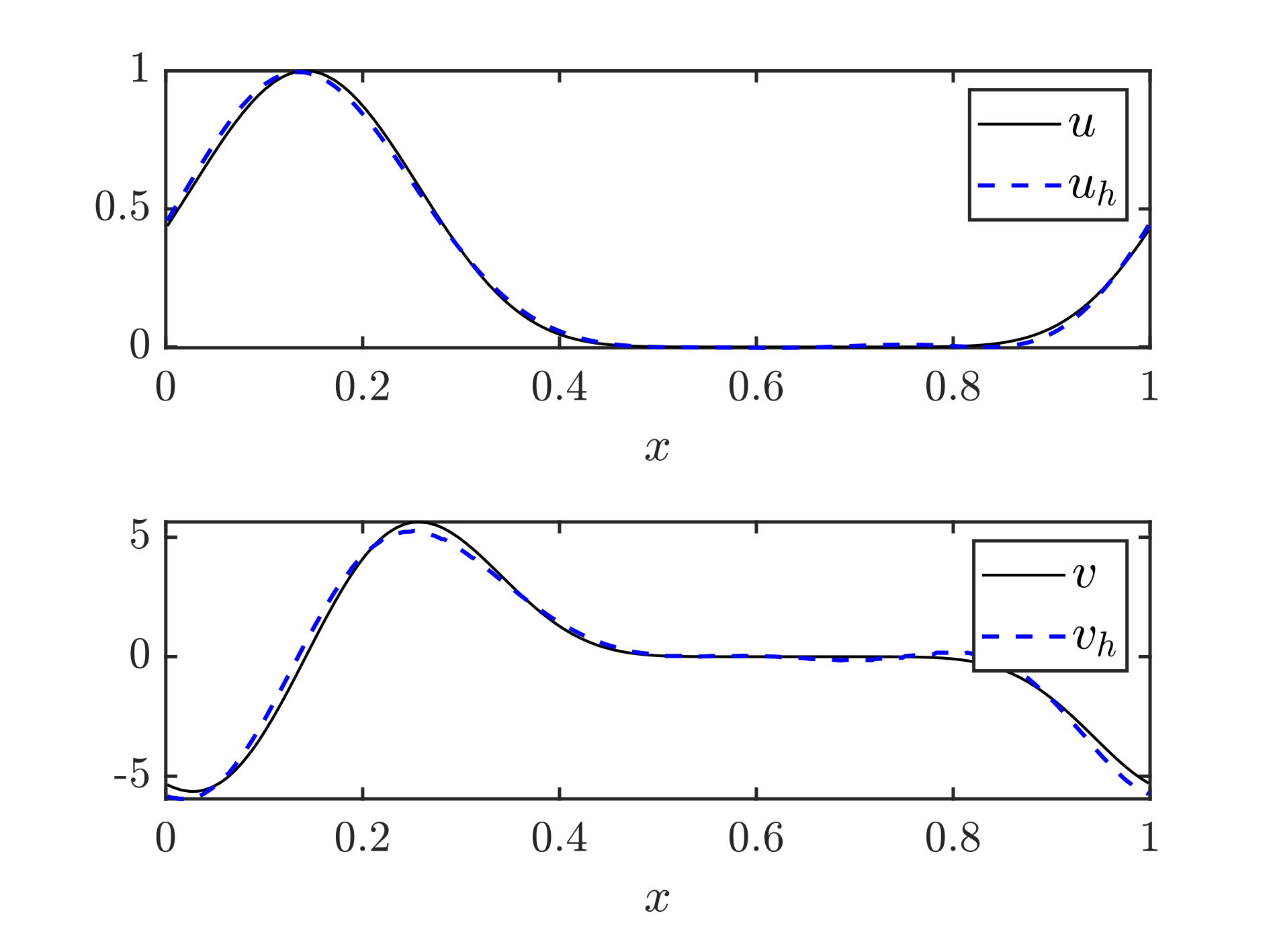}
    \caption{SDDG(2)-SDIRK(4,4)}
    \label{fig:pulse_SDDG-SDIRK}
  \end{subfigure}
\caption{Example~\ref{example:pulse}, $x_0=0.15$, $l=1$, $T_{\mathrm{f}}=200$, $N=64$, $\mathrm{CFL}=0.35$. Exact solutions and numerical approximations by implicit time integrators. $\mathrm{err}_{L^\infty}(u_h)$: 0.542, 0.100, 0.488, 0.033; $\mathrm{err}_{L^\infty}(v_h)$: 7.440, 1.703, 6.466, 0.675.}
  \label{fig:pulse_implicit_compar_sol}
\end{figure}
\subsubsection{Discrete energy evolution}
Figure~\ref{fig:travelling_energy} shows the discrete energy evolution for the travelling wave solution. The implicit symplectic Hamiltonian DDG method (SDDG-SDIRK) delivers the best energy conservation. For symplectic time integrators, the discrete energy stays bounded near the initial projection error, and SDDG method yields smaller energy errors than DDG, as it admits a discrete Hamiltonian structure. For non-symplectic implicit time integrators, artificial dissipation dominates, leading to rapid energy error growth.

For non-symplectic explicit time integrator, dissipation is mild. The SDDG energy error initially decreases before rising, suggesting that dissipation temporarily offsets the projection error before temporal errors dominate. This explains the comparable wave profile accuracy between symplectic and non-symplectic explicit schemes noted in the previous subsection.
\begin{figure}[t]
\centering
  \begin{subfigure}[t]{0.48\textwidth}
    \centering
    \includegraphics[width=\textwidth]{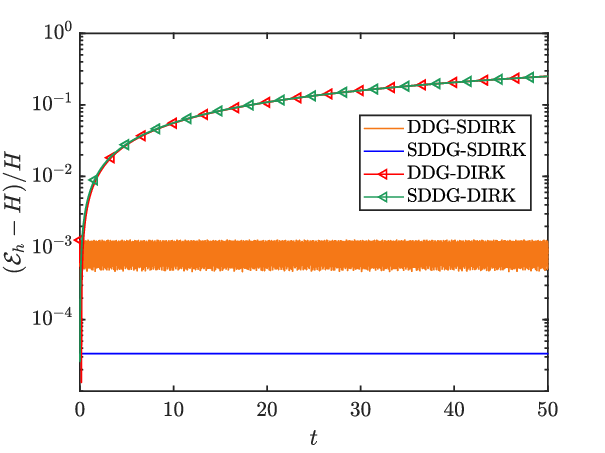}
    \caption{Implicit time integrators (CFL = 0.5)}
    \label{fig:travelling_energy_implicit}
  \end{subfigure}
  \hfill
  \begin{subfigure}[t]{0.48\textwidth}
    \centering
    \includegraphics[width=\textwidth]{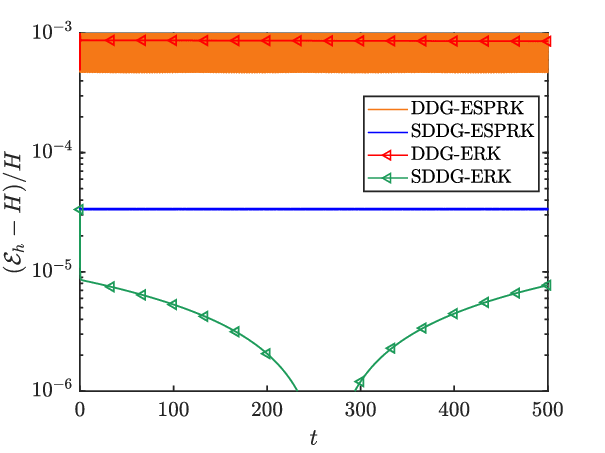}
    \caption{Explicit time integrators (CFL = 0.1)}
    \label{fig:travelling_energy_explicit}
  \end{subfigure}
\caption{Example~\ref{example:travelling}, $m=8$, $k=2$, $N=64$. 
Semi-log plot of relative discrete energy errors versus time for all combinations of fourth-order symplectic/non-symplectic 
time integrators and SDDG/DDG spatial discretizations ($k=2$).}
\label{fig:travelling_energy}
\end{figure}


\subsection{Semi-linear wave propagation}
In this subsection we consider the semi-linear SG (Sine-Gordon) equation $\partial_{tt} u - \Delta u + \sin(u) = 0,
    \quad x\in\Omega,\ t\in (0,T_{\mathrm{f}}]$. The one-dimensional kink and breather tests provide exact-solution benchmarks, while the two-dimensional Josephson transmission-line test examines qualitative soliton dynamics on a T-shaped domain.
\begin{example}[1D Periodic kink]\label{example:kink}
    We consider an infinite periodic train of kinks \cite{novkoski2023numerical}, for which an analytical solution is available and given by
\begin{equation*}
    u(x,t) = \pi + 2\mathrm{am}\!\left(
    \frac{x-ct-x_0}{\sqrt{\mu(1-c^2)}},\mu
    \right), \quad v(x,t) = -\frac{2c\mathrm{dn}}{\sqrt{\mu(1-c^2)}}, \quad \Omega=[0,L],
\end{equation*}
where $\mathrm{am}(\cdot, \mu)$ is the Jacobi amplitude function, for $\mu\in(0,1)$ and $\mathrm{dn}$ is the \emph{delta} amplitude of the elliptic function. The solution has a "periodic" boundary condition with a spatial period $L=2\sqrt{\mu(1-c^2)}\mathrm{K}(\mu)$. It is defined as the distance between two successive periodic kinks, where $\mathrm{K}$ is the complete elliptic function of the first kind. The periodicity is defined as follows:
\begin{align*}
    u(x+L,t)=u(x,t) \mod{2\pi},\quad  v(x+L,t)=v(x,t).
\end{align*}
\end{example}
\begin{example}[1D Breather]\label{example:breather}
    We consider a classical breather solution \cite{REICH2000473} of the SG equation and the exact solution takes the form
\begin{align*}
    u(x,t) = 4\tan^{-1}\left(
    \frac{A\cos(\omega t)}{\cosh\big(\sqrt{1-\omega^2}x\big)}
    \right),\quad
    v(x,t)
    =  -\frac{4A\omega\,\sin(\omega t)}
        {(1 + A^2 B^2)\cosh(\sqrt{1-\omega^2}x)}.
\end{align*}
where $
    A = \frac{\sqrt{1-\omega^2}}{\omega}, B = \frac{\cos(\omega t)}{\cosh(\sqrt{1-\omega^2}x)}$.
Although the breather solution is periodic on $(-\infty,\infty)$, it is exponentially localized in space.  
Therefore, by choosing a sufficiently large interval $\Omega= \left[-\frac{L}{2},\, \frac{L}{2}\right]$ and imposing periodic boundary conditions, the truncation error introduced at the boundaries is negligible. This makes the breather solution well suited as an exact solution for the present numerical tests.
\end{example}
\begin{example}[Josephson transmission line]\label{ex:josephson}
We consider the T-shaped domain $\Omega$ illustrated in Fig.~\ref{fig:Tdomain}, 
modeling a Josephson transmission line~\cite{gulevich2006flux,sanchez2024symplectic}. 
Homogeneous Neumann boundary conditions are imposed on 
$\Gamma$. 
\begin{figure}[htbp]
\centering
\includegraphics[width=0.5\textwidth]{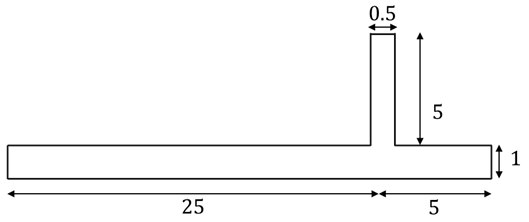}
\caption{T-shaped domain $\Omega$ for the Josephson transmission line.}
\label{fig:Tdomain}
\end{figure}
The initial conditions are
\begin{equation}\label{eq:josephson-ic}
   u_0(x,y) 
   = 4\arctan\!\left(\exp\!\left(\frac{x - x_0}{\sqrt{1 - c^2}}\right)\right),
   v_0(x,y) 
   = -\frac{2c}{\sqrt{1 - c^2}}\,
     \mathrm{sech}\!\left(\frac{x - x_0}{\sqrt{1 - c^2}}\right),
\end{equation}
which describe a one-dimensional kink-shaped soliton, initially 
localized at $x = x_0$ on the horizontal arm and propagating along 
the $x$-direction with velocity $c$. Below a critical velocity $c^* \approx 0.76$, the soliton fails to traverse the T-junction and remains confined to the horizontal branch 
(non-cloning regime); above $c^*$, it splits at the junction into two solitons propagating along the horizontal and vertical branches 
(cloning regime). 
We compute two simulations, with $c = 0.7$ (non-cloning) and $c = 0.8$ (cloning), to verify whether the numerical methods reproduce both 
regimes.
\end{example}
\subsubsection{Verification of the convergence properties}
The breather solution of the SG equation is adopted as the test case. 
Errors are evaluated at the final time of one full oscillation period. Table~\ref{tab:nonlinear-error-ESPRK} reports the $L^2$ and $L^\infty$ errors for $k=2,3,4$ under successive mesh refinements. For all polynomial degrees, $u_h$ achieves $(k+1)$-th order convergence and $v_h$ achieves $k$-th order convergence, consistent with the theoretical estimates in Section~\ref{sec:error-analysis}.
\begin{table}[h]
\setlength{\tabcolsep}{4pt} 
\footnotesize
\centering
\caption{Example \ref{example:breather}, $\omega=0.9, L=100, T_{\mathrm{f}}=2\pi/\omega$, History of convergence of the numerical approximations by the schemes SDDG($k$)-ESPRK($k+2$).}
\label{tab:nonlinear-error-ESPRK}

\begin{tabular}{c |c | cc | cc | cc | cc }
\hline
$k$ & $N$
& \multicolumn{2}{c|}{$u_h$}
& \multicolumn{2}{c|}{$v_h$}
& \multicolumn{2}{c|}{$u_h$}
& \multicolumn{2}{c}{$v_h$}
\\
 & 
 & $\mathrm{err}_{L^2}$ & Order 
 & $\mathrm{err}_{L^2}$ & Order
 & $\mathrm{err}_{L^\infty}$ & Order 
 & $\mathrm{err}_{L^\infty}$ & Order
\\
\hline											
&$	128	$&$	1.12\mathrm{e}-03	$&$	-	$&$	1.17\mathrm{e}-02	$&$	-	$&$	8.15\mathrm{e}-04	$&$	-	$&$	1.12\mathrm{e}-02	$&$	-	$\\
$2$&$	256	$&$	1.39\mathrm{e}-04	$&$	3.01	$&$	3.09\mathrm{e}-03	$&$	1.93	$&$	1.02\mathrm{e}-04	$&$	3.00	$&$	2.75\mathrm{e}-03	$&$	2.03	$\\
&$	512	$&$	1.78\mathrm{e}-05	$&$	2.97	$&$	7.83\mathrm{e}-04	$&$	1.98	$&$	1.30\mathrm{e}-05	$&$	2.98	$&$	6.82\mathrm{e}-04	$&$	2.01	$\\
&$	1024	$&$	2.30\mathrm{e}-06	$&$	2.95	$&$	1.97\mathrm{e}-04	$&$	1.99	$&$	1.66\mathrm{e}-06	$&$	2.96	$&$	1.71\mathrm{e}-04	$&$	1.99	$\\
\hline																		
&$	128	$&$	5.86\mathrm{e}-05	$&$	-	$&$	5.70\mathrm{e}-04	$&$	-	$&$	5.13\mathrm{e}-05	$&$ -	$&$	7.47\mathrm{e}-04	$&$	-	$\\
$3$&$	256	$&$	3.97\mathrm{e}-06	$&$	3.88	$&$	4.97\mathrm{e}-05	$&$	3.52	$&$	3.90\mathrm{e}-06	$&$	3.72	$&$	6.29\mathrm{e}-05	$&$	3.57	$\\
&$	512	$&$	2.53\mathrm{e}-07	$&$	3.98	$&$	5.15\mathrm{e}-06	$&$	3.27	$&$	2.56\mathrm{e}-07	$&$	3.93	$&$	6.27\mathrm{e}-06	$&$	3.33	$\\
&$	1024	$&$	1.58\mathrm{e}-08	$&$	3.99	$&$	6.04\mathrm{e}-07	$&$	3.09	$&$	1.62\mathrm{e}-08	$&$	3.98	$&$	7.13\mathrm{e}-07	$&$	3.14	$\\
\hline																		
&$	64	$&$	6.99\mathrm{e}-05	$&$	-	$&$	2.27\mathrm{e}-03	$&$	- $&$	6.22\mathrm{e}-05	$&$-	$&$	2.19\mathrm{e}-03	$&$	-$\\
$4$&$	128	$&$	1.90\mathrm{e}-06	$&$	5.20	$&$	1.25\mathrm{e}-04	$&$	4.18	$&$	2.17\mathrm{e}-06	$&$	4.84	$&$	1.49\mathrm{e}-04	$&$	3.88	$\\
&$	256	$&$	6.09\mathrm{e}-08	$&$	4.96	$&$	7.41\mathrm{e}-06	$&$	4.08	$&$	7.08\mathrm{e}-08	$&$	4.94	$&$	9.15\mathrm{e}-06	$&$	4.03	$\\
&$	512	$&$	2.05\mathrm{e}-09	$&$	4.89	$&$	4.76\mathrm{e}-07	$&$	3.96	$&$	2.21\mathrm{e}-09	$&$	5.00	$&$	5.90\mathrm{e}-07	$&$	3.95	$\\
\hline
\end{tabular}
\end{table}
\subsubsection{Solution profile}
For the kink solution, the numerical results are qualitatively similar to those of the linear case: SDDG-SDIRK delivers the best performance, SDIRK preserves wave amplitudes while DIRK introduces significant error, and SDDG reduces dispersion errors compared to DDG. Representative results are shown in Figure~\ref{fig:kink_SDDG_SDIRK} versus \ref{fig:kink_DDG_SDIRK} and \ref{fig:kink_DDG_DIRK}.

For the breather solution, the symplectic Hamiltonian DDG method (SDDG-SDIRK and SDDG-ESPRK) achieves the best solution profiles under both implicit and explicit integration. Under implicit time integration, non-symplectic DIRK leads to rapid profile degradation regardless of spatial scheme (Figure~\ref{fig:breather_DDG_DIRK} and~\ref{fig:breather_SDDG_DIRK}), while SDDG method consistently outperforms DDG method under SDIRK (Figure~\ref{fig:breather_DDG_SDIRK} versus~\ref{fig:breather_SDDG_SDIRK}). Under explicit integration, ESPRK method reduces $\mathrm{err}_{L^\infty}(u_h)$ and $\mathrm{err}_{L^\infty}(v_h)$ by approximately 73\% compared to ERK method (Figure~\ref{fig:breather_SDDG_ERK} versus~\ref{fig:breather_SDDG_ESPRK}), and DDG method suffers severe profile distortion with displacement errors reaching two wave amplitudes (Figure~\ref{fig:breather_DDG_ESPRK} versus~\ref{fig:breather_SDDG_ESPRK}).
\begin{figure}[t]
  \centering  
    \begin{subfigure}[t]{0.48\textwidth}
    \centering
    \includegraphics[width=\textwidth]{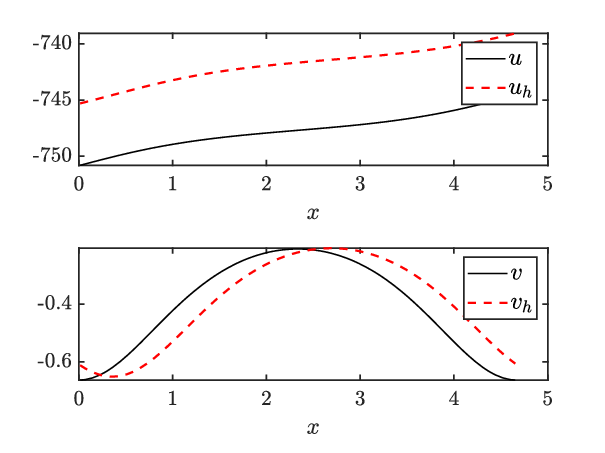}
    \caption{DDG(2)-DIRK(4,4)}
    \label{fig:kink_DDG_DIRK}
  \end{subfigure}
  \hfill
  \begin{subfigure}[t]{0.48\textwidth}
    \centering
    \includegraphics[width=\textwidth]{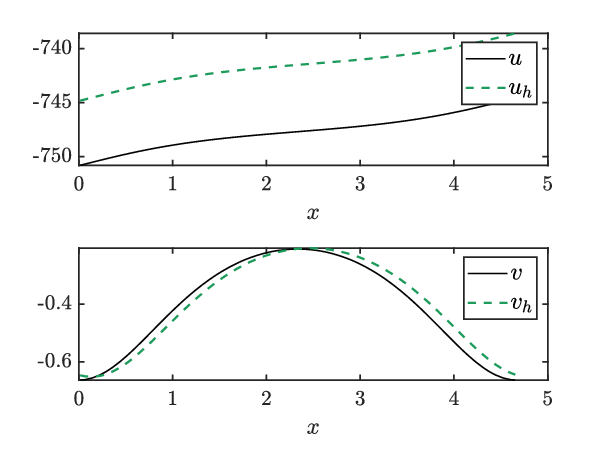}
    \caption{SDDG(2)-DIRK(4,4)}
    \label{fig:kink_SDDG_DIRK}
  \end{subfigure}
\\
  \begin{subfigure}[t]{0.48\textwidth}
    \centering
    \includegraphics[width=\textwidth]{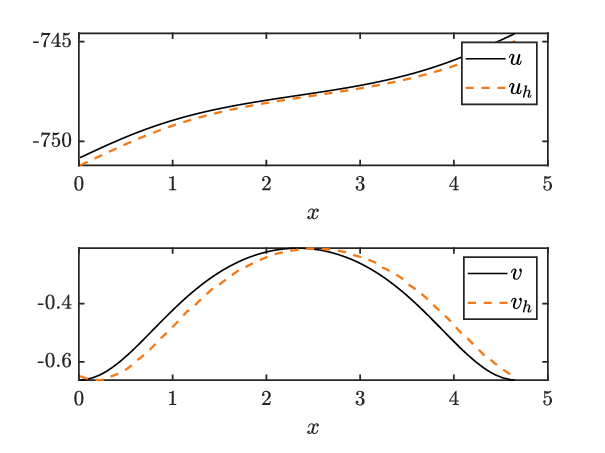}
    \caption{DDG(2)-SDIRK(6,4)}
    \label{fig:kink_DDG_SDIRK}
  \end{subfigure}
  \hfill
  \begin{subfigure}[t]{0.48\textwidth}
    \centering
    \includegraphics[width=\textwidth]{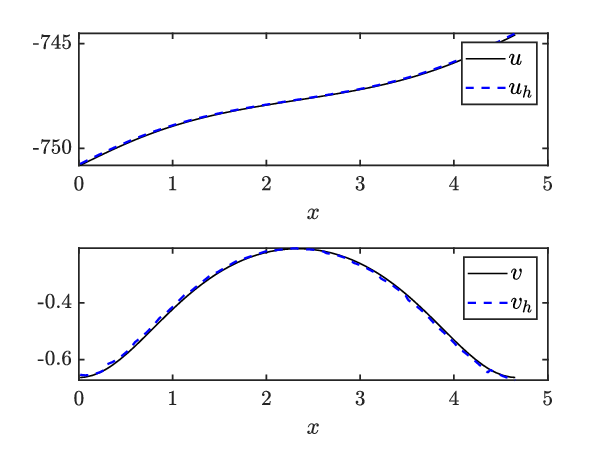}
    \caption{SDDG(2)-SDIRK(6,4)}
    \label{fig:kink_SDDG_SDIRK}
  \end{subfigure}
  \caption{Example~\ref{example:kink}, $c=0.3$, $\mu=0.9$, $T_{\mathrm{f}}=1866.5$, $N=16$, $\mathrm{CFL}=1$. Exact solutions and numerical approximations by implicit time integrators. $\mathrm{err}_{L^\infty}(u_h)$: 6.031, 6.189, 0.389, 0.069; $\mathrm{err}_{L^\infty}(v_h)$: 0.455, 0.455, 0.064, 0.027.}
\label{fig:kink_implicit_compar}
\end{figure}
\begin{figure}[t]
  \centering
  \begin{subfigure}[t]{0.48\textwidth}
    \centering
    \includegraphics[width=\textwidth]{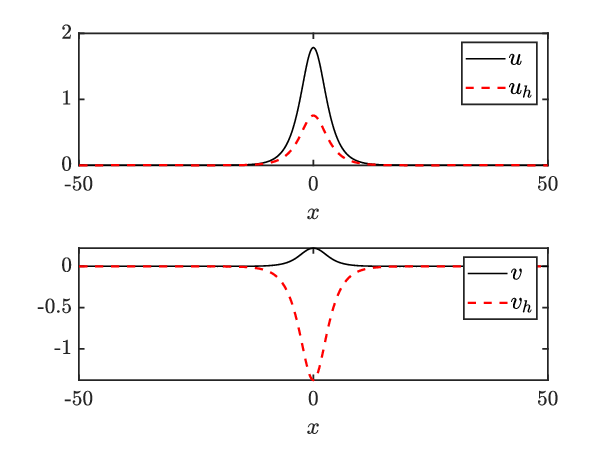}
    \caption{DDG(2)-DIRK(4,4)}
    \label{fig:breather_DDG_DIRK}
  \end{subfigure}
  \hfill
  \begin{subfigure}[t]{0.48\textwidth}
    \centering
    \includegraphics[width=\textwidth]{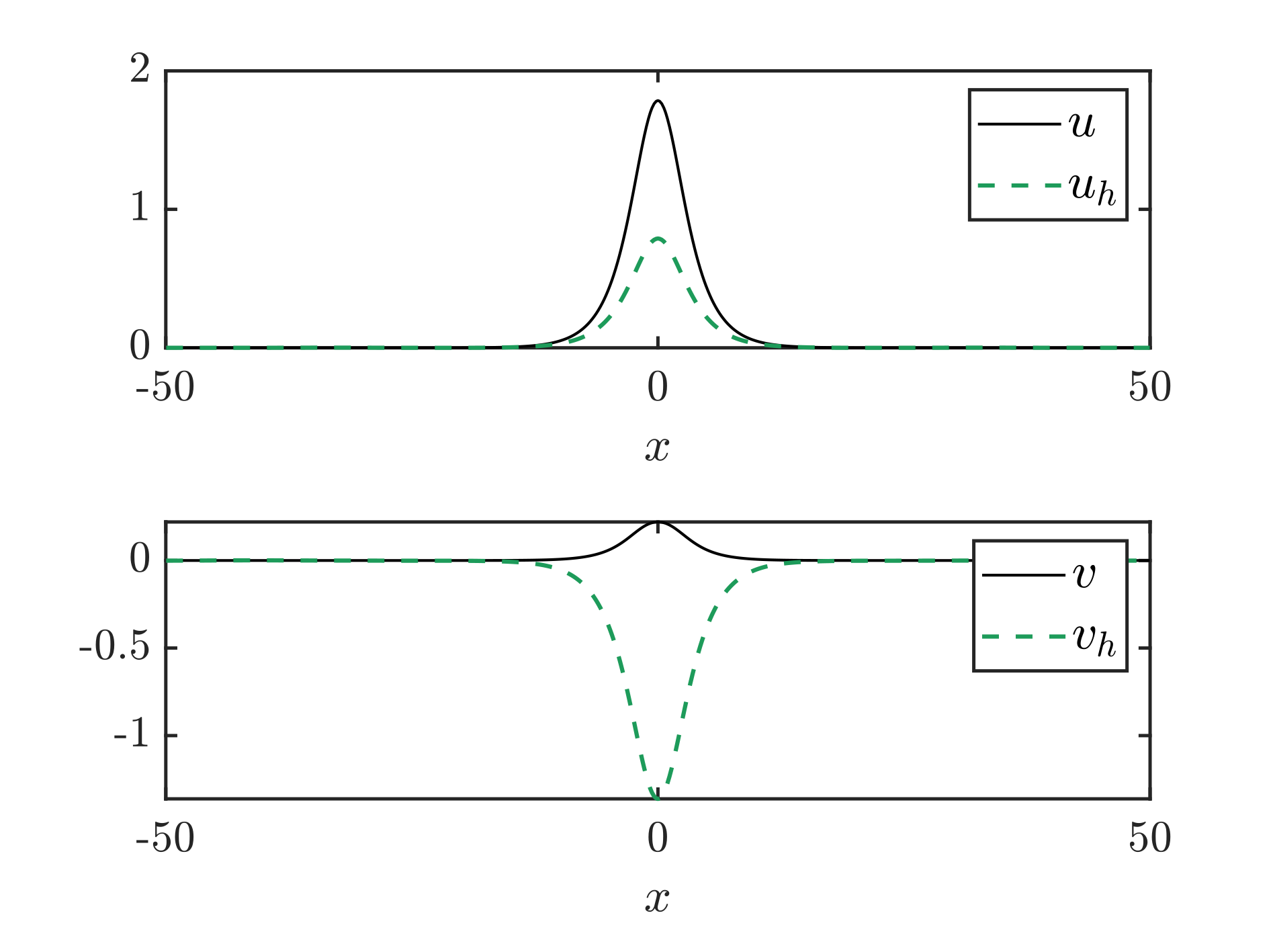}
    \caption{SDDG(2)-DIRK(4,4)}
    \label{fig:breather_SDDG_DIRK}
  \end{subfigure}
\\
  \begin{subfigure}[t]{0.48\textwidth}
    \centering
    \includegraphics[width=\textwidth]{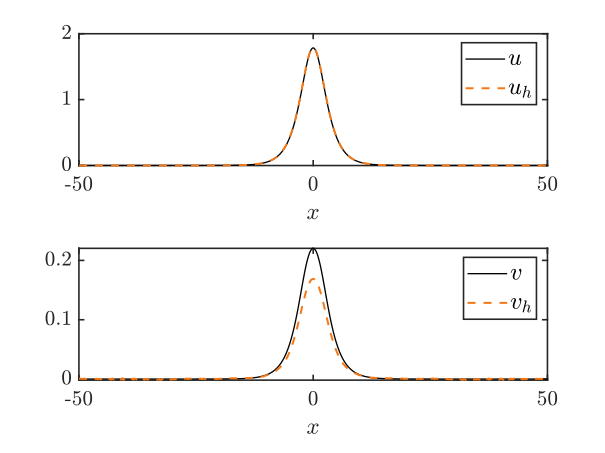}
    \caption{DDG(2)-SDIRK(6,4)}
    \label{fig:breather_DDG_SDIRK}
  \end{subfigure}
  \hfill
  \begin{subfigure}[t]{0.48\textwidth}
    \centering
    \includegraphics[width=\textwidth]{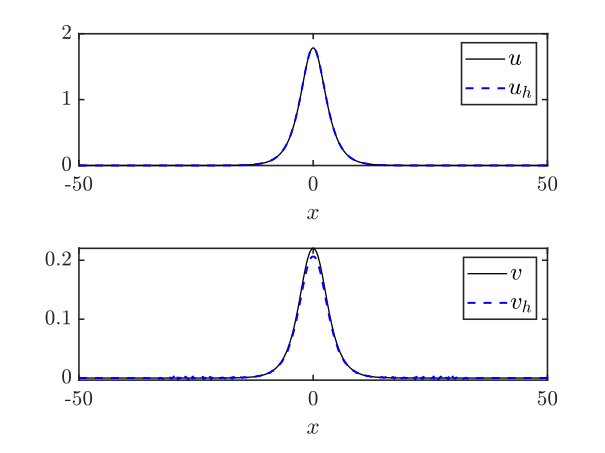}
    \caption{SDDG(2)-SDIRK(6,4)}
    \label{fig:breather_SDDG_SDIRK}
  \end{subfigure}
\caption{Example~\ref{example:breather}, $\omega=0.9$, $L=100$, $T_{\mathrm{f}}=139.6$, 
$N=128$, $\mathrm{CFL}=0.35$. Exact solutions and numerical approximations 
by implicit time integrators. 
$\mathrm{err}_{L^\infty}(u_h)$: 2.064, 2.036, 0.069, 0.018; 
$\mathrm{err}_{L^\infty}(v_h)$: 1.731, 1.703, 0.052, 0.014.}
  \label{fig:breather_implicit_compar}
  
\end{figure}
\begin{figure}[t]
  \centering
  \begin{subfigure}[t]{0.48\textwidth}
    \centering
    \includegraphics[width=\textwidth]{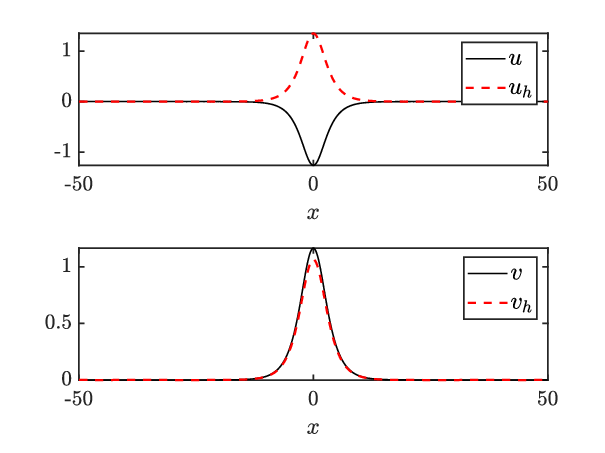}
    \caption{DDG(2)-ERK(4,4)}
    \label{fig:breather_DDG_ERK}
  \end{subfigure}
  \hfill
  \begin{subfigure}[t]{0.48\textwidth}
    \centering
    \includegraphics[width=\textwidth]{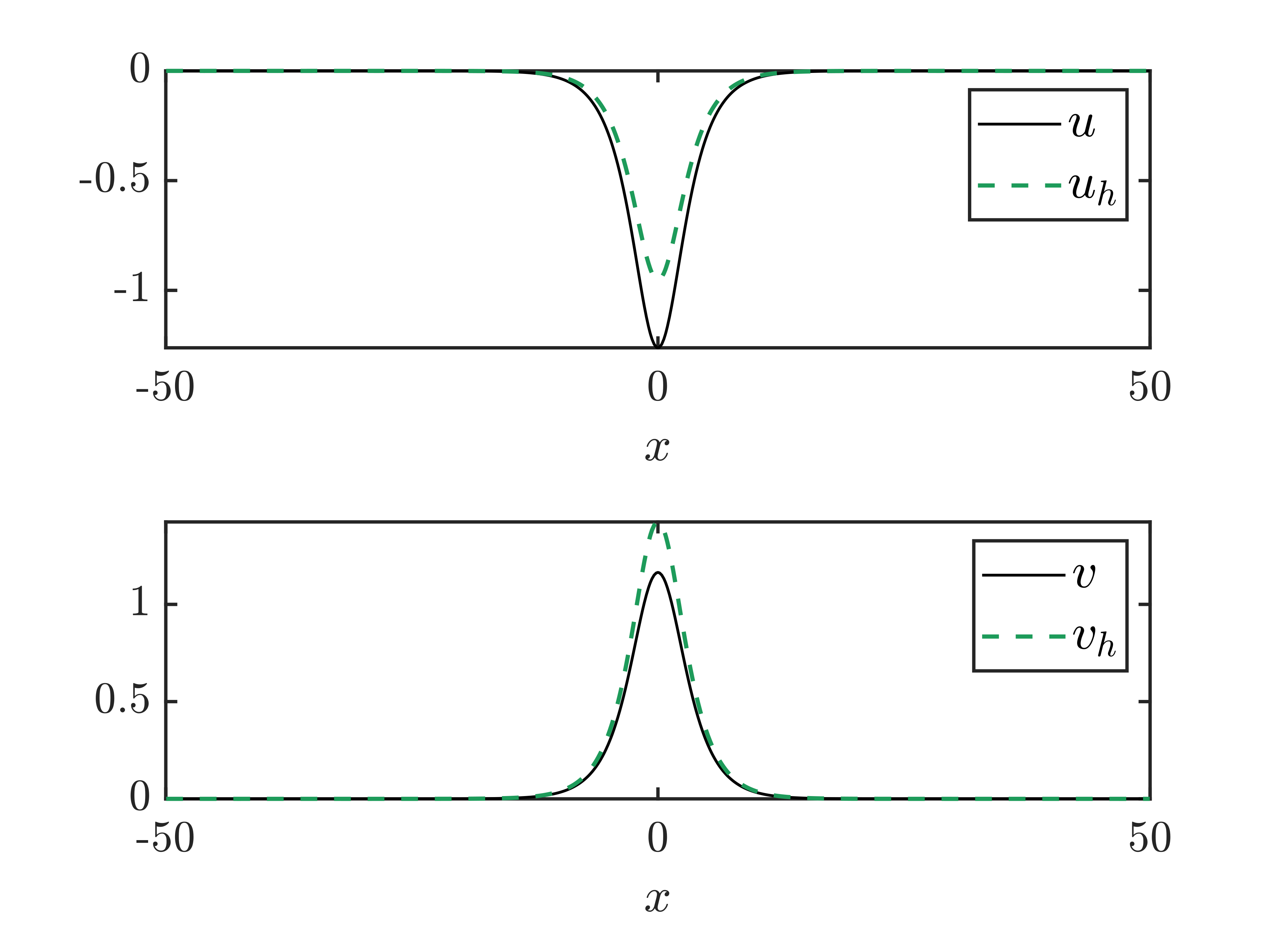}
    \caption{SDDG(2)-ERK(4,4)}
    \label{fig:breather_SDDG_ERK}
  \end{subfigure}
\\
  \begin{subfigure}[t]{0.48\textwidth}
    \centering
    \includegraphics[width=\textwidth]{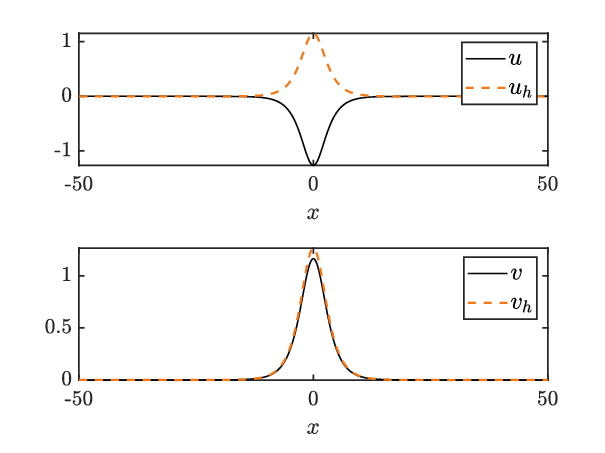}
    \caption{DDG(2)-ESPRK(6,4)}
    \label{fig:breather_DDG_ESPRK}
  \end{subfigure}
  \hfill
  \begin{subfigure}[t]{0.48\textwidth}
    \centering
    \includegraphics[width=\textwidth]{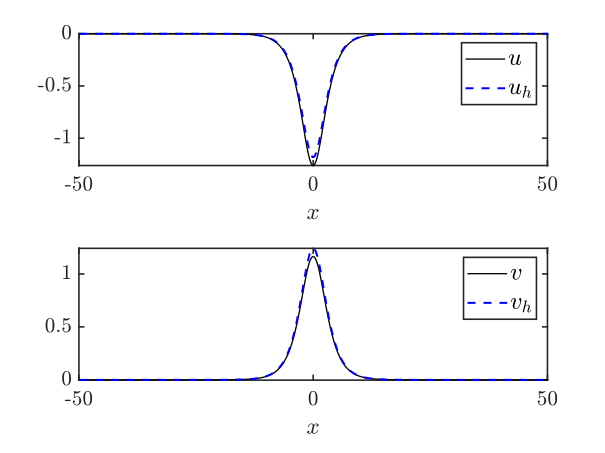}
    \caption{SDDG(2)-ESPRK(6,4)}
    \label{fig:breather_SDDG_ESPRK}
  \end{subfigure}
\caption{Example~\ref{example:breather}, $\omega=0.9$, $L=100$, $T_{\mathrm{f}}=6978.8$, $N=128$, $\mathrm{CFL}=0.1$. Exact solutions and numerical approximations by explicit time integrators. $\mathrm{err}_{L^\infty}(u_h)$: 2.61, 0.41, 2.41, 0.11; $\mathrm{err}_{L^\infty}(v_h)$: 2.18, 0.30, 1.97, 0.08.}
  \label{fig:breather_explicit_compar}
\end{figure}

For the two-dimensional Josephson transmission line, the SDDG method also
captures the expected qualitative soliton dynamics.  Figure~\ref{fig:josephson_snapshots}
shows snapshots of \(u_h\) at \(t=12,18,24,30\).  In the non-cloning case
\(c=0.7<c^*\), the incoming soliton is reflected by the T-junction and remains
confined to the horizontal branch.  In the cloning case \(c=0.8>c^*\), the
soliton passes through the junction and splits into two outgoing solitons along
the horizontal and vertical branches.  These results demonstrate that the
symplectic Hamiltonian DDG method reproduces the reflection and cloning
phenomena of the sine-Gordon soliton on the T-shaped domain.
\begin{figure}[t]
\centering
  \begin{subfigure}[t]{0.48\textwidth}
    \centering
    \includegraphics[width=\textwidth]{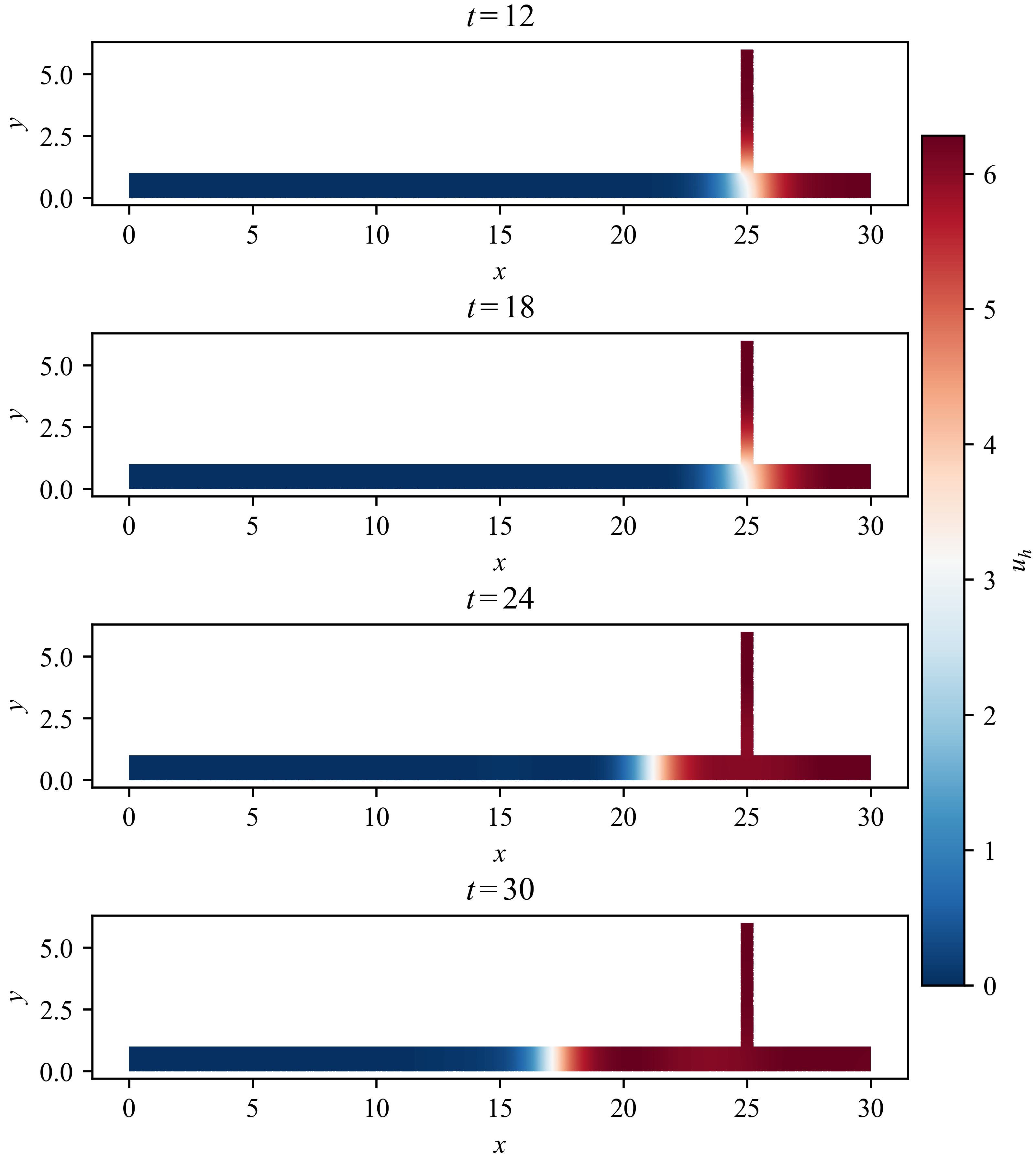}
    \caption{\(c=0.7\): reflection}
    \label{fig:josephson_c07_snapshots}
  \end{subfigure}
  \hfill
  \begin{subfigure}[t]{0.48\textwidth}
    \centering
    \includegraphics[width=\textwidth]{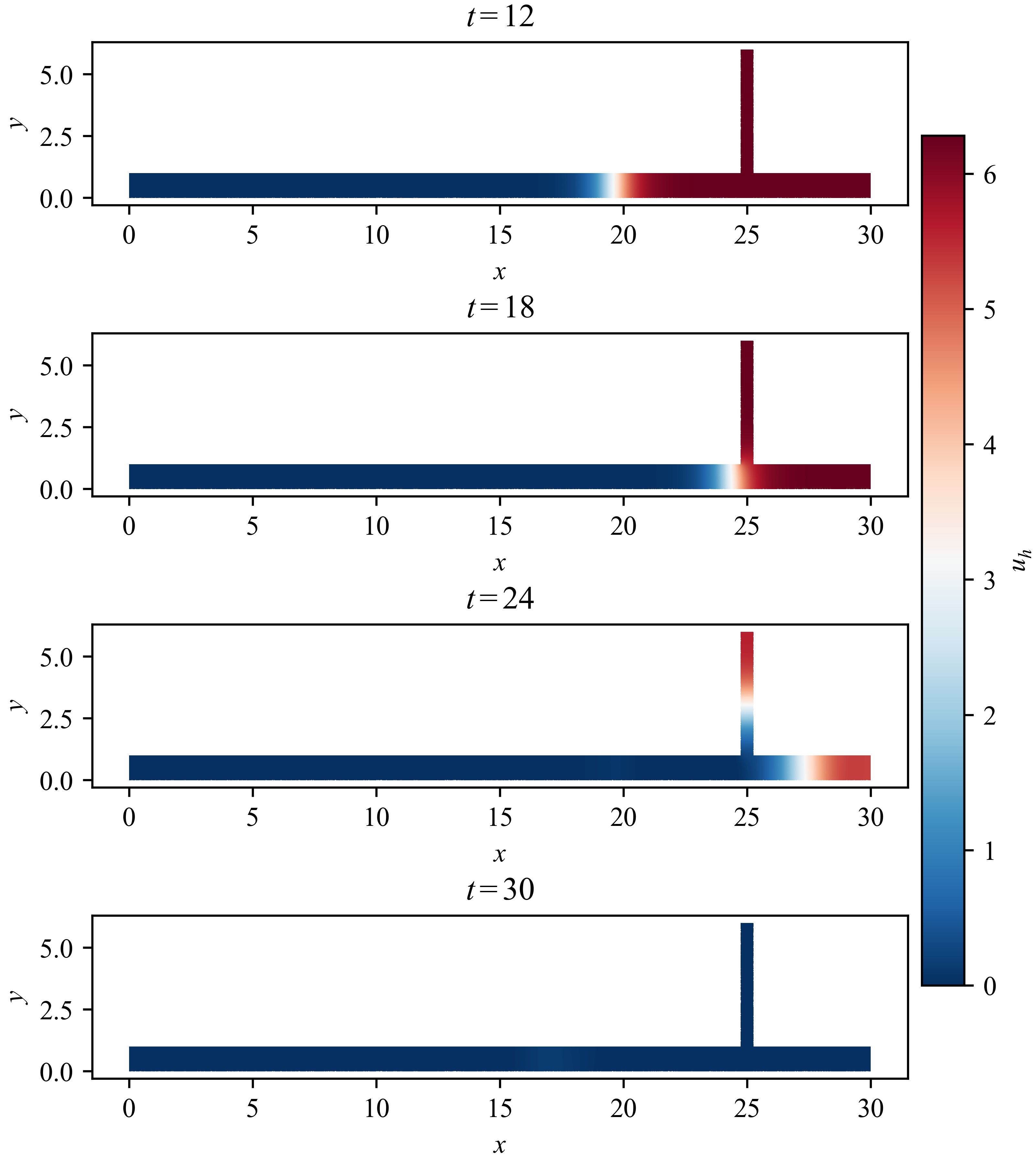}
    \caption{\(c=0.8\): cloning}
    \label{fig:josephson_c08_snapshots}
  \end{subfigure}
\caption{Example~\ref{ex:josephson}. Snapshots of the numerical displacement
\(u_h\) on the T-shaped Josephson transmission line at
\(t=12,18,24,30\).  The SDDG(2)-SDIRK(6,4) method with \(h=0.1,\mathrm{CFL}=0.35\) recovers the physically expected reflection for \(c=0.7\) and soliton cloning for \(c=0.8\).}
\label{fig:josephson_snapshots}
\end{figure}
\subsubsection{Discrete energy evolution}
We first examine the discrete energy evolution of the breather solution. The symplectic Hamiltonian DDG method (SDDG-SDIRK and SDDG-ESPRK) achieves the best energy conservation under both implicit and explicit integration. Under implicit integration, symplectic integrators maintain the energy within a small bounded oscillation, while non-symplectic DIRK introduces severe dissipation that causes the energy error to grow (Figure~\ref{fig:breather_implicit}). Under explicit integration, non-symplectic ERK exhibits cumulative energy drift that grows steadily in time, while symplectic ESPRK kepng the energy error stable (Figure~\ref{fig:breather_explicit}). The kink solution, whose energy behaviour is similar to the linear case, is not shown separately.
\begin{figure}[t]
\centering
  \begin{subfigure}[t]{0.48\textwidth}
    \centering
    \includegraphics[width=\textwidth]{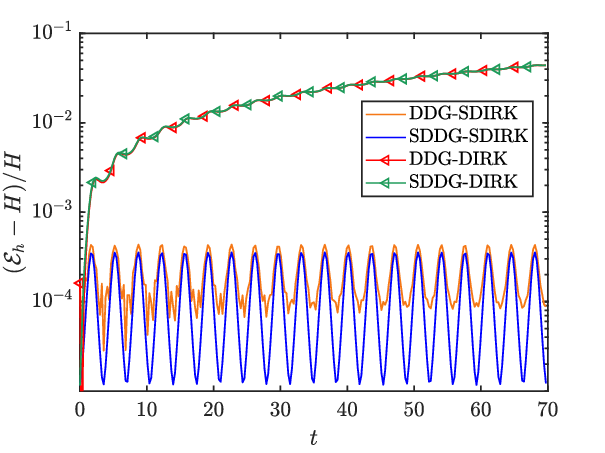}
    \caption{Implicit time integrators (CFL = 0.35)}
    \label{fig:breather_implicit}
  \end{subfigure}
  \hfill
  \begin{subfigure}[t]{0.48\textwidth}
    \centering
    \includegraphics[width=\textwidth]{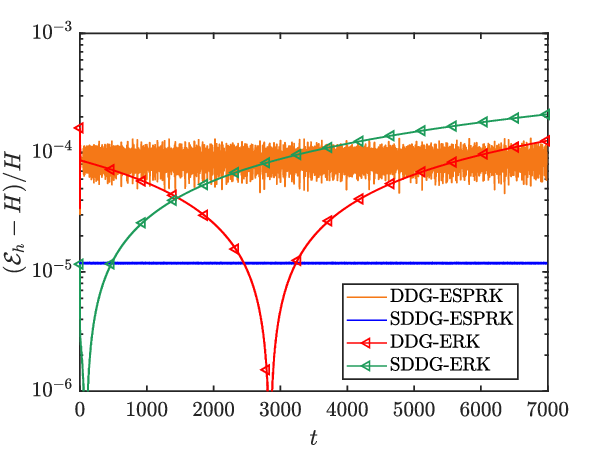}
    \caption{Explicit time integrators (CFL = 0.1) }
    \label{fig:breather_explicit}
  \end{subfigure}
\caption{Example~\ref{example:breather}, $\omega=0.9$, $L=100$, $k=2$, $N=128$. 
Semi-log plot of relative discrete energy errors versus time for all combinations of fourth-order symplectic/non-symplectic time 
integrators and SDDG/DDG spatial discretizations ($k=2$).}
\label{fig:breather_energy}
\end{figure}

The same conclusion is observed for the two-dimensional Josephson test with
\(c=0.8\), shown in Figure~\ref{fig:josephson_energy}.  Among all combinations
of spatial discretization and time integrator, the symplectic Hamiltonian DDG
schemes produce the smallest relative energy error throughout the computation.
In particular, SDDG combined with the symplectic time integrator remains below
the corresponding DDG and non-symplectic variants, confirming that the
Hamiltonian-compatible spatial discretization and symplectic time integration
are both important for robust long-time energy behaviour in the nonlinear
two-dimensional soliton dynamics.
\begin{figure}[t]
\centering
  \begin{subfigure}[t]{0.48\textwidth}
  \raggedright
    \includegraphics[width=0.933\textwidth]{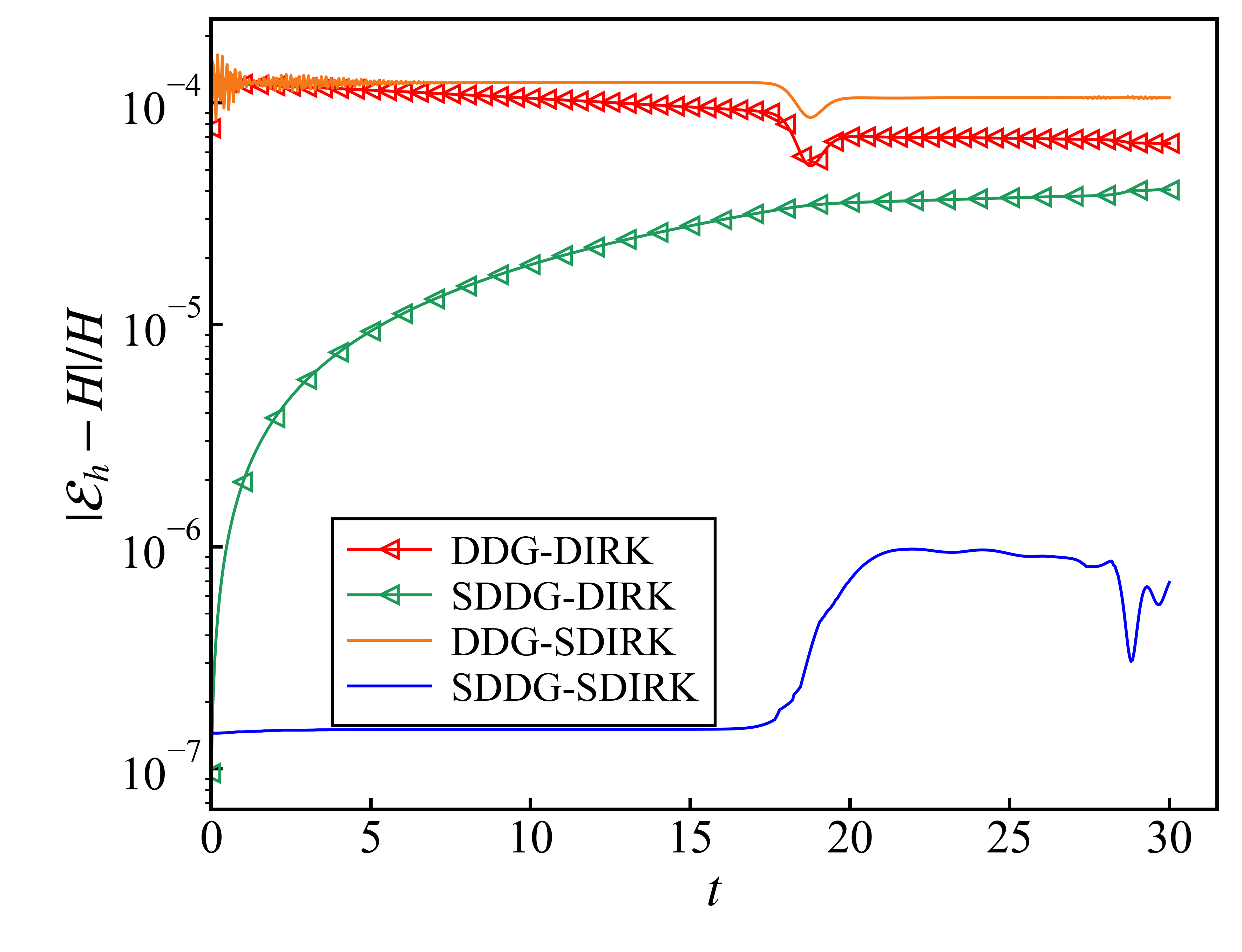}
    \caption{Implicit time integrators($\mathrm{CFL=0.35}$)}
    \label{fig:josephson_energy_implicit}
  \end{subfigure}
  \hfill
  \begin{subfigure}[t]{0.48\textwidth}
  \raggedright
    \includegraphics[width=0.933\textwidth]{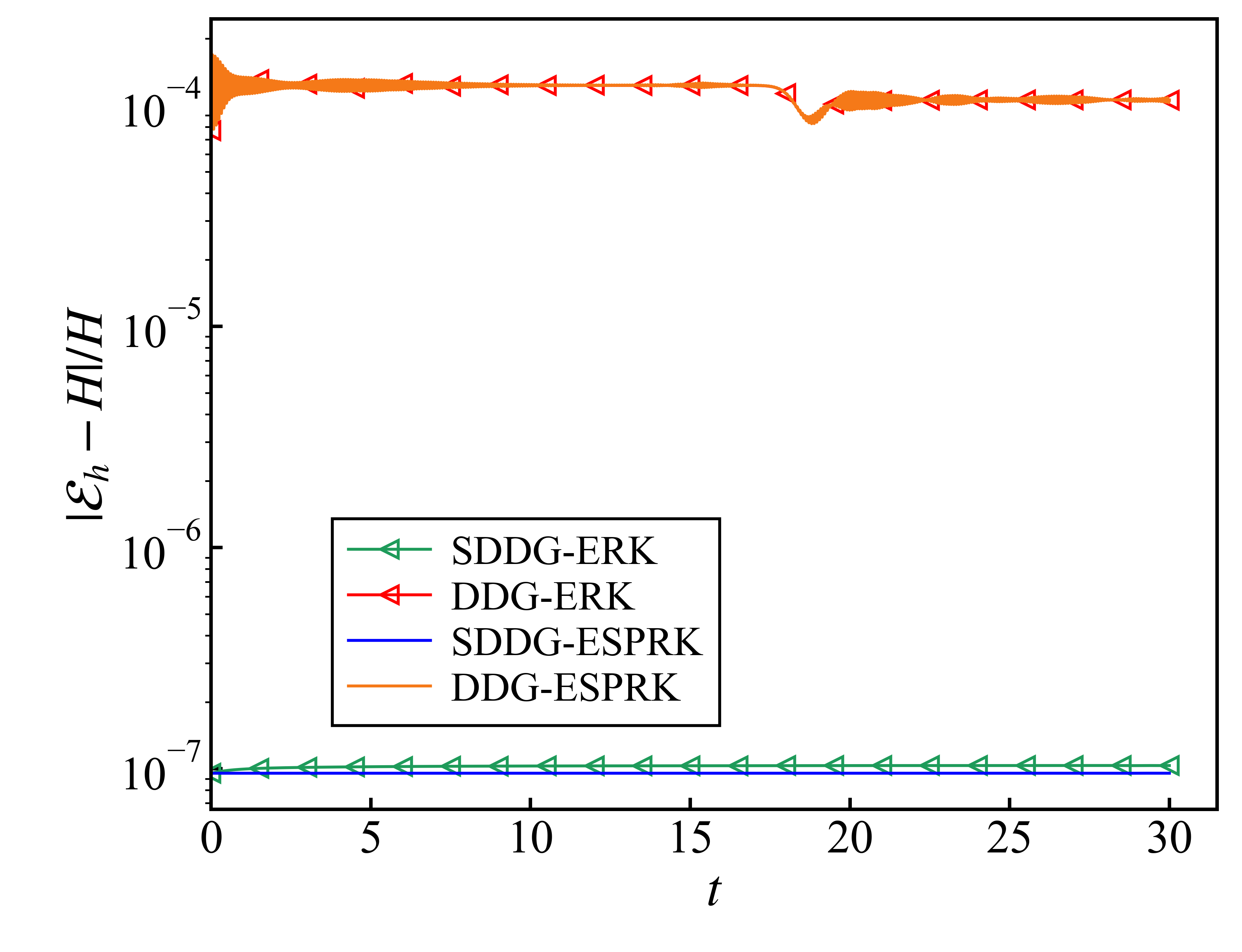}
    \caption{Explicit time integrators($\mathrm{CFL=0.1}$)}
    \label{fig:josephson_energy_explicit}
  \end{subfigure}
\caption{Example~\ref{ex:josephson}, \(c=0.8,h=0.1\). Semi-log plot of relative discrete energy errors versus time for all combinations of fourth-order symplectic/non-symplectic time integrators and SDDG/DDG spatial discretizations ($k=2$).}
\label{fig:josephson_energy}
\end{figure}
\section{Conclusion}
\label{sec:conclusion}
We investigated auxiliary-variable-free DG methods for semi-linear wave equations and established that the symmetry of flux bilinear form is equivalent to the existence of a discrete Hamiltonian. This result identifies SDDG method as a natural candidate for spatial discretization, and combining it with symplectic time integrators yields the symplectic Hamiltonian DDG method studied in this work.
Error analysis for the SDDG method yields optimal convergence rate for displacement and suboptimal convergence rate for velocity, together with verifiable parameter constraints reusable when extending the method to new problem classes.

Numerical experiments on linear and semilinear wave equations support the following conclusions. SDDG method consistently reduces dispersion errors compared to DDG method, independently of the time integrator, confirming that dispersion control should be addressed at the spatial level. Even without a discrete Hamiltonian, symplectic integrators suppress long-term energy dissipation and therefore allow a larger CFL number, though the energy may oscillate around a biased level. Implicit symplectic schemes show clear advantages over dissipative implicit methods. For explicit integration, the advantage of symplectic time integrator is modest for linear and weakly nonlinear problems but becomes essential for strongly nonlinear phenomena such as the breather solution and the two-dimensional Josephson soliton test. In the Josephson transmission-line example, the method reproduces the expected reflection and cloning regimes of sine-Gordon solitons on the T-shaped domain. The symplectic Hamiltonian DDG method achieves the best energy conservation across all test cases.

Several directions appear promising for future work. The symmetry-based Hamiltonian structure analysis of spatial semi-discretization developed here applies directly to other Hamiltonian PDEs wherever an auxiliary-variable-free DG discretization is used. Extension to multi-dimensional problems and unstructured meshes is a natural next step, where the flux symmetry criterion remains applicable. Finally, similar symmetry-preserving principles may be incorporated into neural-network-based PDE solvers through symmetry-preserving architectures.

\bibliographystyle{amsplain}

\bibliography{references}

\end{document}